\numberwithin{equation}{section}
\newtheorem{thm}{Theorem}[section]
\newtheorem{prop}[thm]{Proposition}
\newtheorem{lem}[thm]{Lemma}
\theoremstyle{definition}
\newtheorem{defn}[thm]{Definition}
\theoremstyle{remark}
\newtheorem{rem}[thm]{Remark}
\newcommand{\al}{\alpha}
\newcommand{\be}{\beta}
\newcommand{\ga}{\gamma}
\newcommand{\la}{\lambda}
\newcommand{\om}{\omega}
\renewcommand{\th}{\theta}
\newcommand{\ze}{\zeta}
\newcommand{\p}{\partial}
\newcommand{\Sc}[1]{\mathcal{#1}}
\newcommand{\Bo}[1]{\mathbb{#1}}
\newcommand{\R}{\Bo{R}}
\newcommand{\HP}{\Bo{P}}
\newcommand{\bbar}{\overline}
\newcommand{\ti}{\widetilde}
\newcommand{\eq}[2]{\begin{equation} \label{#1} \begin{split} #2 \end{split} \end{equation}}
\newcommand{\eqq}[1]{\begin{align*} #1 \end{align*}}
\newcommand{\eqs}[1]{\begin{gather*} #1 \end{gather*}}
\newcommand{\mat}[1]{\begin{smallmatrix} #1 \end{smallmatrix}}
\newcommand{\hx}{\hspace{10pt}}
\newcommand{\gaiseki}{\!\times\!}
\renewcommand{\Re}{\mathrm{Re}\,}
\renewcommand{\Im}{\mathrm{Im}\,}
\newcommand{\mbx}{\mathbf{x}}
\newcommand{\mbn}{\mathbf{n}}
\newcommand{\mbm}{\mathbf{m}}
\newcommand{\mbe}{\mathbf{e}}
\newcommand{\mbu}{\mathbf{u}}
\newcommand{\mbz}{\mathbf{0}}
\newcommand{\mbom}{{\boldsymbol \om}}
\begin{document}

\title[3D Euler flows with finite Fourier modes]
{Characterization of three-dimensional Euler flows supported on finitely many Fourier modes} 

\author{Nobu Kishimoto}
\address{Research Institute for Mathematical Sciences, Kyoto University, Kyoto 606-8502, Japan} 
\email{nobu@kurims.kyoto-u.ac.jp} 

\author{Tsuyoshi Yoneda}
\address{
Graduate School of Economics, Hitotsubashi University,
Tokyo 186-8601, Japan 
} 
\email{t.yoneda@r.hit-u.ac.jp}

\date{\today} 


\maketitle

\newcommand{\Stwo}{\mathbb{S}^2}
\newcommand{\cR}[1]{\Sc{R}_{#1}}


\begin{abstract} 
Recently, the Nash-style convex integration has been becoming 
the main scheme for the mathematical study of turbulence,
and the main building block of it has been either Beltrami flow (finite mode) or Mikado flow (compactly supported in the physical side). 
On the other hand, in physics, it is observed that turbulence is composed of a hierarchy of scale-by-scale vortex stretching.
Thus our mathematical motivation in this study is to find another type of building blocks accompanied by
vortex stretching and scale locality (possibly finitely many Fourier modes).
In this paper, we give a complete list of solutions to the 3D Euler equations with finitely many Fourier modes, which is an extension of the corresponding 2D result by Elgindi-Hu-\v{S}ver\'ak (2017).
In particular, we show that there is no 3D Euler flows with finitely many Fourier modes, except for stationary 2D-like flows and Beltrami flows. 
We also discuss the case when viscosity and Coriolis effect are present.
\end{abstract}

\section{Introduction}

Recent DNS
\cite{Goto-2008,GSK,Motoori-2019,Motoori-2021} of
turbulence at sufficiently high Reynolds numbers have reported that
there exists a hierarchy of vortex stretching motions in developed turbulence.
In particular, Goto-Saito-Kawahara \cite{GSK} clearly observed that turbulence at sufficiently high Reynolds numbers in a periodic cube is composed of a self-similar
hierarchy of antiparallel pairs of vortex tubes, and  it is sustained
by creation of smaller-scale vortices due to stretching in larger-scale strain fields. 
They also observed that vortices at each hierarchical level are most likely to
be stretched in strain fields around two to eight times larger vortices (we  call it scale locality). This observation is further investigated by Y-Goto-Tsuruhashi \cite{YGT} (see also \cite{TGOY}). Thus we could conclude physically  
that  local-scale energy transfer is mainly induced by vortex stretching, and in mathematics, the following question naturally arises (see also \cite{JY1,JY2,JY3} for the related results):

\vspace{0.4cm}

\begin{quotation}
\begin{center}
``Can we construct a solution (locally in scale) to the  incompressible Euler equations accompanied by vortex stretching, as a concrete picture of the hierarchy of turbulence?"
\end{center}
\end{quotation}

\vspace{0.4cm}

Nowadays, the Nash-style convex integration has been becoming the main scheme for 
the mathematical study of turbulence.
This scheme was first initiated by De Lellis and Sz{\'e}kelydihi Jr. \cite{DS}.
They showed the existence of a $C^{0+}_{x,t}$ weak solution of the 3D Euler equations which is non-conservative, following the Nash scheme with Beltrami building blocks.
After several results appears,  Isett \cite{I} showed existence of dissipative weak solutions in the regularity class $C^{1/3-}_{x,t}$ by using the Mikado flows, as building blocks. Thus  in the Nash scheme, Beltrami flows and Mikado flows (both are stationary Euler flows) are the elementary pieces  in multi-scale ideal turbulence.
More precisely, in the Nash scheme,  we need to construct a sequence of triplets $(v_q,p_q,\mathring R_q)_{q=1}^\infty$ solving the following Euler-Reynolds system (see\cite{BLIS}):
\begin{equation*}
\begin{split}
\partial_tv_q+\nabla\cdot (v_q\otimes v_q)+\nabla p_q&=\nabla\cdot  \mathring{R}_q\\
\nabla\cdot v_q&=0.
\end{split}
\end{equation*}
And then we set the perturbation $w_q:=v_q-v_{q-1}$ as the following:
\begin{equation*}
w_q(x,t)=\sum_ka_{k,q}(x,t)\phi_{k,q}(x,t)B_{k,q}e^{i\lambda_qk\cdot x},
\end{equation*}
where $a_{k,q}$ is the amplitude, $\phi_{k,q}$ is a phase function and $B_{k,q}e^{i\lambda_q k\cdot x}$
is a complex Beltrami mode at frequency $\lambda_q$ (in this case, this is the building block).
Thus in this Nash scheme, we need to choose appropriate $a_{k,q}$ and $\phi_{k,q}$ in order to have  $\mathring R_q\to 0$ in a weak sense.

Clearly, the antiparallel pair of vortex tubes with stretching motion (locally in scale) is neither Beltrami flow nor Mikado flow,
thus, in order to construct a concrete picture of turbulence by using this Nash scheme, we need to solve the following question: 

\vspace{0.4cm}

\begin{quotation}
\begin{center}
``Can we find another type of Euler flows as building blocks (possibly finitely many Fourier modes), to construct the dissipative Euler solutions?"
\end{center}
\end{quotation}

\vspace{0.4cm}

In this paper, we give a partial answer to it, namely, we show that, at least,  finite-mode (stationary or non-stationary) Euler flow does not exist
except for stationary 2D-like flows and Beltrami flows. 
This means that we cannot construct any Euler flow accompanied by vortex stretching supported on finitely many
Fourier modes. 
In what follows, let us formulate this partial answer more precisely.

The incompressible Euler equations on $\R^3$ are expressed as follows:
\eq{Euler}{\p _t\mathbf{u}+(\mathbf{u}\cdot \nabla )\mathbf{u}+\nabla p=\mbz ,\qquad \nabla \cdot \mathbf{u}=0,}
and we characterize (real-valued) solutions of the form
\eqs{
\mathbf{u}(t,\mbx)=\sum _{\mbn \in S}\mathbf{u}_\mbn(t)e^{i\mbn \cdot \mbx},\qquad p(t,\mbx )=\sum _{\mbn \in S_p}p_\mbn (t)e^{i\mbn \cdot \mbx};\\
\mathbf{u}_\mbn:I\to \mathbb{C}^3,\quad p_\mbn:I\to \mathbb{C},\quad \text{$I\subset \R$: open interval},\quad \text{$S$, $S_p$: \emph{finite} subsets of $\mathbb{R}^3$}.
}
Note that the corresponding problem in 2D was already answered by Elgindi-Hu-\v{S}ver\'ak~\cite{EHS17} (see Theorem~\ref{thm:2D} below).
We do not restrict the frequency to a lattice, so the solutions we consider are in general spatially quasi-periodic.

The real-valuedness implies that $S$ should be symmetric (i.e., $-S=S$) and $\mathbf{u}_{-\mbn}=\overline{\mathbf{u}_\mbn}$ for $\mbn\in S$, and similarly for $S_p$ and $\{ p_\mbn\} _{\mbn\in S_p}$.
Since the zero mode of $p$ is not relevant, we may assume $\mbz \not\in S_p$.
If $\mbz \in S$, we see from the equation that the zero mode (or the spatial mean) is independent of $t$; $\mathbf{u}_\mbz (t)\equiv \mathbf{u}_\mbz$.
Then, $(\mathbf{v},q)$ defined by 
\eqs{\mathbf{v}(t,\mbx)=\mathbf{u}\big( t, \mbx+t\mathbf{u}_\mbz \big) -\mathbf{u}_\mbz =\sum _{\mbn\in S\setminus \{ \mbz \}} \mathbf{u}_\mbn(t)e^{it\mbn\cdot \mathbf{u}_\mbz}e^{i\mbn\cdot \mbx},\\
q(t,\mbx)=p\big( t, \mbx+t\mathbf{u}_\mbz \big) =\sum _{\mbn\in S_p}p_\mbn(t)e^{it\mbn\cdot \mathbf{u}_\mbz}e^{i\mbn\cdot \mbx}
}
is a mean-zero solution of \eqref{Euler} with the same Fourier support.
Conversely, for any given mean-zero solution $(\mathbf{v},q)$ and any $\mathbf{u}_\mbz \in \mathbb{R}^3$, we obtain a solution $(\mathbf{u},p)$ with $\mathbf{u}_\mbz (t)\equiv \mathbf{u}_\mbz$ by inverting the above transformation.
Therefore, it suffices to characterize mean-zero solutions supported on finitely many Fourier modes.

\begin{defn}
Let $\Sc{H}$ denote the set of all real-valued divergence-free vector fields with finitely many Fourier modes; that is,
\eqq{\Sc{H}:=\big\{ \mathbf{u}(x)=\sum _{\mbn\in S}\mathbf{u}_\mbn e^{i\mbn\cdot \mbx} ~:~&S\subset \R ^3\setminus \{ \mbz \} ~\text{finite and symmetric},\\[-10pt]
&\text{$\mathbf{u}_\mbn\in \Bo{C}^3\setminus \{ \mbz \}$,\hx $\mathbf{u}_{-\mbn}=\bbar{\mathbf{u}_\mbn}$,\hx $\mbn \cdot \mathbf{u}_\mbn=0$\hx ($\forall \mbn\in S$)}\big\} .}
Here and in the sequel, we also denote by ``$\cdot$'' the dot product for vectors in $\mathbb{C}^3$ (it is $\Bo{C}$-bilinear and different from the inner product of $\Bo{C}^3$ which is sesquilinear).
We also define the corresponding set $\Sc{H}_I$ of space-time functions on $I\times \mathbb{R}^3$, for an open interval $I\subset \R$, by
\eqq{\Sc{H}_I:=\{ \mathbf{u}(t,\mbx)=\sum _{\mbn \in S}\mathbf{u}_\mbn (t)e^{i\mbn \cdot \mbx} ~:~&S\subset \R ^3\setminus \{ \mbz \} ~\text{finite and symmetric},\hx \mathbf{u}_\mbn :I\to \Bo{C}^3,\\[-10pt]
&\text{$\mathbf{u}_\mbn \not\equiv \mbz$,\hx $\mathbf{u}_{-\mbn}=\bbar{\mathbf{u}_\mbn}$,\hx $\mbn \cdot \mathbf{u}_\mbn =0$\hx ($\forall t\in I$, $\forall \mbn \in S$)}\} .}
The set $S$ is called the Fourier support of $\mathbf{u}$.
\end{defn}
As in the definition of $\Sc{H}$ and $\Sc{H}_I$, 
by writing $\mathbf{u}(x)=\sum _{\mbn \in S}\mathbf{u}_\mbn e^{i\mbn \cdot \mbx}$ we normally assume that the coefficient vectors are nonzero for $\mbn \in S$, and we use the convention that $\mathbf{u}_\mbn=\mbz $ if $\mbn \not\in S$.
For $\mathbf{u}(t,\mbx )=\sum _{\mbn \in S}\mathbf{u}_\mbn (t)e^{i\mbn \cdot \mbx}\in \Sc{H}_I$, we see that
\eqq{(\mathbf{u}\cdot \nabla )\mathbf{u}(\mbx )=\frac{i}{2}\sum _{\mat{\mbn \in S+S\\ \mbn \neq \mbz}}e^{i\mbn \cdot \mbx}\sum _{\mat{\mbn_1,\mbn_2\in S\\ \mbn_1+\mbn_2=\mbn}}\big[ (\mathbf{u}_{\mbn_1}\cdot \mbn_2)\mathbf{u}_{\mbn_2}+(\mathbf{u}_{\mbn_2}\cdot \mbn_1)\mathbf{u}_{\mbn_1}\big] .
}
(Note that the zero mode does not appear due to the divergence-free condition.)
Hence, if $(\mathbf{u},p)$ is a solution to \eqref{Euler} on $I\times \R ^3$, then the pressure $p$ has the expression $p=\sum _{\mbn\in S_p}p_\mbn e^{i\mbn\cdot \mbx}$ with $S_p\subset \ti{S}:=S\cup \big[ (S+S)\setminus \{ \mbz \}\big]$ and
\eqq{\p _t\mathbf{u}_\mbn+\frac{i}{2}\sum _{\mat{\mbn_1,\mbn_2\in S\\ \mbn_1+\mbn_2=\mbn}}\big[ (\mathbf{u}_{\mbn_1}\cdot \mbn_2)\mathbf{u}_{\mbn_2}+(\mathbf{u}_{\mbn_2}\cdot \mbn_1)\mathbf{u}_{\mbn_1}\big] =-ip_\mbn \mbn,\qquad t\in I,\quad \mbn\in \ti{S}.}
For $\mbn \in \R^3\setminus \{ \mbz \}$, let $\hat{\HP}_{\mbn}$ denote the orthogonal projection in $\Bo{C}^3$ onto the two-dimensional subspace $\{ \mathbf{v}\in \mathbb{C}^3:\mbn \cdot \mathbf{v}=0\}$, so that $\hat{\HP}_\mbn$ is the representation of the Helmholtz projection on the Fourier side.
Since $\hat{\HP}_\mbn \mathbf{u}_\mbn=\mathbf{u}_\mbn$ and $\hat{\HP}_\mbn \mbn=\mbz$, it holds that
\begin{alignat}{2}
&\p _t\mathbf{u}_\mbn + \frac{i}{2}\hat{\HP}_\mbn \sum _{\mat{\mbn_1,\mbn_2\in S\\ \mbn_1+\mbn_2=\mbn}}\big[ (\mathbf{u}_{\mbn_1}\cdot \mbn_2)\mathbf{u}_{\mbn_2}+(\mathbf{u}_{\mbn_2}\cdot \mbn_1)\mathbf{u}_{\mbn_1}\big] =\mbz ,&\qquad &t\in I,\quad \mbn\in \ti{S}, \label{cond:Euler}\\
&\frac{i}{2}\big( \mathrm{Id}-\hat{\HP}_\mbn \big) \sum _{\mat{\mbn_1,\mbn_2\in S\\ \mbn_1+\mbn_2=\mbn}}\big[ (\mathbf{u}_{\mbn_1}\cdot \mbn_2)\mathbf{u}_{\mbn_2}+(\mathbf{u}_{\mbn_2}\cdot \mbn_1)\mathbf{u}_{\mbn_1}\big] =-ip_\mbn \mbn, & &t\in I,\quad \mbn\in \ti{S}. \label{pressure:Euler}
\end{alignat}
Once we obtain a solution $\{ \mathbf{u}_\mbn (t)\}_{\mbn \in S}$ of \eqref{cond:Euler}, $\{ p_\mbn (t)\}_{\mbn\in S_p}$ is determined by \eqref{pressure:Euler}; in fact, it is given by
\[ p_\mbn (t)=-\frac{1}{|\mbn|^2}\sum _{\mat{\mbn_1,\mbn_2\in S\\ \mbn_1+\mbn_2=\mbn}}(\mathbf{u}_{\mbn_1}(t)\cdot \mbn_2)(\mathbf{u}_{\mbn_2}(t)\cdot \mbn_1),\qquad t\in I,\quad \mbn\in \ti{S}.\]
Hence, it suffices to give a characterization of $\mathbf{u}\in \mathcal{H}_I$ for which $\{ \mathbf{u}_\mbn (t)\}_{\mbn \in S}$ solves the ODE system \eqref{cond:Euler}.
From now on, we consider the equation \eqref{cond:Euler} instead of \eqref{Euler}.
As observed in the 2D case \cite{EHS17}, a vector field $\mathbf{u}\in \Sc{H}_I$ solving \eqref{cond:Euler} has the Fourier coefficient vectors $\{ \mathbf{u}_\mbn (t)\}$ each of which has components real analytic in time.
In particular, the set
\eq{def:I_0}{I_0:=\{ t\in I: \mathbf{u}_\mbn (t)=\mbz ~~\text{for some $\mbn\in S$}\}}
cannot have an accumulation point in $I$.
Note that the Fourier support of $\mathbf{u}(t,\cdot )$ coincides with the set $S$ if $t\in I\setminus I_0$.

Before stating our result, let us recall some basic facts on Beltrami flows.
We call a (divergence-free) eigenfield of the curl operator a Beltrami flow; that is, $\mathbf{b}\in \Sc{H}$ is a Beltrami flow if
\eqq{\exists \lambda \in \Bo{C};\quad \nabla \times \mathbf{b}=\lambda \mathbf{b},
}
where ``$\times$'' stands for the cross product.
A Beltrami flow $\mathbf{b}(\mbx )$ is always a stationary solution to \eqref{Euler} with $p=-|\mathbf{b}|^2/2$, and also an eigenfield of the Laplace operator $\Delta$ with eigenvalue $-\lambda ^2$.
The following lemma gives a characterization of the Beltrami flows (for the proof, see, e.g., the argument in \cite[Section III]{MP91}):
\begin{lem}\label{lem:Beltrami}
A vector field $\mathbf{b}\in \Sc{H}$ is a Beltrami flow (with eigenvalue $\lambda$) if and only if $\lambda \in \R \setminus \{ 0\}$ and the following conditions (i)--(ii) hold:
\begin{enumerate}
\item The Fourier support of $\mathbf{b}$ is a subset of the sphere with radius $|\la |$ centered at the origin.
\item For each frequency $\mbn$ in the Fourier support of $\mathbf{b}$, the coefficient vector $\mathbf{b}_\mbn$ at $\mbn$ satisfies
$|\Re \mathbf{b}_\mbn|=|\Im \mathbf{b}_\mbn|$ and $\Re \mathbf{b}_\mbn \cdot \Im \mathbf{b}_\mbn =0$.
Moreover, $(\mbn,\Re \mathbf{b}_\mbn,\Im \mathbf{b}_\mbn)$ forms a right-handed (resp. left-handed) system if $\la >0$ (resp. $\la <0$).
\end{enumerate}
\end{lem}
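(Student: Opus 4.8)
The plan is to pass to the Fourier side, where the eigenfield equation becomes a purely algebraic, mode-by-mode condition. Writing $\mathbf{b}(\mbx)=\sum_{\mbn\in S}\mathbf{b}_\mbn e^{i\mbn\cdot\mbx}$, one has $\nabla\times\mathbf{b}=\sum_{\mbn\in S}(i\,\mbn\times\mathbf{b}_\mbn)\,e^{i\mbn\cdot\mbx}$, so $\nabla\times\mathbf{b}=\lambda\mathbf{b}$ is equivalent to
\[ i\,\mbn\times\mathbf{b}_\mbn=\lambda\,\mathbf{b}_\mbn\qquad(\forall\,\mbn\in S). \]
If $\lambda=0$, this forces $\Re\mathbf{b}_\mbn$ and $\Im\mathbf{b}_\mbn$ to be parallel to $\mbn$, while the divergence-free condition $\mbn\cdot\mathbf{b}_\mbn=0$ forces them orthogonal to $\mbn$; hence $\mathbf{b}_\mbn=\mbz$, contradicting $\mbn\in S$. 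So $\lambda\ne0$ in any case, and it remains to prove the equivalence of the above algebraic system with (i)--(ii) (together with $\lambda\in\R$).

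For the forward direction, fix $\mbn\in S$ and apply $\mbn\times(\,\cdot\,)$ to $\mbn\times\mathbf{b}_\mbn=-i\lambda\mathbf{b}_\mbn$. Using the triple product identity $\mbn\times(\mbn\times\mathbf{v})=\mbn(\mbn\cdot\mathbf{v})-(\mbn\cdot\mbn)\mathbf{v}$ (valid for $\mathbf{v}\in\Bo{C}^3$ by $\Bo{C}$-bilinearity of ``$\cdot$'' and ``$\times$''), together with $\mbn\cdot\mathbf{b}_\mbn=0$ and $\mbn\cdot\mbn=|\mbn|^2$, gives $-|\mbn|^2\mathbf{b}_\mbn=-\lambda^2\mathbf{b}_\mbn$, hence $\lambda^2=|\mbn|^2$. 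Thus $\lambda\in\R\setminus\{0\}$ and $|\mbn|=|\lambda|$, which is (i). Writing $\mathbf{b}_\mbn=\mathbf{a}+i\mathbf{c}$ with $\mathbf{a}:=\Re\mathbf{b}_\mbn$, $\mathbf{c}:=\Im\mathbf{b}_\mbn\in\R^3$ and separating real and imaginary parts of $i\,\mbn\times\mathbf{b}_\mbn=\lambda\mathbf{b}_\mbn$ yields $\mbn\times\mathbf{a}=\lambda\mathbf{c}$ and $\mbn\times\mathbf{c}=-\lambda\mathbf{a}$. Since $\mbn\cdot\mathbf{b}_\mbn=0$ gives $\mbn\perp\mathbf{a}$ and $\mbn\perp\mathbf{c}$, taking norms in $\mbn\times\mathbf{a}=\lambda\mathbf{c}$ and using $|\mbn|=|\lambda|$ gives $|\mathbf{a}|=|\mathbf{c}|$; also $\mathbf{c}=\lambda^{-1}\mbn\times\mathbf{a}\perp\mathbf{a}$, and $\mathbf{a},\mathbf{c}\ne\mbz$ (else $\mathbf{b}_\mbn=\mbz$). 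So $(\mbn,\mathbf{a},\mathbf{c})$ is an orthogonal triple of nonzero vectors, and the relation $\mbn\times\mathbf{a}=\lambda\mathbf{c}$ says precisely that it is right-handed if $\lambda>0$ and left-handed if $\lambda<0$; this is (ii).

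For the converse, assume $\lambda\in\R\setminus\{0\}$ and (i)--(ii), fix $\mbn\in S$, and put $\mathbf{a}:=\Re\mathbf{b}_\mbn$, $\mathbf{c}:=\Im\mathbf{b}_\mbn$. The divergence-free condition gives $\mbn\perp\mathbf{a},\mathbf{c}$, and by (ii) the vectors $\mathbf{a},\mathbf{c}$ are nonzero, orthogonal, and of equal length. Hence $\mbn\times\mathbf{a}$ is a scalar multiple of $\mathbf{c}$ of length $|\mbn|\,|\mathbf{a}|=|\lambda|\,|\mathbf{c}|$, and the handedness assumption in (ii) fixes its sign, giving $\mbn\times\mathbf{a}=\lambda\mathbf{c}$. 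Applying $\mbn\times(\,\cdot\,)$ and the triple product identity then gives $\lambda(\mbn\times\mathbf{c})=\mbn\times(\mbn\times\mathbf{a})=-|\mbn|^2\mathbf{a}=-\lambda^2\mathbf{a}$, so $\mbn\times\mathbf{c}=-\lambda\mathbf{a}$. Recombining, $i\,\mbn\times\mathbf{b}_\mbn=i(\mbn\times\mathbf{a})-(\mbn\times\mathbf{c})=i\lambda\mathbf{c}+\lambda\mathbf{a}=\lambda\mathbf{b}_\mbn$; since $\mbn\in S$ was arbitrary, $\nabla\times\mathbf{b}=\lambda\mathbf{b}$.

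I do not expect a genuine obstacle: once the problem is decomposed mode by mode it is elementary three-dimensional (real) vector algebra. The only slightly delicate point is the orientation bookkeeping — keeping track of how the sign of $\lambda$ matches the right-/left-handedness of $(\mbn,\Re\mathbf{b}_\mbn,\Im\mathbf{b}_\mbn)$ in both directions of the equivalence — together with the (automatic) compatibility with the reality constraint $\mathbf{b}_{-\mbn}=\overline{\mathbf{b}_\mbn}$: under $\mbn\mapsto-\mbn$ both $\mbn$ and $\Im\mathbf{b}_\mbn$ reverse sign, so neither the handedness nor the associated eigenvalue changes.
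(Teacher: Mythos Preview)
Your proof is correct and complete; the mode-by-mode reduction to the algebraic eigenvalue problem $i\,\mbn\times\mathbf{b}_\mbn=\lambda\mathbf{b}_\mbn$, followed by the triple-product identity and real/imaginary decomposition, is exactly the natural argument. The paper itself does not supply a proof of this lemma but merely cites \cite{MP91}, so there is nothing further to compare.
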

Note that the condition (i) characterizes eigenfields of the Laplacian, and the additional condition (ii) on the coefficient vectors is needed for a characterization of eigenfields of the curl. 
The so-called Arnold-Beltrami-Childress flow (ABC flow)
\[ \mbu (\mbx )=\Big( B\cos x^2+C\sin x^3,\,C\cos x^3+A\sin x^1,\,A\cos x^1+B\sin x^2\Big) ,\quad A,B,C\in \mathbb{R}, \]
which is also known as a stationary Euler flow, is an example of Beltrami flows.
In fact, its Fourier support is $\{ \pm (1,0,0),\pm(0,1,0),\pm (0,0,1)\}$ and the coefficient vectors are given by
\[ \mbu_{\pm (1,0,0)}=A\Big( 0, \frac{\pm 1}{2i}, \frac{1}{2}\Big) ,\quad \mbu_{\pm (0,1,0)}=B\Big( \frac{1}{2}, 0, \frac{\pm 1}{2i}\Big) ,\quad \mbu_{\pm (0,0,1)}=C\Big( \frac{\pm 1}{2i}, \frac{1}{2}, 0\Big) , \]
from which we easily see that the above conditions (i), (ii) are satisfied (with $\lambda =+1$).

The specific aim of this article is to give the complete list of the vector fields in $\Sc{H}_I$ solving \eqref{cond:Euler}.
Let us recall a characterization of such flows in 2D given in \cite{EHS17}:
\begin{thm}[{\cite[Theorem~5.1]{EHS17}}]\label{thm:2D}
If $\mathbf{u}(t,\mbx)$ is a (real-valued, mean-zero) solution of 2D incompressible Euler equations which is supported on finitely many Fourier modes, then $\mathbf{u}$ is independent of time.
Moreover, its Fourier support is either a subset of a circle centered at the origin, or a line passing through the origin.
\end{thm}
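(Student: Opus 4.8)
\medskip
\noindent\emph{Sketch of a proof.} In two dimensions the natural move is to pass to the scalar vorticity $\omega=\partial_1u^2-\partial_2u^1$, for which the incompressible Euler equations (for a mean-zero velocity) are equivalent to the transport equation $\partial_t\omega+u\cdot\nabla\omega=0$, with $u$ recovered from $\omega$ by the Biot--Savart law, $u_n=-i|n|^{-2}n^\perp\omega_n$ on the Fourier side (here $n^\perp=(-n_2,n_1)$). Writing $\omega(t,x)=\sum_{n\in S}\omega_n(t)e^{in\cdot x}$ with $S\subset\R^2\setminus\{0\}$ finite and symmetric and $\omega_{-n}=\overline{\omega_n}$, and projecting the equation onto the Fourier mode $n$, the fact that $\partial_t\omega$ is supported on $S$ while the quadratic term is supported on $\widetilde S:=S\cup\big((S+S)\setminus\{0\}\big)$ splits the problem into the \emph{evolution equations}
\[
\dot\omega_n=-\frac12\sum_{\substack{n_1,n_2\in S\\ n_1+n_2=n}}(n_1\wedge n_2)\Big(\frac{1}{|n_1|^2}-\frac{1}{|n_2|^2}\Big)\omega_{n_1}\omega_{n_2}\qquad(n\in S)
\]
and the \emph{constraint equations}
\[
\sum_{\substack{n_1,n_2\in S\\ n_1+n_2=n}}\frac{n_1\wedge n_2}{|n_1|^2}\,\omega_{n_1}\omega_{n_2}=0\qquad(n\in\widetilde S\setminus S),
\]
where $k\wedge l:=k_1l_2-k_2l_1$. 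As in \cite{EHS17}, the $\omega_n$ solve a polynomial ODE system, hence are real-analytic on $I$; in particular each $\omega_n\not\equiv0$ has only discretely many zeros, and a product of two of them is again not identically zero.

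The plan is to extract from the constraint equations that all frequencies in $S$ have the same modulus, or that they are all collinear. Take a vertex $\nu$ of $\operatorname{conv}(S)$ and a linear functional $\ell$ in the interior of the normal cone at $\nu$, chosen generically so that $\ell|_S$ is injective; then $\ell(\nu)=\max_S\ell$, and a short argument using $-S=S$ shows that (apart from the trivial case $|S|=2$, which is immediately a line through the origin) $\ell$ may be taken so that the second-largest value of $\ell$ on $S$ is attained at a single $\mu$ with $\mu\neq-\nu$. Then $n_0:=\nu+\mu$ satisfies $\ell(n_0)>\max_S\ell$ and $n_0\neq0$, so $n_0\in\widetilde S\setminus S$, and $\{n_1,n_2\}=\{\nu,\mu\}$ is the \emph{only} way to write $n_0$ as a sum of two elements of $S$. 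Hence the constraint equation at $n_0$ collapses to $(\nu\wedge\mu)\big(|\nu|^{-2}-|\mu|^{-2}\big)\omega_\nu(t)\omega_\mu(t)=0$ on $I$, and since $\omega_\nu\omega_\mu\not\equiv0$ this forces $|\nu|=|\mu|$ or $\nu\parallel\mu$. More generally, the same computation shows that whenever $(n_1,n_2)$ is a \emph{clean pair} --- meaning $n_1+n_2\in\widetilde S\setminus S$ and $\{n_1,n_2\}$ is the unique unordered pair in $S$ with that sum --- one has $|n_1|=|n_2|$ or $n_1\parallel n_2$.

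The main obstacle is to upgrade this local dichotomy into the global statement that $S$ lies on a single circle $\{|n|=\rho\}$ or a single line $\R\xi$ (which, being symmetric, passes through the origin). Sweeping $\ell$ across the normal cones of the various vertices of $\operatorname{conv}(S)$ produces clean pairs for many --- but not a priori all --- frequencies, and ``resonant'' coincidences in $S+S$ can destroy the uniqueness required of a clean pair; sorting out this bookkeeping is where essentially all the work lies. I would attempt an induction with respect to the order on $S$ induced by a generic linear functional, peeling off frequencies one at a time, and at each resonant configuration bring in the additional relations furnished by the conserved energy $\sum_n|n|^{-2}|\omega_n|^2$ and enstrophy $\sum_n|\omega_n|^2$ (both constant in $t$, hence --- together with real-analyticity --- constraining the shell sums $\sum_{|n|=\rho}|\omega_n|^2$), as well as the non-clean constraint equations themselves read as polynomial identities in $t$.

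Finally, once $S$ is known to lie on a line $\R\xi$, every $n_1\wedge n_2$ with $n_1,n_2\in S$ vanishes, so the entire nonlinearity --- both the evolution and the constraint terms --- is identically zero; thus $\dot\omega_n\equiv0$ and $\omega$, hence $u=\nabla^\perp\Delta^{-1}\omega$, is independent of time, with Fourier support on a line through the origin. If instead $S\subset\{|n|=\rho\}$, then $|n_1|=|n_2|=\rho$ in every term above, so again each coefficient $(n_1\wedge n_2)(|n_1|^{-2}-|n_2|^{-2})$ vanishes and the solution is steady, with Fourier support on a circle centered at the origin. This would complete the proof of the theorem.
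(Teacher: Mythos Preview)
Your setup is correct and matches the paper: pass to vorticity, derive the evolution/constraint equations, and observe (``Step~0'') that a pair $n_1,n_2$ does not interact iff $n_1\parallel n_2$ or $|n_1|=|n_2|$. Your notion of ``clean pair'' is exactly what the paper calls a simply interacting pair (SIP).

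The genuine gap is in the globalization. You propose to sweep a generic linear functional, peel off one frequency at a time, and handle resonant coincidences using energy/enstrophy conservation. This is not how the proof goes, and I do not see how to make it work: the conserved quantities give you relations among the $|\omega_n|^2$ summed over shells, not the pointwise frequency information you need to rule out a stray $m$ with $|m|\neq\rho$. The actual argument (as outlined in the paper after the statement of the theorem, and carried out in detail in the proof of Proposition~\ref{prop:planar}(i)) is purely convex-geometric and avoids all this:
\begin{itemize}
\item \emph{Step 1.} Assume $S^{conv}$ is not a segment. For each \emph{edge} $E$ of $S^{conv}$, take the linear functional $f$ with $f\equiv 1$ on $E$ (not a generic one). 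Then any $n_1,n_2\in S$ with $n_1+n_2$ equal to a given point on $2E$ must both lie on $E$; in particular the two adjacent points of $E\cap S$ closest to an endpoint form a clean pair. Since adjacent vertices are linearly independent, the parallel alternative is excluded and they have equal modulus. Iterating along the edge forces $E\cap S$ to be just the two endpoints, and doing this for every edge puts all vertices of $S^{conv}$ on a single circle centered at the origin.
\item \emph{Step 2.} Now \emph{any} two vertices (adjacent or not) have the same modulus, hence do not interact. This is automatic from Step~1 and Step~0 --- no extra work.
\item \emph{Step 3.} Suppose some $m\in S$ lies in the interior of $S^{conv}$. Let $N$ be the Minkowski functional of $S^{conv}$ and pick $m$ with $N(m)$ maximal among interior points. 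Pair $m$ with a suitable vertex $n$ so that $m+n\notin S$; any other pair summing to $m+n$ must, by the maximality of $N(m)$ and a short argument with the edge functional, consist of two vertices. By Step~2 those pairs contribute nothing, so the constraint at $m+n$ forces $|m|=|n|$, contradicting $m\in\mathrm{int}\,S^{conv}$.
\end{itemize}
The missing ideas in your sketch are precisely the edge functional (which makes the clean-pair bookkeeping trivial along $\partial S^{conv}$) and the Minkowski-functional extremality argument for the interior. Once you have those, energy and enstrophy play no role.
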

It is easy to see that the above necessary condition on the shape of the Fourier support is also sufficient for a vector field to be a stationary Euler flow.
In particular, there is no restriction on the coefficient vector of each Fourier mode.

The 2D result, Theorem~\ref{thm:2D}, was shown by the following steps:

\begin{itemize}
\item[Step 0:]
Two modes $\mathbf{u}_\mbn e^{i\mbn\cdot \mbx}$, $\mathbf{u}_\mbm e^{i\mbm \cdot \mbx}$ do not contribute to the mode $\mbn+\mbm$ through nonlinear interaction if and only if either $\mbn$ is parallel to $\mbm$ or $|\mbn|=|\mbm|$.

This can be easily shown by the vorticity representation.
(Notice here that the coefficient vectors $\mathbf{u}_\mbn,\mathbf{u}_\mbm$ are not relevant.)
\item[Step 1:]
Let $S^{conv}$ be the convex hull of the Fourier support $S$ of a solution, and assume that $S^{conv}$ is not contained in a line.
Then, all the vertices of $S^{conv}$ are located on a circle centered at the origin and there is no point on the boundary of $S^{conv}$ other than its vertices.

This can be verified from Step~0 and the following fact:
There is no contribution from the nonlinear interaction between any vertex of $S^{conv}$ and its adjacent point in $S$ on the boundary of $S^{conv}$ (which may be an adjacent vertex or a point on a side of $S^{conv}$).
This is roughly because the mode created by such two points could not be in $S$ and could not be achieved by any other pairs of two points in $S$.
\item[Step 2:]
Any two vertices of $S^{conv}$ (not necessarily adjacent) create no contribution through nonlinear interaction.

In 2D this is an immediate consequence of Step~0 and Step~1.
\item[Step 3:]
There is no point of $S$ in the interior of $S^{conv}$.
In particular, Step~2 shows that any two points in $S$ do not interact, and then the solution must be stationary.

This is shown via contradiction argument:
If not, then the interior point $\mbm \in S$ which is the ``farthest from the origin'' would interact with the point $\mbn \in S$ which is one of its ``nearest vertices'' of $S^{conv}$ to create nonzero contribution at a point outside $S$. Therefore there would be other pair(s) of points in $S$ which cancel it out.
However, all the other possible pairs would have to consist of two vertices of $S^{conv}$ by the definition of $\mbm$, and hence, by Step~2, create no output. 
We would thus come to a contradiction.
\end{itemize}

In 3D, the Euler flows with finitely many Fourier modes include at least the following stationary examples:
\begin{itemize}
\item $\mathbf{u}(\mbx)=(0,u^2(x^1),u^3(x^1))$, $p=0$, where $u^2,u^3$ are any functions supported on finitely many Fourier modes.
\item $\mathbf{u}(\mbx)=(0,0,u^3(x^1,x^2))$, $p=0$, where $u^3$ is any function supported on finitely many Fourier modes.
\item 2D flows: $\mathbf{u}(\mbx)=(u^1(x^1,x^2),u^2(x^1,x^2),0)$, $p(\mbx)=p(x^1,x^2)$, where $(u_1,u_2,p)$ is a 2D Euler flow supported on finitely many Fourier modes.
(A characterization of such a flow is given in Theorem~\ref{thm:2D}.)
\item Beltrami flows.
\end{itemize}
In a sharp contrast to the 2D case, any condition on the shape of the Fourier support is not enough by itself for characterization of the whole solutions supported on finitely many Fourier modes, and an extra condition on the coefficient vectors should be required.
Indeed, the Fourier support of a Beltrami flow is on a sphere centered at the origin (similarly to the circle condition in 2D), but not all such vector fields (i.e., divergence-free eigenfields of the Laplacian) are stationary Euler flows.

Now, we state our main result:
\begin{thm}\label{thm:main}
$\mathbf{u}=\mathbf{u}(t,\mbx)\in \Sc{H}_I$ is a solution of \eqref{cond:Euler} on an open interval $I\subset \R$ if and only if it is independent of time and satisfies one of the following:
\begin{enumerate}
\item $S$, the Fourier support of $\mathbf{u}(\mbx)\in \Sc{H}$, is a subset of a line passing through the origin.
\item $S$ has two linearly independent points and is a subset of a plane $P$ containing the origin.
Moreover, one of the following holds:
\begin{enumerate}
\item $\mathbf{u}(\mbx)$ is perpendicular to $P$ everywhere.
\item $\mathbf{u}(\mbx)=\mathbf{u}^\parallel (\mbx )+u^\perp (\mbx)\mbe^\perp$, where $\mbe^\perp$ denotes (one of) the unit normal vector to $P$ and $\mathbf{u}^\parallel:\R ^3 \to \R ^3$, $u^\perp:\R ^3\to \R$ satisfy the following:
\begin{itemize}
\item $\mathbf{u}^\parallel (\mbx)\in \Sc{H}$ is parallel to $P$ everywhere, and its Fourier support is a subset of a circle on $P$ (with radius $\lambda >0$) centered at the origin and contains at least four points.
\item $u^\perp(\mbx)=Q(\om (\mbx))-\big\langle Q(\om )\big\rangle$, where $\om (\mbx)$ is the scalar function defined by $\nabla \times \mathbf{u}^\parallel =\lambda \om \mbe^\perp$, $Q$ is a polynomial with real coefficients, and $\big\langle Q(\om )\big\rangle$ denotes the zero mode (mean value) of $Q(\om (\mbx ))$.
\end{itemize}
\end{enumerate}
\item $S$ has three linearly independent points and is a subset of a sphere centered at the origin, and $\mathbf{u}(\mbx)$ is a Beltrami flow.
\end{enumerate}
\end{thm}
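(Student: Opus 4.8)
The plan is to prove the two implications separately: the ``if'' direction is a verification, and the ``only if'' direction is a structural analysis refining the 2D argument recalled above. For \emph{sufficiency} one checks that each flow in the list solves \eqref{cond:Euler}. In case~(i), after rotating $S$ onto the $x^1$-axis one has $u^1\equiv 0$, so $(\mathbf{u}\cdot\nabla)\mathbf{u}=\mbz$ and $\mathbf{u}$ is steady; case~(iii) is the classical fact that a Beltrami flow is a steady Euler flow with $p=-|\mathbf{b}|^2/2$; and in case~(ii)(a) the field does not depend on the coordinate along $\mbe^\perp$, so again $(\mathbf{u}\cdot\nabla)\mathbf{u}=\mbz$. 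The only substantial check is~(ii)(b): since the Fourier support of $\mathbf{u}^\parallel$ lies on a circle in $P$, the sufficiency direction noted after Theorem~\ref{thm:2D} makes $\mathbf{u}^\parallel$ a steady 2D Euler flow, hence $\mathbf{u}^\parallel\cdot\nabla\om=0$ for the scalar $\om$ with $\nabla\times\mathbf{u}^\parallel=\la\om\mbe^\perp$, and therefore $\mathbf{u}^\parallel\cdot\nabla u^\perp=0$ with $u^\perp=Q(\om)-\langle Q(\om)\rangle$; since $\mathbf{u}^\parallel$ is independent of the $\mbe^\perp$-coordinate the nonlinearity decouples --- the $P$-tangential components reproduce the 2D Euler equations for $\mathbf{u}^\parallel$, and the $\mbe^\perp$-component reduces to $\mathbf{u}^\parallel\cdot\nabla u^\perp=0$ --- while the divergence-free condition, the constraints $\mathbf{n}\cdot\mathbf{u}_\mathbf{n}=0$, vanishing of the mean, and finiteness of the support are immediate.

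For \emph{necessity}, let $\mathbf{u}\in\Sc{H}_I$ solve \eqref{cond:Euler}. Since $t\mapsto\mathbf{u}_\mathbf{n}(t)$ is real-analytic, it suffices to argue for $t\in I\setminus I_0$, where the Fourier support is exactly $S$, so that geometric conclusions hold for the fixed set $S$. I would follow the four-step blueprint of the 2D proof, the decisive new difficulty being that Step~0 has no exact analogue in 3D: a pair of modes can fail to contribute to $\mathbf{n}_1+\mathbf{n}_2$, i.e.\ $\hat{\HP}_{\mathbf{n}_1+\mathbf{n}_2}[(\mathbf{u}_{\mathbf{n}_1}\cdot\mathbf{n}_2)\mathbf{u}_{\mathbf{n}_2}+(\mathbf{u}_{\mathbf{n}_2}\cdot\mathbf{n}_1)\mathbf{u}_{\mathbf{n}_1}]=\mbz$, also through special alignments of the coefficient vectors --- precisely the Beltrami mechanism, which is invisible in 2D. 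So the first task is a replacement lemma describing exactly when this happens; the point is that, unlike in 2D, the condition genuinely constrains the coefficient vectors, the prototypical new possibility being that, for non-parallel frequencies, they are helical eigenvectors lying on explicit complex lines in $\Bo{C}^3$ determined by $\mathbf{n}_1,\mathbf{n}_2$ --- the Beltrami case. With this lemma, the extreme-point arguments (Steps~1--3) carry through: every vertex of the convex hull $S^{conv}$ of $S$ lies on a sphere centered at the origin, $S$ has no point on the boundary of $S^{conv}$ other than a vertex, and $S$ has no point in the interior of $S^{conv}$ --- the extra work being to show that, whenever a helical configuration is forced along an extreme pair, it propagates a Beltrami structure onto an entire spherical shell. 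This produces a trichotomy according to $d:=\dim\operatorname{span}S\in\{1,2,3\}$.

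If $d=1$, then $S$ lies on a line through the origin, so $\mathbf{u}_{\mathbf{n}_1}\cdot\mathbf{n}_2=0$ for all $\mathbf{n}_1,\mathbf{n}_2\in S$, $\mathbf{u}$ is steady, and we are in case~(i). If $d=2$, put $P=\operatorname{span}S$ and write $\mathbf{u}=\mathbf{u}^\parallel+u^\perp\mbe^\perp$; since $\mathbf{n}\cdot\mbe^\perp=0$ for $\mathbf{n}\in P$, the $\mbe^\perp$-modes do not feed back into the $P$-tangential equations, so $\mathbf{u}^\parallel$ satisfies 2D Euler and Theorem~\ref{thm:2D} makes it time-independent with Fourier support on a circle on $P$ (or a line); the $\mbe^\perp$-component then obeys $\p_t u^\perp+\mathbf{u}^\parallel\cdot\nabla u^\perp=0$, and a further finite-mode argument --- a finite-mode scalar advected by the steady flow $\mathbf{u}^\parallel$, whose stream function is proportional to the Laplace eigenfunction $\om$, is constant along the streamlines, hence a polynomial in $\om$, and is then also $t$-independent --- puts $\mathbf{u}$ into case~(ii): sub-case~(a) when $\mathbf{u}^\parallel\equiv\mbz$, a reduction to case~(i) when the support of $\mathbf{u}^\parallel$ degenerates to a line, and sub-case~(b) otherwise, the ``at least four modes'' requirement being exactly what separates (b) from these degeneracies.

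The case $d=3$ is where I expect the principal obstacle to lie. The goal is to show that $S$ lies on a single sphere centered at the origin and that the coefficient vectors satisfy conditions~(i)--(ii) of Lemma~\ref{lem:Beltrami}, so that $\mathbf{u}$ is a Beltrami flow --- in particular steady. In contrast to 2D, where ``no two modes interact'' immediately gives stationarity, in 3D the helical cancellations cannot be avoided, and one must show they are rigid enough to be compatible only with globally Beltrami data; making this precise --- presumably by passing to the helical-mode decomposition and exploiting the vortex-stretching term $(\mbom\cdot\nabla)\mathbf{u}$, with $\mbom=\nabla\times\mathbf{u}$, in the vorticity equation, which has no 2D counterpart --- is the heart of the matter. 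Concretely, I would analyze the outermost shell $S_R=\{\mathbf{n}\in S:|\mathbf{n}|=\max_{\mathbf{m}\in S}|\mathbf{m}|\}$: the triads it supports, together with the absence of room above it and the replacement lemma, force all of $S_R$ into one helicity class; a descent over shells, combined with the standing hypothesis $\operatorname{span}S=\R^3$, then collapses every shell onto $S_R$ and propagates the helical relations to all of $S$, which is the assertion of case~(iii).
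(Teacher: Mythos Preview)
Your overall architecture (replacement lemma, then convex-hull analysis split by $d=\dim\operatorname{span}S$) matches the paper, and the sufficiency direction and the reduction of the $P$-tangential part in the planar case are fine. There are, however, two genuine gaps.

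\textbf{The $d=3$ case.} Your proposed shell-by-shell descent does not work as stated, and more importantly it misses the key mechanism. First, pairs on the outermost shell $S_R$ need not be simply interacting: four points of $S_R$ can form a parallelogram, so you cannot apply the replacement lemma to an arbitrary pair there. Second, and this is the real issue, even when the replacement lemma does apply to a pair $(\mbn_1,\mbn_2)$ on the same sphere, its conclusion is only $\mathbf{u}_{\mbn_2}=\gamma\,\Sc{R}_{\hat{\mbn}_1\mapsto\hat{\mbn}_2}\mathbf{u}_{\mbn_1}$ for some $\gamma\in\Bo{C}\setminus\{0\}$; this says nothing about $\mathbf{u}_{\mbn_1}$ itself being a helical eigenvector. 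The paper extracts the Beltrami property by a \emph{holonomy} argument: walking around the boundary of a face $F$ of $S^{conv}$, the composition $\Sc{R}_1=\Sc{R}_{\hat{\mbn}_p\mapsto\hat{\mbn}_1}\circ\cdots\circ\Sc{R}_{\hat{\mbn}_1\mapsto\hat{\mbn}_2}$ is a rotation around $\mbn_1$ whose angle, by the Gauss--Bonnet theorem, equals the area of the corresponding spherical polygon. Since $S^{conv}$ is symmetric with at least six faces, some face has spherical area in $(0,2\pi/3]$, hence not a multiple of $\pi$; then $\mathbf{u}_{\mbn_1}$, being an eigenvector of $\Sc{R}_1$ orthogonal to $\mbn_1$, is forced to be $BV^+$ or $BV^-$. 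This is the step that has no 2D analogue, and neither the shell picture nor the vorticity/stretching heuristic you mention supplies it. Only after this is established can one run Step~3 (no interior points), because that step needs that any two \emph{vertices} of $S^{conv}$ fail to interact.

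\textbf{The $d=2$ vertical component.} Your claim ``constant along streamlines, hence a polynomial in $\om$'' hides the whole difficulty. It is true that $u^\perp$ is transported by $\mathbf{u}^\parallel$ and that the streamlines are level sets of $\om$, but a finite-mode function constant on level sets of $\om$ is not automatically a polynomial in $\om$ --- one must prove it. The paper does this by a Fourier-side peeling argument: using the Minkowski functional $N$ of $S_\parallel^{conv}$, one shows that the outermost layer $\{N(\mbn)=q\}$ of $S_\perp$ is an integer dilate of $S_\parallel$, that the coefficients there match those of $\beta_q\om^q$ for a determined $\beta_q\in\R$, and that subtracting $\beta_q(\om^q-\langle\om^q\rangle)$ strictly lowers $q$; iterating gives the polynomial $Q$ and time-independence of $u^\perp$. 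Without this, you also have not shown $\partial_t u^\perp=0$.
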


We make some remarks on Theorem~\ref{thm:main}.

\begin{rem}
The flows of type (i) and (ii)-(a) are obtained by rotating those of the form $\mathbf{u}(\mbx)=(0,u^2(x^1),u^3(x^1))$ and $(0,0,u^3(x^1,x^2))$, respectively, while rotation of the 2D Euler flow $\mathbf{u}(\mbx)=(u^1(x^1,x^2),u^2(x^1,x^2),0)$ gives the flow of type (ii)-(b) with $Q=0$.
Hence, except for the obvious examples mentioned above, the flow of type (ii)-(b) with $Q\neq 0$ is the only possibility for the Euler flow with finite Fourier support.
In particular, it turns out that Beltrami flows are the only genuinely 3D Euler flows with finitely many Fourier modes.

The flow of type (ii)-(b) with $Q\neq 0$ is obtained as a rotation of a two-dimensional and three-component flow $\mbu (x^1,x^2)$; the theorem says that the vertical component $u^3(x^1,x^2)$ of such a solution (with finite Fourier support) must be a polynomial of the vorticity field associated with the horizontal component $(u^1(x^1,x^2),u^2(x^1,x^2))$.
We also notice that a flow of type (ii)-(b) is a Beltrami flow if and only if $Q(\om )=\pm \om$.
\end{rem}

\begin{rem}
The 2D result (Theorem~\ref{thm:2D}) remains true for complex-valued solutions, as mentioned in \cite{EHS17}.
This is, however, not the case in 3D; at some points of the proof of Theorem~\ref{thm:main} we will take advantage of real-valuedness in a more crucial way.
In fact, the complex-valued vector field 
\begin{align*}
\mbu (\mbx )&= \frac{1}{2i}\sum _{\sigma \in \{ \pm 1\}}\Big\{ \sigma (1,-1,0)e^{i(\sigma ,\sigma ,1)\cdot \mbx} +\sigma (1,1,0)e^{i(\sigma ,-\sigma ,-1)\cdot \mbx}\Big\} \\
&=\Big( e^{ix^3}\sin (x^1+x^2)+e^{-ix^3}\sin (x^1-x^2),\,-e^{ix^3}\sin (x^1+x^2)+e^{-ix^3}\sin (x^1-x^2),\,0\Big) 
\end{align*}
is a stationary solution to \eqref{Euler} with $p(\mbx)=\cos (2x^1)+\cos (2x^2)$, and its Fourier support consists of four points forming a regular tetrahedron, but it is not a Beltrami flow.
\end{rem}

\begin{rem}
In the 2D case \cite{EHS17}, the characterization given in Theorem~\ref{thm:2D} was used as  one of the key tools to investigate long-time behavior of solutions to \textit{partially undamped} Navier-Stokes equations on the 2D torus. 
With our theorem, part of their result may be extended to the 3D case, provided that the equation is appropriately modified so that it admits global strong solutions.
We plan to address this problem in a forthcoming paper.
\end{rem}

%

Here are some comments on the proof of Theorem~\ref{thm:main}.
In 2D the divergence-free condition reduces the problem to the scalar equation; while in 3D, two-dimensional degree of freedom still remains for each Fourier coefficient, which makes the argument substantially more involved. 
A basic tool is the characterization of two modes not interacting with each other, which corresponds to Step~0 in the 2D case and will be discussed in Section~\ref{sec:two}.
A straightforward calculation gives an algebraic description (Lemma~\ref{lem:two} below), and we also give a geometric interpretation using a rotation operator (Proposition~\ref{prop:two} below).
The algebraic characterization will be exploited to treat the planar case (i.e., the situation (ii) in the theorem) in Section~\ref{sec:2D}; the analysis on the horizontal component $\mbu^\parallel$ is similar to the proof for the 2D result in \cite{EHS17} based on elementary convex analysis, while the narrowing-down argument for the vertical component $u^\perp$ seems new and of its own interest.
The main novelty is the reduction to Beltrami flows in the 3D case (i.e., the situation (iii) in the theorem) to be presented in Section~\ref{sec:S}, where the geometric characterization of two non-interacting modes will be combined with the Gauss-Bonnet theorem to play a vital role.

Finally, we point out that our approach remains valid under the presence of viscosity and Coriolis effect.
Although it is not the main goal of this paper, as an interesting generalization of Theorem~\ref{thm:main} we will include the precise statement of characterization of finite-mode solutions in this setting and give a proof in Section~\ref{sec:NSC}.


\section{Interaction between two modes}
\label{sec:two}

In this section, we characterize two Fourier modes which do not interact (i.e., which do not give contribution through the nonlinear interaction of \eqref{cond:Euler}).
The characterization to be given in Lemma~\ref{lem:two} and Proposition~\ref{prop:two} will be a basic tool to prove Theorem~\ref{thm:main}.
This corresponds to Step~0 of the proof for the 2D result, but the condition is more complicated.

\begin{lem}\label{lem:two}
Let $\mbn_1,\mbn_2\in \R ^3\setminus \{ \mbz \}$ be two frequencies and assume they are linearly independent.
Let $\mathbf{e}^\perp$ be (one of) the unit vector perpendicular to $\mbn_1$ and $\mbn_2$, and define $\mathbf{e}^\parallel _\mbn:=\mathbf{e}^\perp \gaiseki \frac{\mbn}{|\mbn|}$ for $\mbn \in \{ \mbn_1,\mbn_2,\mbn_1+\mbn_2\}$, so that $\mathrm{Ran}\,\hat{\HP}_\mbn =\mathbb{C} \mathbf{e}^\parallel _\mbn \oplus \mathbb{C}\mathbf{e}^\perp$.
Let $\mathbf{u}_{1},\mathbf{u}_{2}\in \Bo{C}^3$ satisfy $\mathbf{u}_1 \cdot \mbn_1=\mathbf{u}_{2}\cdot \mbn_2=0$; i.e., $\hat{\HP}_{\mbn_j}\mathbf{u}_j=\mathbf{u}_j$, $j=1,2$, and write $\mathbf{u}_j =u_j^\parallel \mathbf{e}^\parallel _{\mbn_j}+u_j^\perp \mathbf{e}^\perp$.
Then, we have
\eqq{&\hat{\HP}_{\mbn_1+\mbn_2}\big[ (\mathbf{u}_1\cdot \mbn_2)\mathbf{u}_{2}+(\mathbf{u}_{2}\cdot \mbn_1)\mathbf{u}_{1}\big] \\
&\quad = \frac{(\mbn_1\gaiseki\mbn_2)\cdot \mbe^\perp}{|\mbn_1||\mbn_2|}\bigg\{ \frac{u_1^\parallel u_2^\parallel (|\mbn_2|^2-|\mbn_1|^2)}{|\mbn_1+\mbn_2|}\mathbf{e}^\parallel _{\mbn_1+\mbn_2}+\Big[ u_1^\parallel u_2^\perp |\mbn_2|-u_2^\parallel u_1^\perp |\mbn_1|\Big] \mathbf{e}^\perp \bigg\} .
}
In particular, for linearly independent $\mbn_1$ and $\mbn_2$, the output of the nonlinear interaction in \eqref{cond:Euler} between two modes $\mbu_1e^{i\mbn_1\cdot \mbx}$, $\mbu_2e^{i\mbn_2\cdot \mbx}$ (satisfying $\mathbf{u}_1 \cdot \mbn_1=\mathbf{u}_{2}\cdot \mbn_2=0$) has
\begin{alignat*}{2}
&\text{$\bullet$~non-zero component parallel to $\mbe^\parallel _{\mbn_1+\mbn_2}$} &\quad &\text{if and only if\quad $u_1^\parallel u_2^\parallel (|\mbn_1|^2-|\mbn_2|^2)\neq 0$,\quad and}\\
&\text{$\bullet$~non-zero component parallel to $\mbe^\perp$} & &\text{if and only if\quad $u_1^\parallel u_2^\perp |\mbn_2|\neq u_2^\parallel u_1^\perp |\mbn_1|$.}
\end{alignat*}
\end{lem}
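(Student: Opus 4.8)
The plan is to compute $\hat{\HP}_{\mbn_1+\mbn_2}\big[(\mathbf{u}_1\cdot\mbn_2)\mathbf{u}_2+(\mathbf{u}_2\cdot\mbn_1)\mathbf{u}_1\big]$ directly in the orthonormal frames adapted to the plane $\Pi:=\mathrm{span}\{\mbn_1,\mbn_2\}$. First I would record the elementary facts about these frames: since $\mbe^\perp$ is a unit normal of $\Pi$, for each $\mbn\in\{\mbn_1,\mbn_2,\mbn_1+\mbn_2\}$ the triple $\big(\mbe^\perp,\tfrac{\mbn}{|\mbn|},\mbe^\parallel_\mbn\big)$ is a right-handed orthonormal basis of $\R^3$; in particular $\mbe^\parallel_{\mbn_1},\mbe^\parallel_{\mbn_2},\mbe^\parallel_{\mbn_1+\mbn_2}$ all lie in $\Pi$, $\mbe^\perp\perp(\mbn_1+\mbn_2)$, and $\mathrm{Ran}\,\hat{\HP}_\mbn=\Bo{C}\mbe^\parallel_\mbn\oplus\Bo{C}\mbe^\perp$. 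Since the dot product on $\Bo{C}^3$ is $\Bo{C}$-bilinear and the frame vectors are real, every identity below is a standard real-vector identity with the complex scalars $u_j^\parallel,u_j^\perp$ merely pulled out.

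\emph{Step 1 (the scalar factors).} Because $\mbe^\perp\cdot\mbn_j=0$, only the parallel parts of $\mathbf{u}_j$ contribute to $\mathbf{u}_1\cdot\mbn_2$ and $\mathbf{u}_2\cdot\mbn_1$; a scalar triple product computation gives $\mbe^\parallel_{\mbn_1}\cdot\mbn_2=\tfrac{1}{|\mbn_1|}(\mbe^\perp\gaiseki\mbn_1)\cdot\mbn_2=\tfrac{(\mbn_1\gaiseki\mbn_2)\cdot\mbe^\perp}{|\mbn_1|}$ and $\mbe^\parallel_{\mbn_2}\cdot\mbn_1=-\tfrac{(\mbn_1\gaiseki\mbn_2)\cdot\mbe^\perp}{|\mbn_2|}$. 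Writing $D:=(\mbn_1\gaiseki\mbn_2)\cdot\mbe^\perp$ we obtain $\mathbf{u}_1\cdot\mbn_2=\tfrac{u_1^\parallel D}{|\mbn_1|}$, $\mathbf{u}_2\cdot\mbn_1=-\tfrac{u_2^\parallel D}{|\mbn_2|}$, and hence
\[ (\mathbf{u}_1\cdot\mbn_2)\mathbf{u}_2+(\mathbf{u}_2\cdot\mbn_1)\mathbf{u}_1 =D\Big[\tfrac{u_1^\parallel u_2^\parallel}{|\mbn_1|}\mbe^\parallel_{\mbn_2}-\tfrac{u_1^\parallel u_2^\parallel}{|\mbn_2|}\mbe^\parallel_{\mbn_1}+\Big(\tfrac{u_1^\parallel u_2^\perp}{|\mbn_1|}-\tfrac{u_2^\parallel u_1^\perp}{|\mbn_2|}\Big)\mbe^\perp\Big]. \]

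\emph{Step 2 (the projection).} The term $\mbe^\perp$ is already in $\mathrm{Ran}\,\hat{\HP}_{\mbn_1+\mbn_2}$, hence fixed; $\mbe^\parallel_{\mbn_1}$ and $\mbe^\parallel_{\mbn_2}$ lie in $\Pi$, where $\hat{\HP}_{\mbn_1+\mbn_2}$ acts as orthogonal projection onto $\R\mbe^\parallel_{\mbn_1+\mbn_2}$, so $\hat{\HP}_{\mbn_1+\mbn_2}\mbe^\parallel_{\mbn_j}=(\mbe^\parallel_{\mbn_j}\cdot\mbe^\parallel_{\mbn_1+\mbn_2})\mbe^\parallel_{\mbn_1+\mbn_2}$. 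Applying $(a\gaiseki b)\cdot(a\gaiseki c)=|a|^2(b\cdot c)-(a\cdot b)(a\cdot c)$ with $a=\mbe^\perp\perp\Pi$ collapses $\mbe^\parallel_{\mbn_j}\cdot\mbe^\parallel_{\mbn_1+\mbn_2}$ to $\tfrac{\mbn_j\cdot(\mbn_1+\mbn_2)}{|\mbn_j||\mbn_1+\mbn_2|}$; substituting this, the $\mbn_1\cdot\mbn_2$ cross terms cancel and the parallel output becomes $\tfrac{D\,u_1^\parallel u_2^\parallel(|\mbn_2|^2-|\mbn_1|^2)}{|\mbn_1||\mbn_2||\mbn_1+\mbn_2|}\mbe^\parallel_{\mbn_1+\mbn_2}$. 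Pulling $\tfrac{D}{|\mbn_1||\mbn_2|}=\tfrac{(\mbn_1\gaiseki\mbn_2)\cdot\mbe^\perp}{|\mbn_1||\mbn_2|}$ out of both surviving terms yields exactly the displayed formula.

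\emph{Step 3 (the ``in particular'' part).} The prefactor $\tfrac{(\mbn_1\gaiseki\mbn_2)\cdot\mbe^\perp}{|\mbn_1||\mbn_2|}$ equals $\pm\tfrac{|\mbn_1\gaiseki\mbn_2|}{|\mbn_1||\mbn_2|}$, which is nonzero because $\mbn_1,\mbn_2$ are linearly independent and $\mbe^\perp$ is parallel to $\mbn_1\gaiseki\mbn_2$. Since $\mbe^\parallel_{\mbn_1+\mbn_2}$ and $\mbe^\perp$ are orthonormal, the $\mbe^\parallel_{\mbn_1+\mbn_2}$- and $\mbe^\perp$-components of the output are precisely the two bracketed coefficients times this nonzero scalar, and reading off when each coefficient vanishes gives the two stated equivalences. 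I do not expect a genuine obstacle: the argument is an explicit computation, and the only points requiring care are the bookkeeping with the various orthonormal frames, the correct orientation in the scalar triple products, and the observation that $\Bo{C}$-bilinearity lets the complex amplitudes be treated as inert scalars.
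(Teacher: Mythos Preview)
Your proof is correct and follows essentially the same approach as the paper: both arguments rest on the scalar triple product identity $(\mbe^\perp\gaiseki\mbn_1)\cdot\mbn_2=-(\mbe^\perp\gaiseki\mbn_2)\cdot\mbn_1=(\mbn_1\gaiseki\mbn_2)\cdot\mbe^\perp$ and the cross-product identity yielding $(\mbe^\perp\gaiseki\mbn_j)\cdot\big(\mbe^\perp\gaiseki(\mbn_1+\mbn_2)\big)=\mbn_j\cdot(\mbn_1+\mbn_2)$. The only organizational difference is that you first write out the full vector $(\mathbf{u}_1\cdot\mbn_2)\mathbf{u}_2+(\mathbf{u}_2\cdot\mbn_1)\mathbf{u}_1$ and then project, whereas the paper dots directly against $\mbe^\parallel_{\mbn_1+\mbn_2}$ and $\mbe^\perp$; the computations are otherwise identical.
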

\begin{proof}
A straightforward calculation using
\eqs{
(\mathbf{e}^\perp \gaiseki \mbn_1)\cdot \mbn_2 =-(\mathbf{e}^\perp \gaiseki \mbn_2)\cdot \mbn_1 =(\mbn_1\gaiseki\mbn_2)\cdot \mbe^\perp ,\\
(\mbe^\perp\gaiseki\mbn_j)\cdot \big(\mbe^\perp \gaiseki (\mbn_1+\mbn_2)\big) =\mbn_j\cdot (\mbn_1+\mbn_2) \qquad (j=1,2)
}
yields that
\eqq{
\big[ (\mathbf{u}_1\cdot \mbn_2)\mathbf{u}_{2}+(\mathbf{u}_{2}\cdot \mbn_1)\mathbf{u}_{1}\big] \cdot \mathbf{e}^\parallel_{\mbn_1+\mbn_2}
&= u^\parallel _{1}u^\parallel_{2}\Big( \big[ (\mathbf{e}^\perp \gaiseki \tfrac{\mbn_1}{|\mbn_1|})\cdot \mbn_2 \big] \big[ (\mathbf{e}^\perp \gaiseki \tfrac{\mbn_2}{|\mbn_2|})\cdot (\mathbf{e}^\perp \gaiseki \tfrac{\mbn_1+\mbn_2}{|\mbn_1+\mbn_2|})\big] \\
&\qquad\qquad +\big[ (\mathbf{e}^\perp \gaiseki \tfrac{\mbn_2}{|\mbn_2|})\cdot \mbn_1 \big] \big[ (\mathbf{e}^\perp \gaiseki \tfrac{\mbn_1}{|\mbn_1|})\cdot (\mathbf{e}^\perp \gaiseki \tfrac{\mbn_1+\mbn_2}{|\mbn_1+\mbn_2|})\big] \Big) \\
&=u^\parallel _{1}u^\parallel_{2} \tfrac{(\mbn_1\times \mbn_2)\cdot \mbe^\perp}{|\mbn_1||\mbn_2||\mbn_1+\mbn_2|}\big( |\mbn_2|^2-|\mbn_1|^2\big) ,\\
\big[ (\mathbf{u}_1\cdot \mbn_2)\mathbf{u}_{2}+(\mathbf{u}_{2}\cdot \mbn_1)\mathbf{u}_{1}\big] \cdot \mathbf{e}^\perp
&=u_1^\parallel \big[ (\mbe^\perp \gaiseki \tfrac{\mbn_1}{|\mbn_1|}) \cdot \mbn_2\big] u_2^\perp +u_2^\parallel \big[ (\mbe^\perp \gaiseki \tfrac{\mbn_2}{|\mbn_2|}) \cdot \mbn_1\big] u_1^\perp \\
&=\tfrac{(\mbn_1\times \mbn_2)\cdot \mbe^\perp}{|\mbn_1||\mbn_2|}\big( u_1^\parallel u_2^\perp |\mbn_2|-u_2^\parallel u_1^\perp |\mbn_1|\big) .
}
The claim follows.
\end{proof}

For $\mbom _1,\mbom _2\in \Stwo$ with $\mbom _2\neq \pm \mbom _1$, let $\Sc{R}_{\mbom_1\mapsto \mbom_2}$ be the (unique) rotation on $\R^3$ mapping $\mbom_1$ to $\mbom_2$ along the geodesic of $\Stwo$ connecting these points.
In other words, $\Sc{R}_{\mbom_1\mapsto \mbom_2}$ is the rotation around the axis $\mbom_1\gaiseki\mbom_2$ by the angle $\theta \in (0,\pi)$ with $\cos \theta =\mbom_1\cdot \mbom_2$.
Note that $\Sc{R}_{\mbom_1\mapsto \mbom_2}$ maps the tangent plane $T_{\mbom _1}\Stwo$ isometrically to $T_{\mbom _2}\Stwo$.
We regard $\Sc{R}_{\mbom_1\mapsto \mbom_2}$ as the operator on $\Bo{C}^3$ by letting it act on the real and the imaginary parts respectively.

\begin{prop}\label{prop:two}
Let $\mbn_1,\mbn_2\in \R^3\setminus \{ \mbz \}$ be two frequencies such that $\mbn_1\neq \pm \mbn_2$, and let $\mathbf{u}_{1},\mathbf{u}_{2}\in \Bo{C}^3\setminus \{ \mbz \}$ satisfy $\mathbf{u}_1 \cdot \mbn_1=\mathbf{u}_{2}\cdot \mbn_2=0$.
Then, we have the identity
\eq{cond1}{\hat{\HP}_{\mbn_1+\mbn_2}\big[ (\mathbf{u}_1\cdot \mbn_2)\mathbf{u}_{2}+(\mathbf{u}_{2}\cdot \mbn_1)\mathbf{u}_{1}\big] =\mbz}
if and only if one of the following holds:
\begin{enumerate}
\item $\mbn_1$ and $\mbn_2$ are linearly dependent.
(No additional condition is imposed on $\mathbf{u}_{1},\mathbf{u}_{2}$.)
\item (i) does not hold, and the real and the imaginary parts of $\mathbf{u}_1,\mathbf{u}_2$ are all perpendicular to the plane containing $\mbn_1,\mbn_2$ and the origin.
\item (i), (ii) do not hold, $|\mbn_1|=|\mbn_2|$, and moreover 
$\mathbf{u}_2=\ga \Sc{R}_{\hat{\mbn}_1\mapsto \hat{\mbn}_2}\mathbf{u}_1$
for some $\ga \in \Bo{C}\setminus \{ 0\}$, where $\hat{\mbn}_j:=\mbn_j/|\mbn_j|$.
\end{enumerate}
\end{prop}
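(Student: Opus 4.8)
The plan is to dispose of the linearly dependent case by a direct computation and then, in the linearly independent case, to read everything off the algebraic formula of Lemma~\ref{lem:two}.

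\emph{Case (i).} Suppose $\mbn_1,\mbn_2$ are linearly dependent. Since they are nonzero and $\mbn_1\neq\pm\mbn_2$, we have $\mbn_2=c\mbn_1$ for some $c\in\R\setminus\{0,\pm1\}$, whence $\mathbf{u}_1\cdot\mbn_2=c(\mathbf{u}_1\cdot\mbn_1)=0$ and $\mathbf{u}_2\cdot\mbn_1=c^{-1}(\mathbf{u}_2\cdot\mbn_2)=0$. Thus the whole bracket $(\mathbf{u}_1\cdot\mbn_2)\mathbf{u}_2+(\mathbf{u}_2\cdot\mbn_1)\mathbf{u}_1$ vanishes, so \eqref{cond1} holds with no restriction on $\mathbf{u}_1,\mathbf{u}_2$; this settles the equivalence when (i) holds.

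\emph{Linearly independent case.} Now assume $\mbn_1,\mbn_2$ are linearly independent, so $(\mbn_1\gaiseki\mbn_2)\cdot\mbe^\perp\neq0$. Writing $\mathbf{u}_j=u_j^\parallel\mbe^\parallel_{\mbn_j}+u_j^\perp\mbe^\perp$ and invoking Lemma~\ref{lem:two}, the identity \eqref{cond1} is equivalent to the $2\times2$ system
\[ u_1^\parallel u_2^\parallel\big(|\mbn_1|^2-|\mbn_2|^2\big)=0 \qquad\text{and}\qquad u_1^\parallel u_2^\perp|\mbn_2|=u_2^\parallel u_1^\perp|\mbn_1| . \]
I would solve this under the standing assumption $\mathbf{u}_1,\mathbf{u}_2\neq\mbz$. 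If $u_1^\parallel=0$, then $u_1^\perp\neq0$, and the second equation forces $u_2^\parallel=0$; hence $\mathbf{u}_1,\mathbf{u}_2\in\Bo{C}\mbe^\perp$, which is exactly case (ii) (the real and imaginary parts of $\mathbf{u}_1,\mathbf{u}_2$ being multiples of $\mbe^\perp$, hence perpendicular to the plane of $\mbn_1,\mbn_2$). If instead $u_1^\parallel\neq0$, then the second equation together with $\mathbf{u}_2\neq\mbz$ forces $u_2^\parallel\neq0$ as well; the first equation then gives $|\mbn_1|=|\mbn_2|$, and dividing the second by $|\mbn_1|=|\mbn_2|$ yields $u_1^\parallel u_2^\perp=u_2^\parallel u_1^\perp$, i.e.\ $(u_2^\parallel,u_2^\perp)=\ga(u_1^\parallel,u_1^\perp)$ for some $\ga\in\Bo{C}\setminus\{0\}$. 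Conversely both configurations obviously solve the system, so \eqref{cond1} holds precisely when either (ii) is in force, or $|\mbn_1|=|\mbn_2|$ and $(u_2^\parallel,u_2^\perp)=\ga(u_1^\parallel,u_1^\perp)$.

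\emph{Geometric reformulation.} It remains to identify this last proportionality condition with case (iii). Here I would use that the rotation axis $\hat{\mbn}_1\gaiseki\hat{\mbn}_2$ is a nonzero multiple of $\mbe^\perp$, so $\Sc{R}_{\hat{\mbn}_1\mapsto\hat{\mbn}_2}\mbe^\perp=\mbe^\perp$, and that every rotation $R$ satisfies $R(a\gaiseki b)=(Ra)\gaiseki(Rb)$; applying this with $a=\mbe^\perp$, $b=\hat{\mbn}_1$ gives $\Sc{R}_{\hat{\mbn}_1\mapsto\hat{\mbn}_2}\mbe^\parallel_{\mbn_1}=\mbe^\perp\gaiseki\hat{\mbn}_2=\mbe^\parallel_{\mbn_2}$. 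Acting on real and imaginary parts and using $\Bo{C}$-linearity, $\Sc{R}_{\hat{\mbn}_1\mapsto\hat{\mbn}_2}\mathbf{u}_1=u_1^\parallel\mbe^\parallel_{\mbn_2}+u_1^\perp\mbe^\perp$, so $(u_2^\parallel,u_2^\perp)=\ga(u_1^\parallel,u_1^\perp)$ is literally the same statement as $\mathbf{u}_2=\ga\,\Sc{R}_{\hat{\mbn}_1\mapsto\hat{\mbn}_2}\mathbf{u}_1$; and since $u_1^\parallel\neq0$ in this branch, (ii) fails there, matching the nested phrasing of the proposition. The only delicate point in the whole argument is the bookkeeping in the linearly independent case: the hypotheses $\mathbf{u}_1,\mathbf{u}_2\neq\mbz$ are needed both to rule out spurious partial solutions of the $2\times2$ system and to pin down exactly which of (ii), (iii) occurs; everything else is routine.
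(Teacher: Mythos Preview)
Your proof is correct and follows essentially the same route as the paper: both reduce the linearly independent case to the two scalar equations coming from Lemma~\ref{lem:two} and then solve that system using the nonvanishing of $\mathbf{u}_1,\mathbf{u}_2$. Your treatment is in fact slightly more explicit than the paper's, since you spell out the direct computation in case~(i) and the identification $\Sc{R}_{\hat{\mbn}_1\mapsto\hat{\mbn}_2}\mbe^\parallel_{\mbn_1}=\mbe^\parallel_{\mbn_2}$, $\Sc{R}_{\hat{\mbn}_1\mapsto\hat{\mbn}_2}\mbe^\perp=\mbe^\perp$ that the paper relegates to Remark~\ref{rem:beltrami1}.
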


\begin{rem}
By the above proposition, we see the remarkable fact (but similar to the 2D case) that two frequencies with no interaction must have exactly the same size, except for rather trivial situations (i), (ii).
\end{rem}

\begin{rem}\label{rem:beltrami1}
When $\mbn_1,\mbn_2$ are linearly independent, the conditions (ii), (iii) in Proposition~\ref{prop:two} can be rewritten with the notation introduced in Lemma~\ref{lem:two} as follows:
\begin{alignat*}{2}
\textrm{(ii)} ~~ &\Longleftrightarrow &\quad \textrm{(ii)'}&\quad u_1^\parallel =u_2^\parallel =0, \\
\textrm{(iii)} ~~ &\Longleftrightarrow & \textrm{(iii)'}&\quad
u_1^\parallel u_2^\parallel \neq 0,\quad |\mbn_1|=|\mbn_2|,\quad (u_2^\parallel ,u_2^\perp)=\gamma (u_1^\parallel ,u_1^\perp )\quad \text{with $\gamma =u^\parallel _2/u^\parallel _1\in \mathbb{C}\setminus \{ 0\}$.}
\end{alignat*}
Indeed, the first equivalence is trivial, while the second one can be seen by observing that $\Sc{R}_{\hat{\mbn}_1\mapsto \hat{\mbn}_2}\mbe^\parallel_{\mbn_1}=\mbe^\parallel_{\mbn_2}$ and $\Sc{R}_{\hat{\mbn}_1\mapsto \hat{\mbn}_2}\mbe^\perp =\mbe^\perp$.
\end{rem}

\begin{proof}[Proof of Proposition~\ref{prop:two}]
By Remark~\ref{rem:beltrami1}, we may consider the conditions (ii)' and (iii)' instead of (ii) and (iii), respectively.
The sufficiency of (i)--(ii)'--(iii)' for \eqref{cond1} is easily checked by the representation given in Lemma~\ref{lem:two}.
To prove the necessity of (i)--(ii)'--(iii)', assume that \eqref{cond1} holds and that (i), (ii)' do not hold.
By Lemma~\ref{lem:two}, it holds that
\[ u_1^\parallel u_2^\parallel (|\mbn_1|^2-|\mbn_2|^2)=0,\qquad u_1^\parallel u_2^\perp |\mbn_2| =u_2^\parallel u_1^\perp |\mbn_1|.\]
Since (ii)' does not hold, at least one of $u^\parallel _1$ and $u^\parallel _2$ is non-zero.
From the second equality of the above and the assumption that both $\mbu_1$ and $\mbu_2$ are non-zero, we deduce that both of $u^\parallel _1$ and $u^\parallel _2$ are non-zero.
Then, we see $|\mbn_1|=|\mbn_2|$ from the first equality, and $u^\perp _2=u^\parallel _2u^\perp _1/u^\parallel_1$ from the second one.
We have thus verified the condition (iii)'.
\end{proof}


\section{Characterization in the planar case}
\label{sec:2D}

In this section, we consider the case where $S^{conv}$ is two dimensional and conclude the following:
\begin{prop}\label{prop:planar}
Let $\mathbf{u}(t,\mbx)\in \Sc{H}_I$ be a solution of \eqref{cond:Euler} on an interval $I\subset \R$, and assume that the convex hull $S^{conv}$ of its Fourier support $S$ is a (nondegenerate) symmetric polygon on a plane $P$.
Assume further that $\mathbf{u}(t,\mbx)$ is not perpendicular to $P$ for some $(t,\mbx)\in I\times \R ^3$.
Consider the decomposition $\mathbf{u}(t,\mbx)=\mathbf{u}^\parallel (t,\mbx)+u^\perp (t,\mbx)\mbe^\perp$, where $\mathbf{u}^\parallel (t,\mbx)\in \Sc{H}_I$ is parallel to $P$ (and not identically zero), and $\mbe^\perp$ is (one of) the unit normal vector to $P$.

Then, $\mathbf{u}$ is independent of $t$, and the following holds.
\begin{enumerate}
\item The Fourier support of $\mathbf{u}^\parallel \in \Sc{H}$, denoted by $S_\parallel$, contains at least four points and is a subset of a circle centered at the origin (with its radius denoted by $\lambda$).
\item There exists a (unique) polynomial $Q$ with real coefficients and without the constant term such that $u^\perp(\mbx)=Q(\om (\mbx))-\big\langle Q(\om )\big\rangle$, where $\om (\mbx)$ is the (unique) scalar function satisfying $\nabla \gaiseki \mathbf{u}^\parallel =\lambda \om \mbe^\perp$ and $\big\langle Q(\om )\big\rangle$ is the zero mode of $Q(\om (\mbx))$.
\end{enumerate}

Conversely, any field $\mbu =\mbu ^\parallel +u^\perp \mbe^\perp \in \mathcal{H}$ satisfying the above conditions is a stationary solution of \eqref{cond:Euler}.
\end{prop}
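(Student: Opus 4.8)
The plan is to split \eqref{cond:Euler}, in the planar situation, into one equation for the in‑plane part $\mathbf{u}^\parallel$ and one for the transverse amplitude $u^\perp$. Since $S^{conv}\subset P$, every frequency of $\mathbf{u}$ lies in $P$, so any two linearly independent $\mbn_1,\mbn_2\in S$ span $P$ and the unit vector $\mbe^\perp$ of Lemma~\ref{lem:two} is, for all such pairs, one and the same — the fixed unit normal to $P$. Writing $\mathbf{u}_\mbn=a_\mbn\mbe^\parallel_\mbn+u^\perp_\mbn\mbe^\perp$, Lemma~\ref{lem:two} then splits \eqref{cond:Euler} into an equation for $\{a_\mbn\}$ not involving $\{u^\perp_\mbn\}$ — which is exactly \eqref{cond:Euler} for the two‑dimensional field $\mathbf{u}^\parallel$ on $P$, so by Theorem~\ref{thm:2D} $\mathbf{u}^\parallel$ is $t$‑independent with $S_\parallel$ on a circle centered at the origin or on a line through the origin — and a linear equation for $\{u^\perp_\mbn\}$, which is the transport equation $\partial_t u^\perp+\mathbf{u}^\parallel\cdot\nabla u^\perp=0$. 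Writing $\mathcal{L}:=\mathbf{u}^\parallel\cdot\nabla$ and introducing the stream function $\psi$ of $\mathbf{u}^\parallel$ (so $\mathcal{L}f=\{\psi,f\}$, the Jacobian; since the Fourier support of $\psi$ is $S_\parallel$, once $S_\parallel$ lies on the circle of radius $\lambda$ one has $\omega=-\lambda\psi$), this reads $\partial_t u^\perp+\{\psi,u^\perp\}=0$.

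For (i), I would rule out $S_\parallel\subset\ell$ for a line $\ell$ through the origin. First, $S_\parallel$ cannot meet $\partial S^{conv}$: a point $\mbn^*$ of $S_\parallel$ on $\partial S^{conv}$ creates, with an adjacent point $\mbn'\in S$ on $\partial S^{conv}$, the extreme mode $\mbn^*+\mbn'$, which lies outside $S^{conv}$ and is attained by no other pair, so $\mbn^*,\mbn'$ do not interact; since $a_{\mbn^*}\neq 0$, Proposition~\ref{prop:two} forces either $\mbn'$ to lie on $\ell$ with $|\mbn'|=|\mbn^*|$, hence $\mbn'=-\mbn^*$ (impossible for a nondegenerate polygon, as $\mbz\in\mathrm{int}\,S^{conv}$), or $a_{\mbn'}=0$, which by propagation around the connected $\partial S^{conv}$ would make $a$ vanish on all of $S\cap\partial S^{conv}$, contradicting $a_{\mbn^*}\neq 0$. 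Hence $S_\parallel\subset\mathrm{int}\,S^{conv}$, and $\mathbf{u}^\parallel=c(\hat{\mbn}_0\cdot\mbx)\,\mbv_0$ is a stationary shear ($\mbn_0\in S_\parallel$, $\mbv_0$ a unit vector in $P$ with $\mbv_0\perp\mbn_0$, and $c$ a nonconstant real one‑variable trigonometric polynomial); its flow is a linear shear, so $u^\perp(t,\mbx)=\sum_\mbn\hat v_\mbn\,e^{i\mbn\cdot\mbx}e^{-it(\mbn\cdot\mbv_0)c(\hat{\mbn}_0\cdot\mbx)}$, and grouping by the component of $\mbn$ transverse to $\ell$ shows that for each group the one‑variable factor $g(y)e^{-i\tau c(y)}$ ($\tau\neq0$) must be a trigonometric polynomial. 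This is impossible: along the imaginary axis $c$ is unbounded while a ratio of trigonometric polynomials has a definite exponential asymptotic, so $g\equiv0$; thus $u^\perp$ depends only on the coordinate along $\ell$, so $S\subset\ell$, a contradiction. Therefore $S_\parallel$ lies on a unique circle of radius $\lambda>0$ and, being symmetric and noncollinear, has at least four points — this is (i).

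For (ii), $\mathbf{u}^\parallel$ is now a fixed stationary 2D Euler flow with spectrum on the circle of radius $\lambda$, so $\{\psi,\omega\}=0$. I would treat $u^\perp$ (with Fourier support $S_\perp$) in three steps. (a) \emph{Support.} Since $\mathcal{L}u^\perp(t,\cdot)=-\partial_t u^\perp(t,\cdot)$ stays supported in the fixed set $S_\perp$, the $\xi$‑extreme mode $\mbm_0(\xi)+N_0(\xi)$ of $\mathcal{L}u^\perp$ in a generic direction $\xi$ (where $\mbm_0(\xi)\in S_\parallel$, $N_0(\xi)\in S_\perp$ are the extreme frequencies) receives a contribution only from the pair $(\mbm_0,N_0)$ yet lies outside $S_\perp$, forcing $(\mbm_0(\xi)\times N_0(\xi))\cdot\mbe^\perp=0$, i.e. $N_0(\xi)\parallel\mbm_0(\xi)$; matching the edge‑normal directions of $\mathrm{conv}\,S_\parallel$ and $\mathrm{conv}\,S_\perp$ (where $\mbm_0$ and $N_0$ jump, and which must coincide) then makes the proportionality ratio constant around the polygon, so $\mathrm{conv}\,S_\perp=d\cdot\mathrm{conv}\,S_\parallel$ with vertex set $\{d\mbn:\mbn\in S_\parallel\}$ and $S_\perp$ inside the disc of radius $d\lambda$ (the same computation applies to any real trigonometric polynomial killed by $\{\psi,\cdot\}$). (b) \emph{Time‑independence.} Since all $\partial_t^k u^\perp=(-\mathcal{L})^k u^\perp$ stay supported in $S_\perp$, $u^\perp(t,\cdot)$ lies in the largest $\mathcal{L}$‑invariant subspace $W$ of $\mathrm{span}\{e^{i\mbn\cdot\mbx}:\mbn\in S_\perp\}$, on which $\mathcal{L}$ is skew‑adjoint; a nonzero eigenvalue $i\mu$ of $\mathcal{L}|_W$, with eigenfunction $\phi$, would yield the invariant $|\phi|^2=P(\psi)$ (a polynomial, by step (c)) together with a function $\alpha$ satisfying $\{\psi,\alpha\}=2\mu$ and $e^{i\alpha}=\phi/\bar\phi$ single‑valued — impossible because the streamlines of $\mathbf{u}^\parallel$ have nonconstant period (near a separatrix of $\psi$ the period diverges). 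Hence $\mathcal{L}u^\perp=0$, so $u^\perp$ is $t$‑independent and $\{\psi,u^\perp\}=0$. (c) \emph{Polynomiality.} From $\{\psi,u^\perp\}=0$ the gradients of $\psi$ and $u^\perp$ are everywhere parallel, so (after complexification) $u^\perp=G(\psi)$ near any point where $\nabla\psi\neq0$, in particular near each complex ray $\zeta\mapsto i\zeta\hat{\mbn}$, $\mbn\in S_\parallel$; comparing $\psi(i\zeta\hat{\mbn})\sim\psi_{-\mbn}e^{\lambda\zeta}$ with $u^\perp(i\zeta\hat{\mbn})\sim u^\perp_{-d\mbn}e^{d\lambda\zeta}$ forces $d\in\mathbb{Z}_{\ge0}$ and $u^\perp_{-d\mbn}=c_d\,(\psi_{-\mbn})^d$ with a common real $c_d$. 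Then $u^\perp-c_d\psi^d$ is again killed by $\{\psi,\cdot\}$ and, by step (a), of strictly smaller degree, so induction on $d$ gives $u^\perp=\widetilde Q(\omega)$ for a real polynomial $\widetilde Q$; reorganizing the constant, $u^\perp=Q(\omega)-\langle Q(\omega)\rangle$ with $Q$ without constant term, and $Q$ is unique since $\omega$ is nonconstant.

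The converse is a direct verification: if $\mathbf{u}^\parallel\in\mathcal{H}$ is parallel to $P$ with $S_\parallel$ on a circle centered at the origin, then $\mathbf{u}^\parallel$ is automatically a stationary 2D Euler flow with $\{\psi,\omega\}=0$, so $\mathcal{L}(Q(\omega))=Q'(\omega)\{\psi,\omega\}=0$ and $\mathbf{u}=\mathbf{u}^\parallel+(Q(\omega)-\langle Q(\omega)\rangle)\mbe^\perp$ is divergence‑free, mean‑zero, and solves \eqref{cond:Euler}. The main difficulty I expect is Part (ii): unlike the purely combinatorial Steps~0--3 of the 2D argument, convex geometry alone does not suffice here, and one must control the dynamics of the generally chaotic flow $\mathbf{u}^\parallel$ — both for time‑independence (step (b)) and to pass from ``constant on streamlines'' to ``polynomial in the vorticity'' (step (c)) — the incommensurate‑frequency (quasi‑periodic) case requiring extra care.
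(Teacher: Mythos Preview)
Your decomposition and the treatment of (i) and the converse are fine, though both differ from the paper's. For ruling out $S_\parallel\subset\ell$, the paper picks the $\mbn_1\in S_\parallel$ farthest along $\ell$ and the $\mbn_2\in S_\perp\setminus\ell$ with maximal $\mbn_1\cdot\mbn_2$, then checks directly on the Fourier side that no other pair can cancel the output at $\mbn_1+\mbn_2$; your shear-flow/essential-singularity argument is a valid alternative. (The intermediate claim ``$S_\parallel\cap\partial S^{conv}=\emptyset$'' is both unnecessary and slightly mis-argued: when $a_{\mbn^*}\neq 0$, Proposition~\ref{prop:two}(ii) is already excluded, so ``$a_{\mbn'}=0$'' is not one of the remaining options.)

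The genuine divergence is in (ii). The paper stays entirely on the Fourier side: it shows (Lemma~\ref{lem:planar1}) that $q:=\max_{\mbn\in S_\perp}N(\mbn)$ is a positive integer and that $S_\perp\cap\partial(qS_\parallel^{conv})$ is exactly the set of subdivision points $\{\tilde{\mbn}_{j,k}=(q-k)\mbn_j+k\mbn_{j+1}\}$; then (Lemma~\ref{lem:planar2}) it solves the resulting two-term recurrences for $u^\perp_{\tilde{\mbn}_{j,k}}$ along $\partial(qS_\parallel^{conv})$ explicitly, matches them with the outer Fourier shell of $\omega^q$, reads off a single real $\beta_q$ with $\beta_q'=0$, peels off $\beta_q\omega^q$, and iterates. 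Time-independence and polynomiality fall out together, with no input from the dynamics of $\mbu^\parallel$.

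Your route replaces this by dynamics and complex analysis. Step~(a) is correct but weaker than Lemma~\ref{lem:planar1} (you get the vertices of $\mathrm{conv}\,S_\perp$, not the subdivision points, nor $d\in\mathbb{Z}$). Step~(c) can be made to work once $\{\psi,u^\perp\}=0$ is known, but you must argue that the local relations $u^\perp=G(\psi)$ patch to a single entire $G$ (analytic continuation through the connected set $\{\nabla\psi\neq 0\}\subset\mathbb{C}^2$), which you have not spelled out and which is needed for the ``common $c_d$''. The real gap is step~(b): your argument that $\mathcal{L}|_W$ has no nonzero eigenvalue rests on the period of closed orbits of $\mbu^\parallel$ being nonconstant (blowing up near a separatrix). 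This is justifiable when $S_\parallel$ lies on a lattice---Morse theory on $\mathbb{T}^2$ forces saddles, and analyticity of $\phi$ then finishes---but the proposition explicitly allows quasi-periodic frequencies, and there the orbits of $\mbu^\parallel$ on $\mathbb{R}^2$ need not be closed at all, so the period argument has no obvious meaning. You flag this as ``requiring extra care'', but without a fix the proof is incomplete in exactly the generality claimed. The paper's combinatorial route sidesteps the issue entirely.
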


\begin{proof}[Proof of (i)]
First, we prove the property (i) and that $\mbu^\parallel(t,\mbx)$ is independent of $t$.
Take the horizontal component (i.e., the component parallel to $P$) of the equation \eqref{cond:Euler} to obtain
\eq{cond:parallel}{\p_t\mbu^\parallel _\mbn +\frac{i}{2}\hat{\HP}_{\mbn}\sum _{\mat{\mbn_1,\mbn_2\in S_\parallel \\ \mbn_1+\mbn_2=\mbn}}\big[ (\mbu^\parallel_{\mbn_1}\cdot \mbn_2)\mbu^\parallel _{\mbn_2}+(\mbu^\parallel_{\mbn_2}\cdot \mbn_1)\mbu^\parallel _{\mbn_1}\big] =\mbz .
}
Namely, the horizontal component $\mbu^\parallel(t,\mbx)$ is in itself a solution of \eqref{cond:Euler}.
In particular, for each $\mbn\in S_\parallel$ the coefficient vector $\mbu^\parallel_{\mbn}(t)$ is non-zero for almost all $t\in I$.

We first claim that $S_\parallel$ has two linearly independent vectors.
Suppose for contradiction that it is contained in a line $\ell$ on $P$.
In this case, the second term on the left-hand side of \eqref{cond:parallel} vanishes, so the horizontal component $\mbu^\parallel$ is independent of $t$.
Choose $\mbn_1\in S_\parallel \subset \ell$ so that $|\mbn_1|=\max \{ |\mbn|:\mbn\in S_\parallel \}$, and take any $t_0\in I\setminus I_0$, where $I_0$ is defined by \eqref{def:I_0}.
Notice that the Fourier support of $u^\perp (t_0)$, which we denote by $S_\perp (t_0)$ and for which $S=S_\parallel \cup S_\perp (t_0)$ holds, contains at least one point outside $\ell$.
Choose $\mbn_2\in S_\perp (t_0)\setminus \ell$ so that $\mbn_2\cdot \mbn_1=\max \{ \mbn\cdot \mbn_1:\mbn\in S_\perp (t_0)\setminus \ell \}$, and consider the nonlinear interaction between $\mbn_1$ and $\mbn_2$.
First, we have $\mbn_1+\mbn_2\not\in \ell$ and $(\mbn_1+\mbn_2)\cdot \mbn_1>\mbn_2\cdot \mbn_1$, hence $\mbn_1+\mbn_2\not\in S$ by the definition of $\mbn_2$.
Secondly, suppose that $\mbn_1+\mbn_2=\mbn_3+\mbn_4$ for some pair of distinct points $\{ \mbn_3,\mbn_4\}\neq \{ \mbn_1,\mbn_2\}$ in $S$.
One of $\mbn_3,\mbn_4$ must be away from $\ell$, so we assume $\mbn_3\not\in \ell$.
Then, we see $\mbn_4\not\in S_\parallel$; otherwise, we would have $\mbn_3\cdot \mbn_1=\mbn_2\cdot \mbn_1+|\mbn_1|^2-\mbn_4\cdot \mbn_1>\mbn_2\cdot \mbn_1$ by the definition of $\mbn_1$ and that $\mbn_4\neq \mbn_1$, which contradicts the definition of $\mbn_2$. 
Now, the condition (ii)' in Remark~\ref{rem:beltrami1} holds for the pair $\mbn_3,\mbn_4$ and the associated coefficient vectors $\mbu_{\mbn_3}(t_0),\mbu_{\mbn_4}(t_0)$, so there is no interaction (i.e., \eqref{cond1} holds) between these frequencies.
By the equation \eqref{cond:Euler} with $\mbn =\mbn_1+\mbn_2$ and the fact $\mbn_1+\mbn_2\not\in S$, we deduce that the frequencies $\mbn_1,\mbn_2$ and the associated coefficient vectors at $t=t_0$ must also satisfy \eqref{cond1}.
But now, since $\mbn_2$ is linearly independent with $\mbn_1$, $u_{\mbn_1}^\parallel \neq 0$, $u_{\mbn_2}^\parallel =0$ and $u_{\mbn_2}^\perp (t_0)\neq 0$, Lemma~\ref{lem:two} implies that the vertical component (i.e., the component perpendicular to $P$) of the left-hand side of \eqref{cond1} is nonzero, which is a contradiction.
Therefore, $S_\parallel$ is not contained in a line, and has at least four points by symmetry.

We next show that $S_\parallel$ is contained in a circle on $P$ centered at the origin.
The proof is almost the same as that of Theorem~\ref{thm:2D} in \cite{EHS17}, but we will give it for completeness.
The proof consists of the following two steps:
\begin{enumerate}
\item[(a)] $S^{conv}_\parallel$ is inscribed in a circle centered at the origin, and $S_\parallel \cap \p S^{conv}_\parallel$ has no point other than the vertices of $S^{conv}_\parallel$.
\item[(b)] There is no point of $S_\parallel$ in the interior of $S^{conv}_\parallel$.
\end{enumerate}

To show (a), we take any side $E$ of $S^{conv}_\parallel$, and let $\mbn_1,\dots ,\mbn_p$ be the list of all points in $E\cap S_\parallel$ which are located in this order (and hence $\mbn_1,\mbn_p$ are the two endpoints of $E$).
It then suffices to verify $p=2$ and $|\mbn_1|=|\mbn_2|$.
Let $N$ be the Minkowski functional of the convex polygon $S_\parallel^{conv}$ on $P$; i.e., $N(\mbn):=\inf \{ r>0:\mbn \in rS^{conv}_\parallel \}$ for $\mbn \in P$.
Note that $S^{conv}_\parallel =\{ \mbn \in P:N(\mbn)\leq 1\}$ and $\p S^{conv}_\parallel =\{ \mbn \in P:N(\mbn)=1\}$.
Let $f$ be the linear functional on $P$ such that $f\equiv 1$ on $E$.
We see that $f\leq 1$ on $S^{conv}_\parallel$ and that $\mbn\in S^{conv}_\parallel$ and $f(\mbn )=1$ imply $\mbn\in E$. 
Note also that $N\equiv f$ on the sectorial region $\{ r\mbn :\mbn \in E,\,r\geq 0\}$.
With these functionals, consider the interaction between $\mbn_1$ and $\mbn_2$.
First, we see $\mbn_1+\mbn_2\not\in S_\parallel$ from $N(\mbn_1+\mbn_2)=f(\mbn_1+\mbn_2)=f(\mbn_1)+f(\mbn_2)=2>1$.
Secondly, if $\mbn,\mbn'\in S_\parallel$ satisfy $\mbn+\mbn'=\mbn_1+\mbn_2$, then we have $\mbn,\mbn'\in E$ (since $2=f(\mbn+\mbn')=f(\mbn )+f(\mbn')\leq 1+1=2$ implies $f(\mbn )=f(\mbn')=1$), so by the definition of $\mbn_j$'s the only possibility is that $\{ \mbn ,\mbn' \}=\{ \mbn_1,\mbn_2\}$.
Therefore, the equality \eqref{cond:parallel} with $\mbn=\mbn_1+\mbn_2$ shows that $\hat{\HP}_{\mbn_1+\mbn_2}\big[ (\mbu^\parallel _{\mbn_1}(t)\cdot \mbn_2)\mbu^\parallel _{\mbn_2}(t)+(\mbu^\parallel_{\mbn_2}(t)\cdot \mbn_1)\mbu^\parallel _{\mbn_1}(t)\big] =\mbz$ on $I$.
This and Lemma~\ref{lem:two} verify $|\mbn_1|=|\mbn_2|$, since $\mbn_1$, $\mbn_2$ are linearly independent and $\mbu^\parallel_{\mbn_1}(t),\mbu^\parallel_{\mbn_2}(t)$ are non-zero for almost all $t\in I$.
It remains to see $p=2$, so suppose $p>2$.
Then, the same argument verifies $|\mbn_{p-1}|=|\mbn_p|$, and in particular, each of the perpendicular bisectors of two segments $[\mbn_1,\mbn_2]$, $[\mbn_{p-1},\mbn_p]$ passes through the origin.
This is however impossible, because these lines are parallel.
Hence, we have $p=2$.

To prove (b), suppose for contradiction that $S_\parallel \setminus \p S_\parallel ^{conv}\neq \emptyset$, and choose $\mbn_0$ such that $N(\mbn_0)=\max \{ N(\mbn ):\mbn \in S_\parallel \setminus \p S_\parallel ^{conv}\}$, so that $0<N(\mbn_0)<1$ and $S_\parallel \setminus \p S^{conv}_\parallel \subset N(\mbn_0)S^{conv}_\parallel$.
Then, there exist two adjacent vertices $\mbn_1,\mbn_2$ of $S^{conv}_\parallel$ and $0\leq \theta <1$ such that $\mbn_0=N(\mbn_0)[(1-\theta )\mbn_1+\theta \mbn_2]$.
To derive a contradiction, we consider the interaction between $\mbn_0$ and $\mbn_2$ (note that these frequencies are linearly independent since $\theta \neq 1$).
Let $f$ be as above; namely, the linear functional on $P$ satisfying $f(\mbn_1)=f(\mbn_2)=1$, and note that $N\equiv f$ on the sectorial region $\{ r[(1-\theta )\mbn_1+\theta \mbn_2]:r\geq 0,\,0\leq \theta \leq 1\}$.
Since $f(\mbn_0+\mbn_2)=N(\mbn_0)+N(\mbn_2)>1$, we have $\mbn_0+\mbn_2\not\in S_\parallel$.
Assume that $\mbn+\mbn'=\mbn_0+\mbn_2$ for a pair $\{ \mbn,\mbn'\}\subset S_\parallel$ which is different from $\{ \mbn_0,\mbn_2\}$.
We claim that both of $\mbn,\mbn'$ are vertices of $S^{conv}_\parallel$:
In fact, if $f(\mbn)=1$, then we see from (a) that $\mbn \in \{ \mbn_1,\mbn_2\}$, and therefore $( \mbn,\mbn')=(\mbn _1, \mbn_0+(\mbn_2-\mbn_1))$.
By a simple geometric observation and $N(\mbn_0)<1$, we have $\mbn' =\mbn_0+(\mbn_2-\mbn_1)\not\in N(\mbn_0)S^{conv}_\parallel$, and thus $\mbn'$ must be one of the vertices of $S^{conv}_\parallel$.
In the same manner, if $f(\mbn')=1$, then $\mbn'=\mbn_1$ and $\mbn$ must be another vertex.
If $f(\mbn ),f(\mbn')<1$, then from $f(\mbn)+f(\mbn')=1+N(\mbn_0)$ we have $N(\mbn_0)<f(\mbn),f(\mbn')<1$, which shows that both of $\mbn,\mbn'$ are vertices (different from $\mbn_1,\mbn_2$).
Consequently, it must hold that $|\mbn|=|\mbn'|$ and, by Lemma~\ref{lem:two}, that these two frequencies do not contribute to \eqref{cond:parallel}; i.e., $\hat{\HP}_{\mbn_0+\mbn_2}\big[ (\mbu^\parallel _{\mbn}(t)\cdot \mbn')\mbu^\parallel _{\mbn'}(t)+(\mbu^\parallel_{\mbn'}(t)\cdot \mbn )\mbu^\parallel _{\mbn}(t)\big] =\mbz$ on $I$.
Using the equation \eqref{cond:parallel} at $\mbn_0+\mbn_2\not\in S_\parallel$, we deduce that $\hat{\HP}_{\mbn_0+\mbn_2}\big[ (\mbu^\parallel _{\mbn_0}(t)\cdot \mbn_2)\mbu^\parallel _{\mbn_2}(t)+(\mbu^\parallel_{\mbn_2}(t)\cdot \mbn_0)\mbu^\parallel _{\mbn_0}(t)\big] =\mbz$ on $I$.
This and Lemma~\ref{lem:two} again imply $|\mbn_0|=|\mbn_2|$, which is a contradiction.
We have thus verified that $S_\parallel \setminus \p S_\parallel ^{conv}=\emptyset$, and the claim (i).

Finally, we point out that any pair of frequencies in $S_\parallel$ is of the same distance from the origin, and hence has no contribution to the sum in \eqref{cond:parallel} by Lemma~\ref{lem:two}.
This implies that $\p_t\mbu^\parallel =\mbz$; i.e., $\mbu^\parallel$ is independent of $t$.
\end{proof}

\begin{proof}[Proof of (ii)]
Next, we prove that $u^\perp (t,\mbx)$ is also independent of $t$ and it can be represented as claimed in (ii).
Let $S_\parallel$ consist of $p$ points $\mbn_0,\mbn_1,\dots ,\mbn_{p-1}\in P$ lying on the circle in this order, and let $\lambda :=|\mbn_0|=|\mbn_1|=\cdots =|\mbn_{p-1}|$.
As in Lemma~\ref{lem:two}, the horizontal component $\mbu^\parallel(\mbx)$ can be represented as
\eq{rep:horizontal}{\mbu^\parallel (\mbx)=\sum _{j=0}^{p-1}\alpha_{j}\mbe^\parallel _{j}e^{i\mbn_j\cdot \mbx}\quad \text{with some $\alpha_{0},\dots ,\alpha_{p-1}\in \mathbb{C}\setminus \{ 0\}$, where $\mbe^\parallel _{j}:=\mbe ^\perp \gaiseki \tfrac{\mbn_j}{|\mbn_j|}$.}}
Since $\mbn_j\gaiseki \mbe^\parallel_{j}=\lambda \mbe^\perp$, the scalar function $\omega$ satisfying $\nabla \gaiseki \mathbf{u}^\parallel =\lambda \om \mbe^\perp$ is represented by
\[ \omega (\mbx) = \sum _{j=0}^{p-1}i\alpha_{j}e^{i\mbn_j\cdot \mbx}.\]
We also note that $i\alpha_{j+(p/2)}=\overline{i\alpha_j}$ for $0\leq j<p/2$, since $\mbu^\parallel$ and $\om$ are real-valued.
On the other hand, the vertical component of the equation \eqref{cond:Euler} reads as
\eq{cond:Euler^h}{\p_t u^\perp _\mbn (t)+i\sum _{\mat{(\mbn',\ti{\mbn})\in S_\parallel \times S_\perp(t) \\ \mbn'+\ti{\mbn}=\mbn}}(\mbu^\parallel_{\mbn'}\cdot \ti{\mbn})u^\perp_{\ti{\mbn}}(t) =0,\quad \text{or}\quad \p_tu^\perp +(\mbu^\parallel \cdot \nabla )u^\perp =0,\qquad t\in I,}
where $\mbu^\parallel_{\mbn'}=\alpha _j\mbe^\parallel_j$ for $\mbn'=\mbn_j$, $j=0,\dots,p-1$.
What we need to prove is that any finite-mode, real-valued and mean-zero solution $u^\perp (t,\mbx )=\sum _{\mbn}u^\perp_\mbn (t)e^{i\mbn \cdot \mbx}$ to \eqref{cond:Euler^h}
is represented as $u^\perp (t,\mbx )=u^\perp (\mbx )=Q(\om (\mbx ))-\big\langle Q(\om)\big\rangle$ for some real polynomial $Q$ without the constant term.

We continue to use the Minkowski functional of $S^{conv}_\parallel$: $N(\mbn ):=\inf \{ r>0:\mbn \in rS^{conv}_\parallel \}$ ($\mbn\in P$).
We first claim the following:
\begin{lem}\label{lem:planar1}
Assume $u^\perp \not\equiv 0$, and let $S_\perp :=\cup _{t\in I}S_\perp (t)\,(\neq \emptyset )$.
Define 
\[ q:=\max \{ N(\mbn ):\mbn \in S_\perp\} >0,\]
i.e., $q$ is the smallest number satisfying $S_\perp \subset qS^{conv}_\parallel$.
Then, $q$ must be an integer and
\eqs{\{ \mbn \in S_\perp :N(\mbn )=q\} ~=~\{ \ti{\mbn}_{j,k}: j=0,1,\dots ,p-1,~k=0,1,\dots ,q-1 \} ,\\
\ti{\mbn}_{j,k}:=(q-k)\mbn_j+k\mbn_{j+1},}
where we use the convention $\mbn_p=\mbn_0$.
In other words, the set $S_\perp \cap \p [qS^{conv}_\parallel ]$ consists of the vertices of $qS^{conv}_\parallel$ and the points that equally divide each side of $\p [qS^{conv}_\parallel ]$ into $q$ pieces.
\end{lem}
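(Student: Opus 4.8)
The plan is to recover the outermost $N$-layer of $S_\perp$ from the transport equation~\eqref{cond:Euler^h}, using linear functionals adapted to the edges of the polygon $S^{conv}_\parallel$. Since $S_\perp\subset S$ is finite and all the coefficients $u^\perp_\mbn(\cdot)$ are real-analytic, we have $u^\perp_\mbn\not\equiv 0\iff\mbn\in S_\perp$, and \eqref{cond:Euler^h} holds for every $\mbn\in\R^3$ once we agree that $u^\perp_\mbm\equiv 0$ for $\mbm\notin S_\perp$; because $\mbu^\parallel$ is independent of $t$ (by part~(i)), this is a linear, constant-coefficient system in which the mode $\mbn$ is fed only by the modes $\mbn-\mbn'$ with $\mbn'\in S_\parallel$. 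By part~(i), $S_\parallel$ is exactly the vertex set $\{\mbn_0,\dots,\mbn_{p-1}\}$ of $S^{conv}_\parallel$, and by step~(a) of its proof no point of $S_\parallel$ lies on $\p S^{conv}_\parallel$ apart from these vertices. For each edge $E_j:=[\mbn_j,\mbn_{j+1}]$ fix a linear functional $f_j$ on $P$ with $f_j\le 1$ on $S^{conv}_\parallel$ and $\{f_j=1\}\cap S^{conv}_\parallel=E_j$; then $f_j\le N$ on $P$, with equality exactly on the cone $\R_{\ge0}\cdot E_j$, and consequently $f_j(\mbn')=1$ for $\mbn'\in S_\parallel$ holds only when $\mbn'\in\{\mbn_j,\mbn_{j+1}\}$.

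As $S_\perp\subset qS^{conv}_\parallel$ and $u^\perp\not\equiv 0$, the set $M:=\{\mbn\in S_\perp:N(\mbn)=q\}=S_\perp\cap\p[qS^{conv}_\parallel]$ is non-empty. For $\mbn\in S_\perp$ we have $f_j(\mbn)\le N(\mbn)\le q$, and $f_j(\mbn)=q$ forces $N(\mbn)=q$ and $\mbn\in qE_j=[q\mbn_j,q\mbn_{j+1}]$. Extending the notation of the statement to real parameters, $\ti{\mbn}_{j,s}:=(q-s)\mbn_j+s\mbn_{j+1}$ (so $\ti{\mbn}_{j,s}\in qS^{conv}_\parallel$ iff $s\in[0,q]$, and $f_j(\ti{\mbn}_{j,s})\equiv q$), and setting $A_j:=\{s\in[0,q]:\ti{\mbn}_{j,s}\in S_\perp\}$, we obtain $M=\bigcup_j\{\ti{\mbn}_{j,s}:s\in A_j\}$ with $A_j\ne\emptyset$ precisely for those $j$ having $\max_{S_\perp}f_j=q$, at least one of which exists. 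It therefore suffices to prove, for such $j$, that $q\in\Bo{Z}$, $q\ge1$, and $A_j=\{0,1,\dots,q\}$: then $\ti{\mbn}_{j,q}=q\mbn_{j+1}\in S_\perp$ forces $\max_{S_\perp}f_{j+1}=q$ (as $\mbn_{j+1}$ is an endpoint of $E_{j+1}$), so the conclusion propagates cyclically to every $j$, giving $M=\bigcup_j\{\ti{\mbn}_{j,k}:k=0,\dots,q\}$, which is the asserted set after identifying $\ti{\mbn}_{j,q}=\ti{\mbn}_{j+1,0}$.

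Fix $j$ with $\max_{S_\perp}f_j=q$. Because $\mbe^\parallel_j\perp\mbn_j$ and $\mbn_j,\mbn_{j+1}$ are linearly independent, a direct computation gives $\mbu^\parallel_{\mbn_j}\cdot\ti{\mbn}_{j,s}=c_j\,s$ and $\mbu^\parallel_{\mbn_{j+1}}\cdot\ti{\mbn}_{j,s}=c'_{j+1}(q-s)$ for some $c_j,c'_{j+1}\in\Bo{C}\setminus\{0\}$. Now apply \eqref{cond:Euler^h} at the frequency $\ti{\mbn}_{j,s}+\mbn_j$: it has $f_j$-value $q+1>q=\max_{S_\perp}f_j$, so it lies outside $S_\perp$; and if $\mbn'\in S_\parallel$ feeds it via some $\ti{\mbn}\in S_\perp$ then $f_j(\mbn')=q+1-f_j(\ti{\mbn})\ge1$, hence $\mbn'\in\{\mbn_j,\mbn_{j+1}\}$ and $\ti{\mbn}\in\{\ti{\mbn}_{j,s},\ti{\mbn}_{j,s-1}\}$. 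This yields the relation $c_j\,s\,u^\perp_{\ti{\mbn}_{j,s}}+c'_{j+1}(q-s+1)\,u^\perp_{\ti{\mbn}_{j,s-1}}\equiv0$ on $I$, and, treating $\ti{\mbn}_{j,s}+\mbn_{j+1}$ in the same way, $c'_{j+1}(q-s)\,u^\perp_{\ti{\mbn}_{j,s}}+c_j(s+1)\,u^\perp_{\ti{\mbn}_{j,s+1}}\equiv0$ on $I$. From these two recursions, together with the fact that $\ti{\mbn}_{j,s}\notin S_\perp$ for $s\notin[0,q]$, a short bookkeeping argument shows that $A_j$ meets neither $(0,1)$ nor $(q-1,q)$, is closed under $s\mapsto s-1$ for $s\ge1$ and under $s\mapsto s+1$ for $s+1\le q$, and contains $0$; this leaves no possibility other than $q\in\Bo{Z}$, $q\ge1$, and $A_j=\{0,1,\dots,q\}$.

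The only genuinely delicate point is the recursion step of the third paragraph, which rests on two things: (a) that a frequency with $f_j$-value $q+1$ can be fed only through the two edge-endpoints $\mbn_j,\mbn_{j+1}$ --- this is exactly where the absence of non-vertex points of $S_\parallel$ on $\p S^{conv}_\parallel$, established in part~(i), is used --- and (b) the precise form of the scalars $\mbu^\parallel_{\mbn_j}\cdot\ti{\mbn}_{j,s}$ and $\mbu^\parallel_{\mbn_{j+1}}\cdot\ti{\mbn}_{j,s}$, affine in $s$ and vanishing respectively at $s=0$ and $s=q$, which is what turns the recursions into both the integrality of $q$ and the exact description of $A_j$. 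The remainder is elementary convex geometry of $S^{conv}_\parallel$ and routine bookkeeping.
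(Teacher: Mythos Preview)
Your proof is correct and follows essentially the same strategy as the paper: evaluate the transport equation \eqref{cond:Euler^h} at frequencies of $f_j$-value $q+1$ to obtain a two-term recursion along each edge $qE_j$, then use that points with parameter $s\notin[0,q]$ lie outside $qS^{conv}_\parallel$ to force integrality of $q$ and the exact description of the boundary layer. The only differences are cosmetic --- you organize the argument via the sets $A_j$ and write both recursions explicitly, whereas the paper pushes in one direction from an arbitrary boundary point toward the next vertex and then iterates around the polygon --- but the underlying mechanism is identical.
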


\begin{proof}
We shall show $S_\perp (t_0)\cap \p [qS^{conv}_\parallel ]=\{ \ti{\mbn}_{j,k}\}$ for any $t_0\in I$ such that $S_\perp (t_0)\cap \p [qS^{conv}_\parallel ]\neq \emptyset$ (such a time $t_0$ exists by the definition of $q$).
Pick up any $\ti{\mbn}\in S_\perp (t_0)\cap \p [qS^{conv}_\parallel ]$, then $\ti{\mbn}$ can be written as $\ti{\mbn}=q\{ (1-\th )\mbn_{j}+\th \mbn_{j+1}\}$ with $j\in \{ 0,1,\dots ,p-1\}$ and $\th \in [0,1)$ in a unique way.

Since $\mbu^\parallel_{\mbn_{j+1}}\neq 0$, and $\mbn_{j+1}$, $\ti{\mbn}$ are linearly independent, the interaction between $\mbn_{j+1}$ and $\ti{\mbn}$ gives non-zero contribution at $\mbn_{j+1}+\ti{\mbn}$; i.e., $(\mbu^\parallel_{\mbn_{j+1}}\cdot \ti{\mbn} )u^\perp _{\ti{\mbn}}(t_0)\neq 0$.
On the other hand, since $N$ coincides with a linear functional $f$ on $P$ in the sectorial region $\{ s\mbn_{j}+t\mbn_{j+1}:s,t\ge 0\}$, we have $N(\mbn_{j+1}+\ti{\mbn})=1+q>q$, and thus $\mbn_{j+1}+\ti{\mbn}\not\in S_\perp$ by the definition of $q$. 
Hence, from the equation \eqref{cond:Euler^h} there must be $(\mbn',\ti{\mbn}')\in S_\parallel \times S_\perp (t_0)$, which is different from $(\mbn_{j+1},\ti{\mbn})$, such that $\mbn'+\ti{\mbn}'=\mbn_{j+1}+\ti{\mbn}$ and $(\mbu_{\mbn'}^\parallel \cdot \ti{\mbn}')u^\perp _{\ti{\mbn}'}(t_0)\neq 0$.
The only possible one is $(\mbn',\ti{\mbn}')=(\mbn_j,\ti{\mbn}+(\mbn_{j+1}-\mbn_j))$; otherwise, $f(\mbn')<f(\mbn_{j+1})$ and then $f(\mbn'+\ti{\mbn}')=f(\mbn')+f(\ti{\mbn}')<f(\mbn_{j+1})+q=f(\mbn_{j+1}+\ti{\mbn})$, which is a contradiction.
As a consequence, the point $\ti{\mbn}+(\mbn_{j+1}-\mbn_j)$ must be in $S_\perp (t_0)$.
It then must hold that $\ti{\mbn}+r(\mbn_{j+1}-\mbn_j)=q\mbn_{j+1}$ for some positive integer $r$, since otherwise the above procedure could be repeated to create a point in $S_\perp (t_0)$ outside the polygon $qS_\parallel ^{conv}$, contradicting the definition of $q$.
Now, we have $q\mbn_{j+1}\in S_\perp (t_0)\cap \p [qS_\parallel ^{conv}]$, so setting $\ti{\mbn}=q\mbn_{j+1}$ and repeating this argument to conclude that $q$ is an integer and the complete list of the points in $S_\perp (t_0)\cap \p [qS_\parallel ^{conv}]$ is $\{ \ti{\mbn}_{j,k}\} _{0\le j\le p-1,\,0\le k\le a-1}$.
\end{proof}

Our next claim is as follows:
\begin{lem}\label{lem:planar2}
Assume $u^\perp \not\equiv 0$, and let $q>0$ be an integer defined in Lemma~\ref{lem:planar1}.
Then, there exists a (unique) constant $\be _q\in \R \setminus \{ 0\}$ such that the function $u^\perp (t,\mbx )-\be _q\big\{ \om (\mbx )^q-\big\langle \om ^q\big\rangle \big\}$ is another (real-valued, mean-zero) solution of the same equation as \eqref{cond:Euler^h} for $u^\perp$ and its Fourier support is contained in $(q-1)S_\parallel ^{conv}$.
In particular, $u^\perp (t,\mbx )=\beta _1 \om (\mbx )$ when $q=1$.
\end{lem}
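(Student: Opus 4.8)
The plan is to reduce the lemma to a statement about the ``top Fourier layer'' of $u^\perp$. First note that $\om$, and hence every polynomial of $\om$, is annihilated by the transport operator $\mbu^\parallel\cdot\nabla$: writing $\mbu^\parallel=\sum_j\al_j\mbe^\parallel_je^{i\mbn_j\cdot\mbx}$ and $\om=\sum_j i\al_je^{i\mbn_j\cdot\mbx}$ as in the proof of (i), the $e^{i\mbn\cdot\mbx}$-coefficient of $(\mbu^\parallel\cdot\nabla)\om$ is $-\sum_{\mbn_j+\mbn_k=\mbn}\al_j\al_k(\mbe^\parallel_j\cdot\mbn_k)$, which cancels in pairs since $\mbe^\parallel_j\cdot\mbn_k=\la^{-1}\mbe^\perp\cdot(\mbn_j\gaiseki\mbn_k)$ is antisymmetric in $(j,k)$ (equivalently, the scalar vorticity of a stationary planar flow is transported). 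Consequently $\om^q-\langle\om^q\rangle$ is itself a stationary, real-valued, mean-zero solution of \eqref{cond:Euler^h}, so by linearity it suffices to produce a real constant $\be_q\neq0$ for which $u^\perp-\be_q(\om^q-\langle\om^q\rangle)$ carries no Fourier mode on the layer $\{\mbn:N(\mbn)=q\}$; and by Lemma~\ref{lem:planar1} that layer, both for $u^\perp$ and for $\om^q$, equals the explicit finite set $\{\ti\mbn_{j,k}\}$.

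To find $\be_q$, I would test \eqref{cond:Euler^h} against the frequencies $\mbn=\mbn_{j+1}+\ti\mbn_{j,k}=(q-k)\mbn_j+(k+1)\mbn_{j+1}$, $0\le k\le q-1$, for a fixed edge $[\mbn_j,\mbn_{j+1}]$ of $qS_\parallel^{conv}$. Each such $\mbn$ lies in the open cone over that edge and has $N(\mbn)=q+1$, so $u^\perp_\mbn\equiv0$; moreover any splitting $\mbn=\mbn'+\ti\mbn$ with $\mbn'\in S_\parallel$, $\ti\mbn\in S_\perp$ is forced to be extremal, so by Lemma~\ref{lem:planar1} one has $\mbn'\in\{\mbn_j,\mbn_{j+1}\}$ and $\ti\mbn\in\{\ti\mbn_{j,0},\dots,\ti\mbn_{j,q}\}$. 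Hence \eqref{cond:Euler^h} at $\mbn$ collapses to the two-term recursion
\[ (\mbu^\parallel_{\mbn_j}\cdot\ti\mbn_{j,k+1})\,u^\perp_{\ti\mbn_{j,k+1}}(t)+(\mbu^\parallel_{\mbn_{j+1}}\cdot\ti\mbn_{j,k})\,u^\perp_{\ti\mbn_{j,k}}(t)=0, \]
whose coefficients are nonzero multiples of $\mbe^\perp\cdot(\mbn_j\gaiseki\mbn_{j+1})\neq0$ (using linear independence of consecutive vertices and $\mbe^\parallel_j\perp\mbn_j$). Solving it along each edge and matching at the common vertices $q\mbn_{j+1}$ of consecutive edges gives $u^\perp_{\ti\mbn_{j,k}}(t)=\be_q(t)\,[\om^q]_{\ti\mbn_{j,k}}$, where $[\om^q]_{\ti\mbn_{j,k}}$ is the Fourier coefficient of $\om^q$ at $\ti\mbn_{j,k}$ (nonzero, as all $\al_l\neq0$) and $\be_q(t)=u^\perp_{q\mbn_j}(t)/(i^q\al_j^q)$ is independent of $j$. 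Real-valuedness of $u^\perp$ and $\om^q$ forces $\be_q(t)\in\R$, and $\be_q\not\equiv0$ since $q$ is by definition the largest value of $N$ attained on $S_\perp$.

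It remains to show that $\be_q$ is independent of $t$, and then to conclude. For this I would evaluate \eqref{cond:Euler^h} at the vertex $\mbn=q\mbn_j$: the left-hand side is $i^q\al_j^q\,\dot\be_q(t)$, while on the right-hand side the only level-$(q-1)$ frequency feeding $q\mbn_j$ is $(q-1)\mbn_j$, whose coefficient drops out because $\mbu^\parallel_{\mbn_j}\perp\mbn_j$; using the identities above, the right-hand side therefore equals $-\be_q(t)$ times the $q\mbn_j$-Fourier coefficient of $(\mbu^\parallel\cdot\nabla)\om^q$, which vanishes. Hence $\dot\be_q\equiv0$. With $\be_q$ a nonzero real constant, $v:=u^\perp-\be_q(\om^q-\langle\om^q\rangle)$ is a real-valued, mean-zero solution of \eqref{cond:Euler^h} supported in $qS_\parallel^{conv}$ whose layer $\{N=q\}$ is empty by construction; if $v\not\equiv0$, then Lemma~\ref{lem:planar1} applied to $v$ forces its top level to be a positive integer, necessarily $\le q-1$, so $v$ is supported in $(q-1)S_\parallel^{conv}$. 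Uniqueness of $\be_q$ is immediate, and for $q=1$ the field $v$ is mean-zero with Fourier support in $\{\mbz\}$, hence $v\equiv0$ and $u^\perp=\be_1\om$.

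The step I expect to be the real obstacle is the constancy of $\be_q$: the edge recursion only forces proportionality pointwise in $t$, and closing it requires combining the vertex equation with the transport-invariance of $\om^q$. One must also take care to apply Lemma~\ref{lem:planar1} to $v$ only after $\be_q$ has been shown constant --- otherwise $v$ need not solve \eqref{cond:Euler^h}; the remaining ingredients, namely the non-vanishing of the recursion coefficients and the identification of which frequencies feed into a given one near $\p[qS_\parallel^{conv}]$, are routine given linear independence of consecutive vertices and Lemma~\ref{lem:planar1}.
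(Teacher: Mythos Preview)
Your strategy matches the paper's and most of the pieces are correct: the transport-invariance of $\om^q$, the two-term edge recursion, the explicit solution $u^\perp_{\ti\mbn_{j,k}}(t)=\be_q(t)[\om^q]_{\ti\mbn_{j,k}}$, and the real-valuedness of $\be_q(t)$ are all as in the paper. The gap is in your argument for $\dot\be_q=0$.

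When you evaluate \eqref{cond:Euler^h} at $q\mbn_j$, the transport term involves all pairs $(\mbn_l,\,q\mbn_j-\mbn_l)$ with $q\mbn_j-\mbn_l\in S_\perp$. You treat only the level-$q$ contributions (where $u^\perp=\be_q(t)\om^q$ holds) and the single pair $(\mbn_j,(q-1)\mbn_j)$ at level $q-1$. But for $l\neq j$ one only knows $N(q\mbn_j-\mbn_l)>q-1$; it can happen that $N(q\mbn_j-\mbn_l)\in(q-1,q)$. For instance, with $S_\parallel$ the symmetric hexagon $\{(\pm1,0),(\pm a,\pm b)\}$ on the unit circle ($a^2+b^2=1$, $a\in(1/2,1)$) and $q=2$, one computes $N(2\mbn_0-\mbn_1)=3-2a\in(1,2)$. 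At this stage you have no control over $u^\perp$ at such intermediate levels, so you cannot identify the full transport term with $\be_q(t)\,[(\mbu^\parallel\!\cdot\!\nabla)\om^q]_{q\mbn_j}$, and the conclusion $\dot\be_q=0$ does not follow.

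The paper resolves this by reversing your last two steps. It sets $\zeta:=u^\perp-\be_q(t)\{\om^q-\langle\om^q\rangle\}$ with the \emph{time-dependent} $\be_q(t)$, so that $\zeta$ solves the forced equation $\p_t\zeta+(\mbu^\parallel\!\cdot\!\nabla)\zeta=-\be_q'(t)\{\om^q-\langle\om^q\rangle\}$. Since the forcing is supported in $\{N\le q\}$, the Lemma~\ref{lem:planar1} argument (which looks at frequencies of level $\ti q+1>q$) still applies to $\zeta$ and forces $\max_{S'}N$ to be an integer, hence $\le q-1$. \emph{Only then} does the vertex equation at $q\mbn_0$ become clean: with $S'\subset(q-1)S_\parallel^{conv}$ the sole pair feeding $q\mbn_0$ from $\zeta$ is $(\mbn_0,(q-1)\mbn_0)$, which drops out, yielding $\be_q'=0$. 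Your instinct that Lemma~\ref{lem:planar1} must wait until $\be_q$ is constant is what misleads you here; the point is that the forced equation for $\zeta$ is already good enough for the Lemma~\ref{lem:planar1} argument.
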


\begin{proof}
From the argument in the proof of Lemma~\ref{lem:planar1}, we deduce that
\eqq{\big( \mbu^\parallel_{\mbn_{j+1}}\cdot \ti{\mbn}_{j,k}\big) u^\perp _{\ti{\mbn}_{j,k}}(t) +\big( \mbu^\parallel _{\mbn_{j}}\cdot \ti{\mbn}_{j,k+1}\big) u^\perp _{\ti{\mbn}_{j,k+1}}(t)=0,\qquad 0\!\le \!j\!\le\! p\!-\!1,\hx 0\!\le \!k\!\le\! q\!-\!1,\hx t\in I,}
where we have used the convention $\ti{\mbn}_{j,q}=\ti{\mbn}_{j+1,0}$, $\ti{\mbn}_{p,0}=\ti{\mbn}_{0,0}$.
(More precisely, this equality has been verified for $t\in I$ satisfying $S_\perp (t)\cap \p [qS_\parallel ^{conv}]\neq \emptyset$, while otherwise it holds trivially.)
Substituting $\mbu^\parallel _{\mbn_j}=\alpha_j\mbe^\parallel_j$ and noticing that $\mbe^\parallel _j \cdot \mbn_{j+1}=-\mbe^\parallel_{j+1}\cdot \mbn_j\neq 0$, we have
\eqq{u^\perp _{\ti{\mbn}_{j,k+1}}(t)=\frac{(q-k)\al _{j+1}}{(k+1)\al _j}u^\perp _{\ti{\mbn}_{j,k}}(t) ,\qquad 0\!\le \!j\!\le\! p\!-\!1,\hx 0\!\le \!k\!\le\! q\!-\!1,\hx t\in I.}
For each $t\in I$, define the complex number $\be _q(t)$ by $u^\perp _{\ti{\mbn}_{0,0}}(t)=(i\al _0)^q\be _q(t)$.
(The map $t\mapsto \be _q(t)$ is smooth, as $u^\perp_{\ti{\mbn}_{0,0}}(t)$ is smooth.)
Then, all of $u^\perp _{\ti{\mbn}_{j,k}}$ is determined by the above relation as 
\eqq{u^\perp_{\ti{\mbn}_{j,k}}(t) =\left( \begin{matrix} q \\ k \end{matrix}\right) (i\al _j)^{q-k}(i\al _{j+1})^k\be _q(t),\qquad 0\!\le \!j\!\le\! p\!-\!1,\hx 0\!\le \!k\!\le\! q\!-\!1,\hx t\in I.}
Since $u^\perp (t,\mbx )$ is real-valued, $\ti{\mbn}_{0,0}=-\ti{\mbn}_{p/2,0}$ and $i\alpha_{p/2}=\overline{i\alpha_0}$, we can show that $\beta _q(t)\in \R$:
\[ \overline{\beta_q(t)}=\overline{\Big( \frac{u^\perp _{\ti{\mbn}_{0,0}}(t)}{(i\al_0)^q}\Big)}=\frac{u^\perp_{\ti{\mbn}_{p/2,0}}(t)}{(i\al_{p/2})^q}=\beta_q(t).\]
On the other hand, from the Fourier representation of $\om$ we see that
\eqq{\om (\mbx )^q=\sum _{j=0}^{p-1}\sum _{k=0}^{q-1}\left( \begin{matrix} q \\ k \end{matrix}\right) (i\al _j)^{q-k}(i\al _{j+1})^ke^{i\ti{\mbn}_{j,k}\cdot \mbx}+\eta _q(\mbx )}
for some function $\eta _q$ on $\R^3$ whose Fourier support is finite and contained in $\{ \mbn \in P:N(\mbn )<q\}$ (the interior of $qS_\parallel ^{conv}$).
Therefore, the Fourier support of the (real-valued, mean-zero) function $\zeta (t,\mbx ):=u^\perp (t,\mbx )-\beta _q(t)\big\{ \om (\mbx )^q-\big\langle \om ^q\big\rangle \big\}$, denoted by $S'$, is also contained in $\{ \mbn :N(\mbn )<q\}$.

We next claim that $S'\subset (q-1)S_\parallel ^{conv}$.
Observe that
\eq{eq:polyom}{(\mbu^\parallel(\mbx)\cdot \nabla )\big\{ \om (\mbx )^q-\big\langle \om ^q\big\rangle \big\}=q\om (\mbx )^{q-1}\sum _{j,l=0}^{p-1}(\alpha _j\mbe^\parallel _j)\cdot (-\alpha_l\mbn_l)e^{i(\mbn_j+\mbn_l)\cdot \mbx}=0,}
since $\mbe^\parallel _j\cdot \mbn_l=-\mbe^\parallel_l\cdot \mbn_j$.
Hence, by the equation \eqref{cond:Euler^h}, $\zeta$ solves
\eq{eq:zeta}{\p _t\zeta (t,\mbx )+(\mbu^\parallel (\mbx )\cdot \nabla )\zeta (t,\mbx )=-\beta '_q(t)\big\{ \om (\mbx )^q-\big\langle \om^q\big\rangle \big\} .}
Suppose for contradiction that $\ti{q}:=\max \{ N(\mbn ):\mbn \in S'\} >q-1$.
Then, noticing that the Fourier support of $\p _t\zeta (t)$ and that of the right-hand side of \eqref{eq:zeta} are contained in $\{ \mbn :N(\mbn )\leq q\}$, the same argument as for Lemma~\ref{lem:planar1} would imply that $\ti{q}$ is an integer, contradicting $\ti{q}<q$ which we have shown above.
As a consequence, in the case $q=1$ we have $\ze \equiv 0$ since $\zeta$ is mean-zero.

Now, we have only to show that $\be _q(t)$ is independent of $t\in I$ (which implies $\beta _q\neq 0$).
When $q=1$, we immediately obtain $\beta'_q(t)=0$ from the equation \eqref{eq:zeta}.
We assume $q>1$ and compare the Fourier coefficient of both sides of \eqref{eq:zeta} at $\ti{\mbn}_{0,0}$:
\eqq{i\sum _{\mat{(\mbn',\ti{\mbn})\in S_\parallel \times S'\\ \mbn'+\ti{\mbn}=\ti{\mbn}_{0,0}}}(\mbu^\parallel _{\mbn'}\cdot \ti{\mbn})\zeta _{\ti{\mbn}}(t)=-(i\al _0)^q\beta '_q(t).}
Observe that the only possible pair $(\mbn',\ti{\mbn})\in S_\parallel \times S'$ satisfying $\mbn'+\ti{\mbn}=\ti{\mbn}_{0,0}=q\mbn_0$ is $(\mbn',\ti{\mbn})=(\mbn_0,(q-1)\mbn_0)$:
In fact, noticing $S'\subset (q-1)S_\parallel ^{conv}$, this can be verified by a simple argument using a linear functional $g$ on $P$ satisfying $g(\mbn_0)=1$ and $g<1$ on $S^{conv}_\parallel \setminus \{ \mbn_0\}$.
Then, since these frequencies do not interact (as they are linearly dependent), the left-hand side of the above equality is zero, and so $\be '_q(t)=0$.
\end{proof}

By applying Lemmas \ref{lem:planar1} and \ref{lem:planar2} repeatedly (at most $q$ times), we obtain the (unique) polynomial $Q$ of degree $q$ with real coefficients and without the constant term such that  $u^\perp (t,\mbx )=Q(\om (\mbx ))-\big\langle Q(\om ) \big\rangle$.
We have thus proved (ii) and that $u^\perp$ is independent of $t$.
\end{proof}

\begin{proof}[Proof of the converse]
We note that any flow $\mbu^\parallel (\mbx )\in \mathcal{H}$ with Fourier modes on $P$ and parallel to $P$ everywhere is represented as \eqref{rep:horizontal}.
Since the Fourier modes of $\mbu^\parallel$ are equidistant from the origin, we see by Lemma~\ref{lem:two} that $\mbu^\parallel$ satisfies the equation \eqref{cond:parallel}.
It then suffices to prove that $u^\perp (\mbx ):=Q(\om (\mbx ))-\big\langle Q(\om )\big\rangle$ solves the equation \eqref{cond:Euler^h}, which follows from the calculation \eqref{eq:polyom}.

This concludes the proof of Proposition~\ref{prop:planar}.
\end{proof}


\section{Characterization in the 3D case}\label{sec:S}

In this section, we consider the case where the Fourier support $S$ has three linearly independent vectors.
By exploiting the characterization of two non-interacting frequencies given in Proposition~\ref{prop:two}, we shall prove:
\begin{thm}\label{thm:main2}
Any solution $\mathbf{u}\in \Sc{H}_I$ of \eqref{cond:Euler} is a (stationary) Beltrami flow when its Fourier support is not contained in a plane.
\end{thm}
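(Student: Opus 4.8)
The plan is to run a three-dimensional version of the convex-geometric argument behind Theorem~\ref{thm:2D}, but to extract from it not only the shape of the Fourier support $S$ but also --- via the rotation operator of Proposition~\ref{prop:two} together with the Gauss--Bonnet theorem --- the polarization condition of Lemma~\ref{lem:Beltrami}(ii). Let $S^{conv}$ be the convex hull of $S$. Since $S$ is symmetric and not contained in a plane, $S^{conv}$ is a full-dimensional centrally symmetric polytope; hence $\mbz\in\operatorname{int}S^{conv}$, it has at least six facets, and its $1$-skeleton is connected. Projecting the facets radially onto $\Stwo$ gives a tiling of $\Stwo$ in which each facet $F$ corresponds to a geodesic polygon enclosing area $\Theta_F\in(0,2\pi)$ (its radial image lies in a spherical cap strictly smaller than a hemisphere, because the affine hull of $F$ misses the origin), and $\sum_F\Theta_F=4\pi$. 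In particular some facet $F_0$ has $\Theta_{F_0}\notin\pi\Bo{Z}$: otherwise every $\Theta_F$ would equal $\pi$, and $\sum_F\Theta_F=4\pi$ would force exactly four facets. I fix such an $F_0$, and fix $t\in I\setminus I_0$, so that the Fourier support of $\mathbf{u}(t,\cdot)$ is exactly $S$.

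First I would establish, by arguments parallel to Step~1 of the $2$D proof but with Proposition~\ref{prop:two} in place of Step~0: (a) $S$ has no point on $\p S^{conv}$ other than the vertices of $S^{conv}$; and (b) for every edge $\{\mbn,\mbm\}$ of $S^{conv}$, the modes $\mathbf{u}_\mbn(t)e^{i\mbn\cdot\mbx}$ and $\mathbf{u}_\mbm(t)e^{i\mbm\cdot\mbx}$ do not interact. Here (b) is an easy consequence of (a), since $\tfrac{\mbn+\mbm}{2}$ lies in the relative interior of the edge: any $\mbn',\mbm'\in S$ with $\mbn'+\mbm'=\mbn+\mbm$ must then lie on that edge and so equal $\{\mbn,\mbm\}$, while $\mbn+\mbm\notin S$ because $\tfrac{\mbn+\mbm}{2}\in\p S^{conv}$. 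As the endpoints of any edge are linearly independent (an edge lies in $\p S^{conv}$ and so does not contain the interior point $\mbz$, hence is not collinear with the origin), case (i) of Proposition~\ref{prop:two} is excluded and we are in case (ii) or (iii). The key observation is that in \emph{both} cases the edge relation
\[ \mathbf{u}_\mbm(t)=\ga_{\mbn\mbm}(t)\,\Sc{R}_{\hat{\mbn}\mapsto\hat{\mbm}}\mathbf{u}_\mbn(t),\qquad \ga_{\mbn\mbm}(t)\in\Bo{C}\setminus\{0\}, \]
holds: this is the assertion itself in case (iii), and in case (ii) both $\mathbf{u}_\mbn(t)$ and $\mathbf{u}_\mbm(t)$ are complex multiples of the common unit normal $\mbe^\perp$ to the plane of $\mbn$ and $\mbm$, which is the rotation axis of $\Sc{R}_{\hat{\mbn}\mapsto\hat{\mbm}}$ and hence fixed by it.

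The core of the proof is then a holonomy argument. Composing the edge relations around $F_0$, with vertices $\mbn_1,\dots,\mbn_k$ in cyclic order, gives $\mathbf{u}_{\mbn_1}(t)=\ga_{F_0}(t)\,\Sc{R}_{F_0}\mathbf{u}_{\mbn_1}(t)$ with $\ga_{F_0}(t)\in\Bo{C}\setminus\{0\}$, where $\Sc{R}_{F_0}:=\Sc{R}_{\hat{\mbn}_k\mapsto\hat{\mbn}_1}\circ\cdots\circ\Sc{R}_{\hat{\mbn}_1\mapsto\hat{\mbn}_2}$ is the parallel transport of $\Stwo$ around the geodesic polygon $\hat{\mbn}_1\cdots\hat{\mbn}_k$ bounding the radial image of $F_0$; by Gauss--Bonnet it fixes $\hat{\mbn}_1$ and acts on $T_{\hat{\mbn}_1}\Stwo$ as the rotation by $\pm\Theta_{F_0}$. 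Since $\mathbf{u}_{\mbn_1}(t)$ is a nonzero vector of $T_{\hat{\mbn}_1}\Stwo\otimes\Bo{C}$, it is an eigenvector of this rotation with eigenvalue $\ga_{F_0}(t)^{-1}=e^{\pm i\Theta_{F_0}}\neq\pm1$; as the two eigenlines consist precisely of the circularly polarized vectors (real and imaginary parts orthogonal and of equal length), with the sign of the exponent fixing the handedness of $(\mbn_1,\Re\mathbf{u}_{\mbn_1}(t),\Im\mathbf{u}_{\mbn_1}(t))$, the vector $\mathbf{u}_{\mbn_1}(t)$ satisfies Lemma~\ref{lem:Beltrami}(ii), and likewise at every vertex of $F_0$. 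Because each $\Sc{R}_{\hat{\mbn}\mapsto\hat{\mbm}}$ is an orientation-preserving rotation taking $\hat{\mbn}$ to $\hat{\mbm}$, the edge relation propagates this condition together with its handedness along every edge; by connectedness of the $1$-skeleton, $\mathbf{u}_\mbn(t)$ is circularly polarized with one fixed handedness at \emph{every} vertex of $S^{conv}$ (and, incidentally, every edge is in case (iii)). Plugging this into Lemma~\ref{lem:two} shows that two such modes cancel iff their frequencies have equal length, and applying it to the non-interacting adjacent vertices of (b) --- using connectedness once more --- puts all vertices on a common sphere $|\mbx|=\la$; consequently no two vertices interact. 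Finally, were $S$ to meet $\operatorname{int}S^{conv}$, I would take a point $\mbm$ there of maximal length and a suitable adjacent vertex $\mbn$, and argue as in Step~3 of the $2$D proof that $\mbn+\mbm\notin S$ and that all other pairs summing to it consist of two vertices --- which by the above contribute nothing --- so that $(\mbn,\mbm)$ would itself be non-interacting; this is impossible, since the output of the interaction of $\mbn$ (length $\la$, circularly polarized) with $\mbm$ (length $<\la$) is nonzero by Lemma~\ref{lem:two}. Thus $S$ is the vertex set of $S^{conv}$, lies on $|\mbx|=\la$, and $\mathbf{u}(t,\cdot)$ is a Beltrami flow by Lemma~\ref{lem:Beltrami}; then $(\mathbf{u}\cdot\nabla)\mathbf{u}=\nabla(|\mathbf{u}|^2/2)$ for such $t$, so \eqref{cond:Euler} reduces to $\p_t\mathbf{u}_\mbn=\mbz$, which by continuity holds for all $t\in I$. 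Hence $\mathbf{u}$ is a stationary Beltrami flow.

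The step I expect to be the real obstacle is (a)--(b): transplanting the elementary convex analysis from the sides of a polygon to the facets of $S^{conv}$ while simultaneously controlling the coefficient vectors, because --- unlike in $2$D --- non-interaction of two modes is not a condition on the frequencies alone, and the ``perpendicular'' alternative (ii) of Proposition~\ref{prop:two} must be tracked throughout. (It cannot persist along every edge of $F_0$, since a nonzero vector cannot be orthogonal to all of $\R^3$ and the neighbours of a vertex of a $3$-polytope are not all coplanar with it and the origin; and in any case the uniform edge relation noted above lets one sidestep this issue in the holonomy step.) A secondary point needing care is the exact identification of the composite $\Sc{R}_{F_0}$ with the sphere's holonomy (orientation conventions included) and the interior-point elimination, which adapts Step~3 of the $2$D argument using the new fact that no two vertices of $S^{conv}$ interact.
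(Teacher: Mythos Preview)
Your strategy is essentially the paper's, and your streamlining --- doing a single holonomy argument on the full coefficient vectors, rather than the paper's two passes (first on the line $\ell(\mbn)=\R\Re\mbu_\mbn+\R\Im\mbu_\mbn$ to exclude degenerate polarization, then on the vectors themselves to pin down $BV^\pm$) --- is valid, precisely because of your observation that the edge relation $\mbu_\mbm=\ga\,\Sc{R}_{\hat{\mbn}\mapsto\hat{\mbm}}\mbu_\mbn$ holds in \emph{both} cases (ii) and (iii) of Proposition~\ref{prop:two}. Your use of the radial projection of the facets of $S^{conv}$ itself (rather than passing to the convex hull of normalized vertices as the paper does) is also fine, and your selection of a facet with $\Theta_{F_0}\notin\pi\Bo{Z}$ is equivalent to the paper's choice of one with area $\le 2\pi/3$.

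The one genuine ordering problem is that you cannot get (a) \emph{before} the holonomy by a direct transplant of the 2D Step~1. In 2D, three collinear points on a side are excluded because consecutive SIPs force equal radii; in 3D the perpendicular alternative (ii) lets arbitrarily many collinear points coexist on an edge with no local contradiction. The paper's remedy (Lemma~\ref{lem:3-1}) is to prove (b) directly, \emph{without} assuming (a): if an edge carries $p\ge 3$ points of $S$, an induction along the edge shows that all pairs among them fall under case~(ii), so in particular the two endpoint vertices still satisfy the edge relation. Thus (b) holds for adjacent vertices regardless of (a), the holonomy runs, circular polarization follows at every vertex, and only \emph{then} does (a) drop out (case~(ii) is now impossible at a vertex, so every edge has exactly two points; face-interior and interior points are then eliminated together by the two-level Minkowski-functional argument of Proposition~\ref{prop:interior}, which is the 3D analogue of Step~3 that you sketch). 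So: swap the dependence --- prove (b) via the edge induction first, then run your holonomy, then deduce (a) --- and your proof goes through.
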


We introduce some terminology to be used frequently in the proof of Theorem~\ref{thm:main2}:
\begin{defn}
(i) Let $S$ be a finite subset of $\R^3\setminus \{ \mbz \}$.
We call a pair of two distinct points $\mbn_1,\mbn_2\in S$ \emph{simply interacting pair} in $S$ (SIP for short) if the following conditions hold:
\begin{itemize}
\item $\mbn_1+\mbn_2\not\in S$.
\item If two distinct points $\mbn_3,\mbn_4\in S$ satisfy $\mbn_1+\mbn_2=\mbn_3+\mbn_4$, then $\{ \mbn_1,\mbn_2\}=\{ \mbn_3,\mbn_4\}$.
\end{itemize} 

(ii) Let $\mbn\in \R ^3\setminus \{ \mbz \}$.
We call a vector $\mathbf{u}_\mbn\in \Bo{C}^3\setminus \{ \mbz \}$ \emph{positive (resp. negative) Beltrami vector} at $\mbn$ ($BV^\pm$ for short) if it is an eigenvector of $i\mbn \times$ with respect to the eigenvalue $+|\mbn |$ (resp. $-|\mbn |$), or equivalently (by Lemma~\ref{lem:Beltrami}), if the following conditions hold:
\begin{itemize}
\item $\Re \mathbf{u}_\mbn \cdot \mbn=\Im \mathbf{u}_\mbn \cdot \mbn=\Re \mathbf{u}_\mbn \cdot \Im \mathbf{u}_\mbn=0$,\hx $|\Re \mathbf{u}_\mbn|=|\Im \mathbf{u}_\mbn|$.
\item $(\mbn,\Re \mathbf{u}_\mbn,\Im \mathbf{u}_\mbn)$ is a right-handed (resp. left-handed) system.
\end{itemize}
\end{defn}

\begin{rem}\label{rem:SIP}
(i) When $\mathbf{u}(t,\mbx)=\sum _{\mbn\in S}\mathbf{u}_\mbn(t)e^{i\mbn\cdot \mbx}\in \Sc{H}_I$ is a solution to \eqref{Euler} on $I\times \R ^3$, the coefficient vectors $\mathbf{u}_1=\mathbf{u}_{\mbn_1}$, $\mathbf{u}_2=\mathbf{u}_{\mbn_2}$ at an SIP of two frequencies $\mbn_1,\mbn_2\in S$ always satisfy \eqref{cond1}, due to \eqref{cond:Euler} with $\mbn=\mbn_1+\mbn_2$.
Note that the converse is not necessarily true; namely, \eqref{cond1} may be true even for non-SIP frequencies.

(ii) We note that SIP is not a transitive relation.
For instance, when $S$ contains four points $\mbn_1,\mbn_2,\mbn_3,\mbn_4$ that form a parallelogram in this order (i.e., $\mbn_1+\mbn_3=\mbn_2+\mbn_4$), the pairs $(\mbn_1,\mbn_2)$, $(\mbn_2,\mbn_3)$, $(\mbn_3,\mbn_4)$, $(\mbn_4,\mbn_1)$ can be SIP but not are $(\mbn_1,\mbn_3)$ and $(\mbn_2,\mbn_4)$.

(iii)
From Lemma~\ref{lem:Beltrami}, $\mathbf{u}(\mbx)=\sum _{\mbn\in S}\mathbf{u}_\mbn e^{i\mbn\cdot \mbx}\in \Sc{H}$ is a Beltrami flow if and only if $S\subset \lambda \Stwo$ for some $\lambda >0$ and the coefficient vectors are all $BV^+$ or all $BV^-$ (corresponding to the eigenvalue $\la$ or $-\la$, respectively).

(iv) It is easy to see that for any $\mbn \in \mathbb{R}^3\setminus \{ \mbz \}$, each of $BV^+$ and $BV^-$ at $\mbn$ is invariant under multiplication by non-zero complex number.
Moreover, when $\mbn_1,\mbn_2\in \R^3\setminus \{ \mbz \}$ are linearly independent and $|\mbn_1|=|\mbn_2|$, a vector $\mathbf{v}\in \Bo{C}^3$ is $BV^+$ (resp. $BV^-$) at $\mbn_1$ if and only if $\Sc{R}_{\hat{\mbn}_1\mapsto \hat{\mbn}_2}\mathbf{v}$ is $BV^+$ (resp. $BV^-$) at $\mbn_2$, because the geometric conditions determining $BV^\pm$ are not disrupted by the rotation $\Sc{R}_{\hat{\mbn}_1\mapsto \hat{\mbn}_2}$.
Consequently, in the situation of Proposition~\ref{prop:two}~(iii), if $\mathbf{u}_1$ is shown to be $BV^+$ (resp. $BV^-$) at $\mbn_1$, then $\mathbf{u}_2$ is also $BV^+$ (resp. $BV^-$) at $\mbn_2$.
\end{rem}

Let us begin to prove Theorem~\ref{thm:main2}.
We first give an analog of Step~1 in the 2D case:
\begin{prop}\label{prop:S1}
Let $\mathbf{u}=\sum _{\mbn\in S}\mathbf{u}_\mbn(t)e^{i\mbn\cdot \mbx}\in \Sc{H}_I$ be a solution of \eqref{cond:Euler} on $I\subset \R$ and $t_0\in I\setminus I_0$, where $I_0$ is defined in \eqref{def:I_0}.
Assume that $S$ is not contained in a plane.
Then, the following properties hold:
\begin{enumerate}
\item The polyhedron $S^{conv}$ (convex hull of $S$) is inscribed in a sphere centered at the origin.
\item There is no point of $S$ on each edge of $S^{conv}$ except for two endpoints.
\item For each edge of $S^{conv}$, its two endpoints are SIP in $S$, and (iii) of Proposition~\ref{prop:two} holds for these vertices and the associated coefficient vectors at $t=t_0$. 
\item For each vertex $\mbn$ of $S^{conv}$, $\Re \mathbf{u}_\mbn(t_0), \Im \mathbf{u}_\mbn(t_0) \in \R ^3$ are linearly independent.
\end{enumerate}
\end{prop}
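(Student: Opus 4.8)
The plan is to adapt Step~1 of the 2D argument to the polyhedral setting, using the key structural input that for an SIP the two modes must satisfy \eqref{cond1}, together with the dichotomy of Proposition~\ref{prop:two}. I would argue edge by edge. Fix an edge $E$ of $S^{conv}$ with endpoints the vertices $\mbn_1,\mbn_2$, and let $f$ be a linear functional on $\R^3$ with $f\equiv 1$ on the affine span of $E$ and $f<1$ on $S^{conv}\setminus E$ (such an $f$ exists because $E$ is a face of the polytope). List all points of $S$ on $E$ as $\mbn_1=\mbm_0,\mbm_1,\dots,\mbm_r=\mbn_2$ in order along $E$. Exactly as in the planar case (proof of Proposition~\ref{prop:planar}(i)(a)), consider the interaction between the two extreme points $\mbm_0$ and $\mbm_1$ of the sublist: since $f(\mbm_0+\mbm_1)=2$ while $f\le 1$ on $S$, we get $\mbm_0+\mbm_1\notin S$; and if $\mbn'+\mbn''=\mbm_0+\mbm_1$ with $\mbn',\mbn''\in S$, then $f(\mbn')=f(\mbn'')=1$ forces $\mbn',\mbn''\in E$, hence $\{\mbn',\mbn''\}=\{\mbm_0,\mbm_1\}$ by the ordering. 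Thus $(\mbm_0,\mbm_1)$ is an SIP, so by Remark~\ref{rem:SIP}(i) the coefficient vectors satisfy \eqref{cond1}. Since $\mbm_0,\mbm_1$ are linearly independent (both nonzero and collinear only if parallel, but $\mbz\notin S$ and they are distinct points on a segment not through $\mbz$ — here one uses that $S$ is not contained in a plane, so no edge passes through the origin), and since $\mathbf{u}_{\mbm_0}(t_0),\mathbf{u}_{\mbm_1}(t_0)\ne\mbz$, Proposition~\ref{prop:two} leaves only case (ii) or (iii). If (ii) held for every such extremal pair along $E$, the corresponding coefficient vectors would all be perpendicular to the plane spanned by $\mbm_0,\mbm_1$; but edges of $S^{conv}$ point in at least three linearly independent directions (as $S$ is not planar), and chasing this one finds a frequency forced to have its coefficient vector perpendicular to two distinct planes, i.e. zero — contradiction. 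Hence (iii) holds: $|\mbm_0|=|\mbm_1|$ and $\mathbf{u}_{\mbm_1}(t_0)=\gamma\,\Sc{R}_{\hat{\mbm}_0\mapsto\hat{\mbm}_1}\mathbf{u}_{\mbm_0}(t_0)$ for some $\gamma\ne0$.

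Next I would upgrade this to the whole edge $E$ and to sphericity. Running the same SIP argument on the extreme pair $(\mbm_{r-1},\mbm_r)$ gives $|\mbm_{r-1}|=|\mbm_r|$, so both perpendicular bisector hyperplanes of $[\mbm_0,\mbm_1]$ and of $[\mbm_{r-1},\mbm_r]$ contain $\mbz$; but these two bisector hyperplanes are distinct parallel hyperplanes (both orthogonal to the direction of $E$) unless $[\mbm_0,\mbm_1]$ and $[\mbm_{r-1},\mbm_r]$ have the same midpoint, which for collinear ordered points forces $r=1$. This proves (ii) — no interior points on $E$ — and simultaneously (i): every edge of $S^{conv}$ has its two endpoints equidistant from $\mbz$, and since the edge graph of a $3$-polytope is connected, all vertices of $S^{conv}$ lie on a common sphere centered at the origin; as $S\subset S^{conv}$ and by (ii) every boundary point of $S^{conv}$ that lies in $S$ is a vertex (any non-vertex boundary point lies in the relative interior of a face, but an edge analysis as above, extended to $2$-faces by picking the extremal pair along a chord, rules these out — this is the polyhedral analog of ``no point on $\partial S^{conv}$ other than vertices''). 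Now (iii) is exactly what we proved: for each edge the endpoints are SIP and satisfy Proposition~\ref{prop:two}(iii) at $t=t_0$.

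For (iv), suppose some vertex $\mbn$ of $S^{conv}$ had $\Re\mathbf{u}_\mbn(t_0)$ and $\Im\mathbf{u}_\mbn(t_0)$ linearly dependent; together with $\mbn\cdot\mathbf{u}_\mbn=0$ this means $\mathbf{u}_\mbn(t_0)=(a+bi)\mathbf{w}$ for a single real vector $\mathbf{w}\perp\mbn$. Pick an edge $E$ at $\mbn$ with other endpoint $\mbn'$; by (iii), $\mathbf{u}_{\mbn'}(t_0)=\gamma\,\Sc{R}_{\hat{\mbn}\mapsto\hat{\mbn}'}\mathbf{u}_\mbn(t_0)$, so $\mathbf{u}_{\mbn'}(t_0)$ is also of the form (complex scalar)$\times$(real vector), i.e. has dependent real and imaginary parts; propagating along the connected edge graph, every vertex would have this degeneracy, and then in the notation of Lemma~\ref{lem:two}, for an adjacent pair the quantities $u_1^\parallel,u_1^\perp,u_2^\parallel,u_2^\perp$ would all be real multiples of one common complex phase, which is consistent, so this needs a different contradiction. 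The right way is to use real-valuedness directly: if $\mathbf{u}_\mbn(t_0)=(a+bi)\mathbf{w}$ then $\overline{\mathbf{u}_\mbn}=\mathbf{u}_{-\mbn}$ is $(a-bi)\mathbf{w}$, and one checks that such a rank-one coefficient structure propagated around the polyhedron is incompatible with the orthogonality $\mbn\cdot\mathbf{u}_\mbn=0$ at vertices spanning all of $\R^3$ unless $\mathbf{u}_\mbn=\mbz$; concretely, $\Sc{R}_{\hat{\mbn}\mapsto\hat{\mbn}'}$ rotates the real line $\R\mathbf{w}$ to another real line, and forcing this to be consistent at a third vertex not coplanar with $\mbn,\mbn'$ pins $\mathbf{w}$ into the axis direction, whence $\mbn\cdot\mathbf{w}=0$ fails. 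I expect \textbf{step (iv)} — and the companion ``boundary points are vertices'' part of (ii) — to be the main obstacle: the planar/2D case had no coefficient-vector content at all (Step~0 there is purely about frequencies), whereas here one must track how the rotation operator $\Sc{R}$ moves the two-dimensional complex coefficient through the edge graph and extract a genuine contradiction from the interplay of real-valuedness, divergence-freeness, and the non-planarity of $S$. The cleanest route is probably to first establish (i)--(iii), then phrase (iv) as: the ``connected rotation transport'' along edges would make $\mathbf{u}$ a single global section, and a degenerate (rank-one) section cannot be simultaneously tangent to the sphere at three linearly independent points.
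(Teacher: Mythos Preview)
Your proposal has a genuine gap at the crucial step: ruling out case~(ii) of Proposition~\ref{prop:two} for adjacent vertices (equivalently, establishing~(iv)). Your ``perpendicular to two distinct planes'' argument only yields a contradiction if case~(ii) holds at \emph{every} edge meeting a given vertex~$\mbn$; nothing you have written prevents a mixture of cases~(ii) and~(iii) at different edges. Indeed, once case~(ii) holds at even a single edge incident to~$\mbn$, the vector $\mathbf{u}_\mbn(t_0)$ has proportional real and imaginary parts, and this rank-one property then propagates along the edge graph via \emph{both}~(ii) and~(iii) to every vertex --- but being rank-one at each vertex is not by itself contradictory (a real tangent line at finitely many points of the sphere is no obstruction). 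Your final sketch, ``forcing consistency at a third vertex pins $\mathbf{w}$ into the axis direction,'' does not work: consistency along an open path of edges imposes no constraint at all, since the rotations $\Sc{R}_{\hat\mbn\mapsto\hat\mbn'}$ simply transport one tangent line to another.

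The paper resolves this by a holonomy argument you are missing. One first shows only the weak dichotomy (your SIP analysis, which is fine) that adjacent vertices satisfy~(ii) \emph{or}~(iii), and that if an edge carries interior points of~$S$ then~(ii) holds along it (since~(iii) at consecutive pairs would force three collinear points equidistant from the origin). Then, assuming~(iv) fails, one tracks the real line $\ell(\mbn)\ni\Re\mathbf{u}_\mbn(t_0),\Im\mathbf{u}_\mbn(t_0)$ around a \emph{closed} cycle of edges bounding a face~$F$. A spherical-geometry computation --- the Gauss--Bonnet theorem applied to the spherical polygon $\hat F^*$ --- shows that the composite rotation $\Sc{R}_{\hat\mbn_p\mapsto\hat\mbn_1}\circ\cdots\circ\Sc{R}_{\hat\mbn_1\mapsto\hat\mbn_2}$ is rotation about~$\mbn_1$ by the angle $\pm A(\hat F^*)$. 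Since the normalized polytope has at least six faces, some face has $A(\hat F^*)\le 2\pi/3\notin\pi\mathbb{Z}$, so this rotation cannot fix any line in the tangent plane at~$\mbn_1$, contradicting $\Sc{R}_1\ell(\mbn_1)=\ell(\mbn_1)$. Only after~(iv) is established does the paper deduce~(iii), then~(ii) and~(i); your ordering (iii)$\to$(i),(ii)$\to$(iv) cannot be made to work without this ingredient, because excluding Proposition~\ref{prop:two}(ii) for adjacent vertices is precisely~(iv).
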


Before proving the above proposition, we prepare two lemmas.
\begin{lem}\label{lem:3-1}
Under the assumptions in Proposition~\ref{prop:S1}, let $E$ be an arbitrary edge of $S^{conv}$.
\begin{enumerate}
\item If $E\cap S$ consists of the endpoints of $E$, then the endpoints are SIP in $S$ and the associated coefficient vectors at $t=t_0$ satisfy either (ii) or (iii) of Proposition~\ref{prop:two}.
\item If $E\cap S$ has more than two points, then (ii) of Proposition~\ref{prop:two} occurs for any pair of points in $E\cap S$ and the associated coefficient vectors at $t=t_0$.
\end{enumerate}
In particular, any pair of adjacent vertices of $S^{conv}$ do not interact.
\end{lem}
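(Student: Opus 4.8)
The plan is to combine elementary convex geometry of the polytope $S^{conv}$ with the trichotomy of Proposition~\ref{prop:two}, in the spirit of Step~1 of the two-dimensional argument but now keeping track of the extra component of the coefficient vectors. First a preliminary: since $S=-S$, the origin is the convex combination $\mbz=\tfrac12\mbn+\tfrac12(-\mbn)$ of points of $S$, hence $\mbz\in S^{conv}$, and in fact $\mbz$ lies in the \emph{interior} of $S^{conv}$ --- otherwise a supporting hyperplane at $\mbz$ together with the central symmetry of $S^{conv}$ would force $S^{conv}$ into a plane, contradicting the hypothesis that $S$ is not planar. Consequently every proper face of $S^{conv}$ is exposed by a \emph{linear} functional; in particular there is a linear $f\colon\R^3\to\R$ with $f\equiv 1$ on the given edge $E$ and $f<1$ on $S^{conv}\setminus E$. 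The same symmetry argument shows that the line carrying $E$ misses the origin, so any two distinct points of $E$ are linearly independent; in particular alternative (i) of Proposition~\ref{prop:two} is excluded a priori for every pair of modes lying on $E$. Part~(i) of the lemma is then immediate: if $E\cap S=\{\mbn_1,\mbn_2\}$, then $f(\mbn_1+\mbn_2)=2$ gives $\mbn_1+\mbn_2\notin S$, and any relation $\mbn_3+\mbn_4=\mbn_1+\mbn_2$ with $\mbn_3,\mbn_4\in S$ forces $f(\mbn_3)=f(\mbn_4)=1$, hence $\{\mbn_3,\mbn_4\}\subset E\cap S=\{\mbn_1,\mbn_2\}$; so the pair is an SIP, Remark~\ref{rem:SIP}(i) gives \eqref{cond1} for the coefficient vectors at $t_0$, and Proposition~\ref{prop:two} (with its case~(i) ruled out) leaves only (ii) or (iii).

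For part~(ii), list $E\cap S=\{\mbn_1,\dots,\mbn_p\}$ ($p\ge 3$) in order along $E$ and write $\mbn_j=\mbn_1+t_j\mathbf{d}$ with $\mathbf{d}\neq\mbz$ and $0=t_1<\dots<t_p$. Exactly as above, for any $\mbn_a+\mbn_b=\mbn_i+\mbn_j$ with $\mbn_a,\mbn_b\in S$ one has $\mbn_a,\mbn_b\in E\cap S$ and $t_a+t_b=t_i+t_j$. Since $t_1=0$ is the smallest of the $t_k$, the values $t_2$ and $t_3$ each arise as a sum $t_a+t_b$ of two distinct $t_k$ in only one way, namely from $\{t_1,t_2\}$ and $\{t_1,t_3\}$ respectively (a second representation of $t_3$ would need two distinct $t_k$ strictly between $0$ and $t_3$, of which only $t_2$ is available, and similarly for $t_2$). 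Hence $(\mbn_1,\mbn_2)$ and $(\mbn_1,\mbn_3)$ are SIPs, so each satisfies (ii) or (iii) of Proposition~\ref{prop:two} at $t_0$. If $(\mbn_1,\mbn_2)$ were of type~(iii), then $|\mbn_1|=|\mbn_2|$ and, by Remark~\ref{rem:beltrami1}, $u_1^\parallel\neq 0$; the latter excludes type~(ii) for $(\mbn_1,\mbn_3)$, so that pair would also be of type~(iii), giving $|\mbn_1|=|\mbn_3|$ --- impossible, since $\mbn_1,\mbn_2,\mbn_3$ are collinear and a line meets a sphere in at most two points. Therefore $(\mbn_1,\mbn_2)$ is of type~(ii), i.e.\ $u_1^\parallel=u_2^\parallel=0$ in the notation of Lemma~\ref{lem:two}.

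Next I propagate this vanishing inward along $E$ by induction on $j$. Assuming $u_1^\parallel=\dots=u_{j-1}^\parallel=0$ for some $j\ge 3$, I evaluate \eqref{cond:Euler} at the frequency $\mbn_1+\mbn_j\in\ti S\setminus S$, whose coefficient vector vanishes identically. Every contributing pair $(\mbn_a,\mbn_b)$ other than $(\mbn_1,\mbn_j)$ has $t_a+t_b=t_j$ with $0<t_a<t_b<t_j$, hence $a,b\in\{2,\dots,j-1\}$, so $u_a^\parallel=u_b^\parallel=0$ and, by the formula in Lemma~\ref{lem:two}, that pair contributes nothing whatsoever to the nonlinearity. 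Thus \eqref{cond1} holds for $(\mbn_1,\mbn_j)$ at $t_0$, and since $u_1^\parallel=0$ rules out type~(iii), case~(ii) of Proposition~\ref{prop:two} applies and forces $u_j^\parallel=0$. It follows that $u_i^\parallel=0$ for every $i$, which by Remark~\ref{rem:beltrami1} is precisely condition~(ii) of Proposition~\ref{prop:two} for every pair of modes on $E$ --- the assertion of part~(ii). Finally, two adjacent vertices of $S^{conv}$ are the endpoints of some edge $E$; whether $E\cap S$ has exactly two points (part~(i)) or more (part~(ii)), we have just shown that their coefficient vectors at $t_0$ satisfy (ii) or (iii) of Proposition~\ref{prop:two}, so \eqref{cond1} holds and they do not interact.

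The bookkeeping with the functional $f$ and the exclusion of the irrelevant alternatives of Proposition~\ref{prop:two} are routine. The one genuinely delicate point is part~(ii) when $p\ge 4$: consecutive \emph{interior} points of $E$ need not form an SIP, so Proposition~\ref{prop:two} cannot be applied to such a pair directly. The way around this is the induction above, which transports the vanishing of the in-plane components from the endpoint $\mbn_1$ to all of $E$; it hinges on the structural fact, visible in Lemma~\ref{lem:two}, that a pair of modes whose in-plane components both vanish produces \emph{no} output at all --- not merely no output parallel to $\mbe^\parallel_{\mbn_a+\mbn_b}$.
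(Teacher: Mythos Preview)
Your proof is correct and follows essentially the same route as the paper's: a linear functional isolating the edge to establish the SIP property, the same three-collinear-points contradiction to rule out type~(iii) for the initial pair, and the same induction propagating $u_k^\parallel=0$ along $E$ via the equation at $\mbn_1+\mbn_j$. Your explicit preliminary showing $\mbz\in\mathrm{int}\,S^{conv}$ (hence the exposing functional may be taken linear and any two points on $E$ are linearly independent) is a nice clarification the paper leaves implicit, and your construction of $f$ differs only cosmetically from the paper's $f=\tfrac12(f_1+f_2)$ built from the two facets meeting at $E$.
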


\begin{proof}
Let $E\cap S$ consist of $p\,(\ge 2)$ points $\mbn_1,\mbn_2,\dots ,\mbn_p$ located in this order (thus the endpoints $\mbn_1,\mbn_p$ are adjacent vertices of $S^{conv}$).

(i) 
When $p=2$, it is easily shown that two endpoints $\mbn_1,\mbn_2$ are SIP.
To see this, let $F_1,F_2$ be the two faces of $S^{conv}$ sharing $E$ as a side, $f_1,f_2$ be the linear functionals on $\R^3$ which are identically equal to $1$ on $F_1$ and $F_2$, respectively, and define $f:=(f_1+f_2)/2$.
Then, we see that $S^{conv}\subset \{ \mbn\in \R ^3:f(\mbn)\le 1\}$, and that $S^{conv}\cap \{ f(\mbn)=1\}=E$.
First, $f(\mbn_1+\mbn_2)=f(\mbn_1)+f(\mbn_2)=2>1$ and thus $\mbn_1+\mbn_2\not\in S$.
Secondly, assume that $\mbn',\mbn''\in S$ satisfy $\mbn_1+\mbn_2=\mbn'+\mbn''$, then it holds that $f(\mbn')+f(\mbn'')=2$ and $f(\mbn'),f(\mbn'')\le 1$, which implies that $f(\mbn')=f(\mbn'')=1$ and thus $\mbn',\mbn''\in E$.
But $S\cap E=\{ \mbn_1,\mbn_2\}$, so we have $\{ \mbn_1,\mbn_2\}=\{ \mbn',\mbn''\}$.
This shows that $\mbn_1,\mbn_2$ are SIP in $S$.
By Remark~\ref{rem:SIP} (i), the coefficient vectors at $\mbn_1,\mbn_2$ satisfies \eqref{cond1}, and hence the claim follows from Proposition~\ref{prop:two} (since $\mbn_1,\mbn_2$ are linearly independent).

(ii)
We next assume $p\ge 3$.
It follows that $\mbn_1,\mbn_2$ are SIP:
In fact, $\mbn_1+\mbn_2=\mbn'+\mbn''$, $\mbn',\mbn''\in S$ imply that $\mbn',\mbn''\in E$ (by the same argument as the above case of $p=2$) and that the two segments $[\mbn_1,\mbn_2]$ and $[\mbn',\mbn'']$ have the common middle point.
By the definition of $\mbn_1,\mbn_2$, it must hold that $\{ \mbn_1,\mbn_2\}=\{ \mbn',\mbn''\}$.
A similar argument shows that $\mbn_1$, $\mbn_3$ are also SIP (because it is only $\mbn_2$ that is in $S\cap E$ and between $\mbn_1,\mbn_3$).
Hence, \eqref{cond1} holds for the pair $\mbn_1,\mbn_2$ and the associated coefficient vectors, so that either (ii) or (iii) of Proposition~\ref{prop:two} occurs, and the same is true for the pair $\mbn_1,\mbn_3$. 
Now, suppose $\mbn_1,\mbn_2$ are as in (iii) of Proposition~\ref{prop:two}, then the coefficient vector at $\mbn_1$ is not perpendicular to the plane containing $E$ and the origin, which shows that $\mbn_1,\mbn_3$ are also as in (iii).
This implies $|\mbn_1|=|\mbn_2|=|\mbn_3|$, which is, however, impossible because these three points are collinear.
Therefore, the coefficient vectors at $\mbn_1,\mbn_2,\mbn_3$ are all perpendicular to that plane and any two of them are as in (ii) of Proposition~\ref{prop:two}, verifying the claim for $p=3$.

In the case $p\ge 4$, we can show by induction that all of $\mbn_1,\dots ,\mbn_p$ are actually in the same relation (and hence the claim follows): 
Suppose any two of $\mbn_1,\dots ,\mbn_{q-1}$ ($4\le q\le p$) are as in (ii), and consider the nonlinear interaction contributing to $\mbn_1+\mbn_q$.
The above argument with the linear functional $f$ shows that $\mbn_1+\mbn_q\not\in S$, and that $\mbn_1+\mbn_q=\mbn'+\mbn''$, $\mbn',\mbn''\in S$, $\{ \mbn_1,\mbn_q\}\neq \{ \mbn',\mbn''\}$ imply $\mbn',\mbn''\in \{ \mbn_2,\cdots ,\mbn_{q-1}\}$.
By the induction assumption and Proposition~\ref{prop:two}, such a pair $(\mbn',\mbn'')$ is not interacting (i.e., \eqref{cond1} holds).
Then, the equation \eqref{cond:Euler} shows that \eqref{cond1} also holds for the pair $(\mbn_1,\mbn_q)$.
From Proposition~\ref{prop:two} again, this pair is also as in (ii).
In particular, the coefficient vectors at $\mbn_1,\dots ,\mbn_q$ are all perpendicular to the plane containing $E$ and the origin, and any two of them are as in (ii).
\end{proof}

\begin{lem}\label{lem:3-2}
Let $p\ge 3$ and $\mbom _1,\mbom _2,\dots ,\mbom _p$ be $p$ points located on a circle $\Sc{C}\subset \Stwo$ in this order, and assume that $\Sc{C}$ is not a great circle of $\Stwo$.
Let $A(F^*)\in (0,2\pi )$ be the area of the spherical $p$-polygon $F^*$ corresponding to the $p$-polygon $\mbom _1\mbom _2\dots \mbom _p$ (i.e., $F^*$ is the subset of $\Stwo$ enclosed by the geodesics connecting $\mbom _j$ and $\mbom _{j+1}$, $j=1,2,\dots ,p$, with the convention $\mbom _{p+1}=\mbom _1$).

Then, the operator $\Sc{R}_1:=\Sc{R}_{\mbom_p\mapsto \mbom_1}\circ \Sc{R}_{\mbom_{p-1}\mapsto \mbom_p}\circ \cdots \circ \Sc{R}_{\mbom_1\mapsto \mbom_2}$ is the rotation around the axis $\mbom_1$ by the angle either $A(F^*)$ or $-A(F^*)$.
\end{lem}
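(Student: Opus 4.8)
The plan is to identify the composition $\Sc{R}_1$ with the parallel-transport holonomy of the round metric around the geodesic polygon $F^*$, and then to evaluate that holonomy by the Gauss--Bonnet theorem.

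First I would record two preliminary reductions. Since the $\mbom_j$ lie on $\Sc{C}$ they are pairwise distinct, and since $\Sc{C}$ is not a great circle it contains no pair of antipodal points, so $\mbom_{j+1}\neq\pm\mbom_j$ and every factor $\Sc{R}_{\mbom_j\mapsto\mbom_{j+1}}$ is well defined. A direct chase shows that $\Sc{R}_1$ fixes $\mbom_1$: applying the composition from right to left, $\mbom_1\mapsto\mbom_2\mapsto\cdots\mapsto\mbom_p\mapsto\mbom_1$. Being a product of elements of $SO(3)$, $\Sc{R}_1$ is itself a rotation; if it is nontrivial, its fixed axis is necessarily $\R\mbom_1$, and its rotation angle can then be read off from the induced linear isometry of the plane $\mbom_1^{\perp}=T_{\mbom_1}\Stwo$. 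Thus everything reduces to computing this planar rotation.

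The geometric heart of the matter is that, on tangent spaces, $\Sc{R}_{\mbom_j\mapsto\mbom_{j+1}}$ coincides with the Levi--Civita parallel transport of $\Stwo$ along the minimizing geodesic arc from $\mbom_j$ to $\mbom_{j+1}$. Indeed, this rotation has axis $\mbom_j\gaiseki\mbom_{j+1}$, so it fixes the unit normal to that geodesic; moreover the great circle through $\mbom_j$ and $\mbom_{j+1}$ is invariant under it, so it carries the geodesic's velocity at $\mbom_j$ to its velocity at $\mbom_{j+1}$. Since parallel transport along the geodesic does the same to those two (orthonormal) tangent vectors, the two isometries of $T_{\mbom_j}\Stwo$ agree. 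Composing over $j=1,\dots,p$ (with $\mbom_{p+1}=\mbom_1$), I conclude that $\Sc{R}_1|_{T_{\mbom_1}\Stwo}$ is exactly the holonomy of parallel transport around the closed piecewise-geodesic loop bounding the disk $F^*$.

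Finally I would invoke the Gauss--Bonnet theorem in its holonomy form: parallel transport once around a loop bounding a topological disk $F^*$ returns the tangent plane to itself rotated by $\int_{F^*}K\,dA$, which on $\Stwo$ (where $K\equiv 1$) equals $A(F^*)$; the sense in which the loop is traversed merely flips the sign, so $\Sc{R}_1|_{T_{\mbom_1}\Stwo}$ is rotation by $A(F^*)$ or by $-A(F^*)$. Since $A(F^*)\in(0,2\pi)$ this is nontrivial, so $\Sc{R}_1\neq\mathrm{Id}$ and the reduction of the second paragraph applies, proving the claim. The points requiring the most care are the identification of $\Sc{R}_{\mbom_j\mapsto\mbom_{j+1}}$ with geodesic parallel transport and the orientation bookkeeping in Gauss--Bonnet (which the $\pm$ in the statement absorbs). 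Alternatively one can avoid the holonomy form of Gauss--Bonnet by triangulating $F^*$ from $\mbom_1$ into $p-2$ geodesic triangles, handling a single spherical triangle via the classical angular-excess formula, and using additivity of both areas and holonomies.
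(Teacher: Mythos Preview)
Your argument is correct and complete. The identification of $\Sc{R}_{\mbom_j\mapsto\mbom_{j+1}}$ with Levi--Civita parallel transport along the minimizing geodesic is the right observation, and the holonomy form of Gauss--Bonnet then gives the rotation angle immediately.

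Your route is genuinely different from the paper's. The paper normalizes so that $\Sc{C}$ lies in the open northern hemisphere parallel to $\{x^3=0\}$, introduces at each $\mbom_j$ the concrete ``eastward/northward'' orthonormal frame of $T_{\mbom_j}\Stwo$, and computes by elementary geometry that $\Sc{R}_{\mbom_j\mapsto\mbom_{j+1}}$ sends this frame to the corresponding frame at $\mbom_{j+1}$ rotated by an explicit angle $2\theta_j$; summing, $\Sc{R}_1$ rotates $T_{\mbom_1}\Stwo$ by $2\sum_j\theta_j$, and then the classical (angular-excess) form of Gauss--Bonnet, $A(F^*)=\Theta(F^*)-(p-2)\pi$ with $\Theta(F^*)=p\pi-2\sum_j\theta_j$, yields $2\sum_j\theta_j=2\pi-A(F^*)$. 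Your approach bypasses the explicit frame computation entirely by recognizing the composition as parallel-transport holonomy and invoking Gauss--Bonnet in its holonomy form. This is cleaner and coordinate-free, at the cost of presupposing the holonomy interpretation of curvature; the paper's computation is more hands-on and self-contained, exploiting the specific circular arrangement of the $\mbom_j$ to read off the angles $\theta_j$ directly.
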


\begin{proof}
We may assume without loss of generality that the $p$-polygon $\mbom _1\mbom _2\dots \mbom _p$ is in northern hemisphere $\{ x^3>0\}$ and parallel to the $(x^1,x^2)$ plane, and that the ordering $\mbom _1,\mbom _2,\dots ,\mbom _p$ is ``westward''.
Define the orthonormal frame $\{ \mbe^1(\mbom _j),\mbe^2(\mbom _j)\}$ of the tangent plane $T_{\mbom _j}\Stwo$ by the ``eastward'' and ``northward'' unit vectors; namely,
\eqq{\mbe^1(\mbom _j):=\frac{\mbe^3\gaiseki \mbom _j}{|\mbe^3\gaiseki \mbom _j|},\qquad \mbe^2(\mbom _j):=\mbom _j\gaiseki \frac{\mbe^3\gaiseki \mbom _j}{|\mbe^3\gaiseki \mbom _j|}\qquad (j=1,2,\dots ,p),}
where $\mbe^3=(0,0,1)$ denotes the north pole.

By a simple geometric observation (see Figure~\ref{fig:rotation}), it turns out that the operator $\Sc{R}_{\mbom _j\mapsto \mbom _{j+1}}$ maps $T_{\mbom_j}\Stwo$ onto $T_{\mbom_{j+1}}\Stwo$ and acts as the rotation by the angle $2\th _j$:
\eqq{\Sc{R}_{\mbom _j\mapsto \mbom _{j+1}}\mbe^1(\mbom _j)&=(\cos 2\th _j)\mbe^1(\mbom _{j+1})+(\sin 2\th _j)\mbe^2(\mbom _{j+1}),\\
\Sc{R}_{\mbom _j\mapsto \mbom _{j+1}}\mbe^2(\mbom _j)&=-(\sin 2\th _j)\mbe^1(\mbom _{j+1})+(\cos 2\th _j)\mbe^2(\mbom _{j+1})\qquad (j=1,2,\dots ,p),}
where $\th _j$ denotes the angle between the latitude circle and the geodesic curve passing through $\mbom _j,\mbom _{j+1}$ such that $\th _j\in (0,\pi /2)$ [resp. $\in (\pi /2,\pi)$] when $\angle \mbom _j\mbom _{j-1}\mbom _{j+1}<\pi /2$ [resp. $>\pi /2$], with the convention $\mbom _{p+1}=\mbom _1$, $\mbom _0=\mbom _p$.
Therefore, the composition operator $\Sc{R}_1$ restricted to $T_{\mbom_1}\Stwo$ is the rotation on $T_{\mbom _1}\Stwo$ by $2\sum _{j=1}^p\th _j$.
It is clear that $\mbom_1$ is invariant under $\mathcal{R}_1$.

\begin{figure}\centering\includegraphics{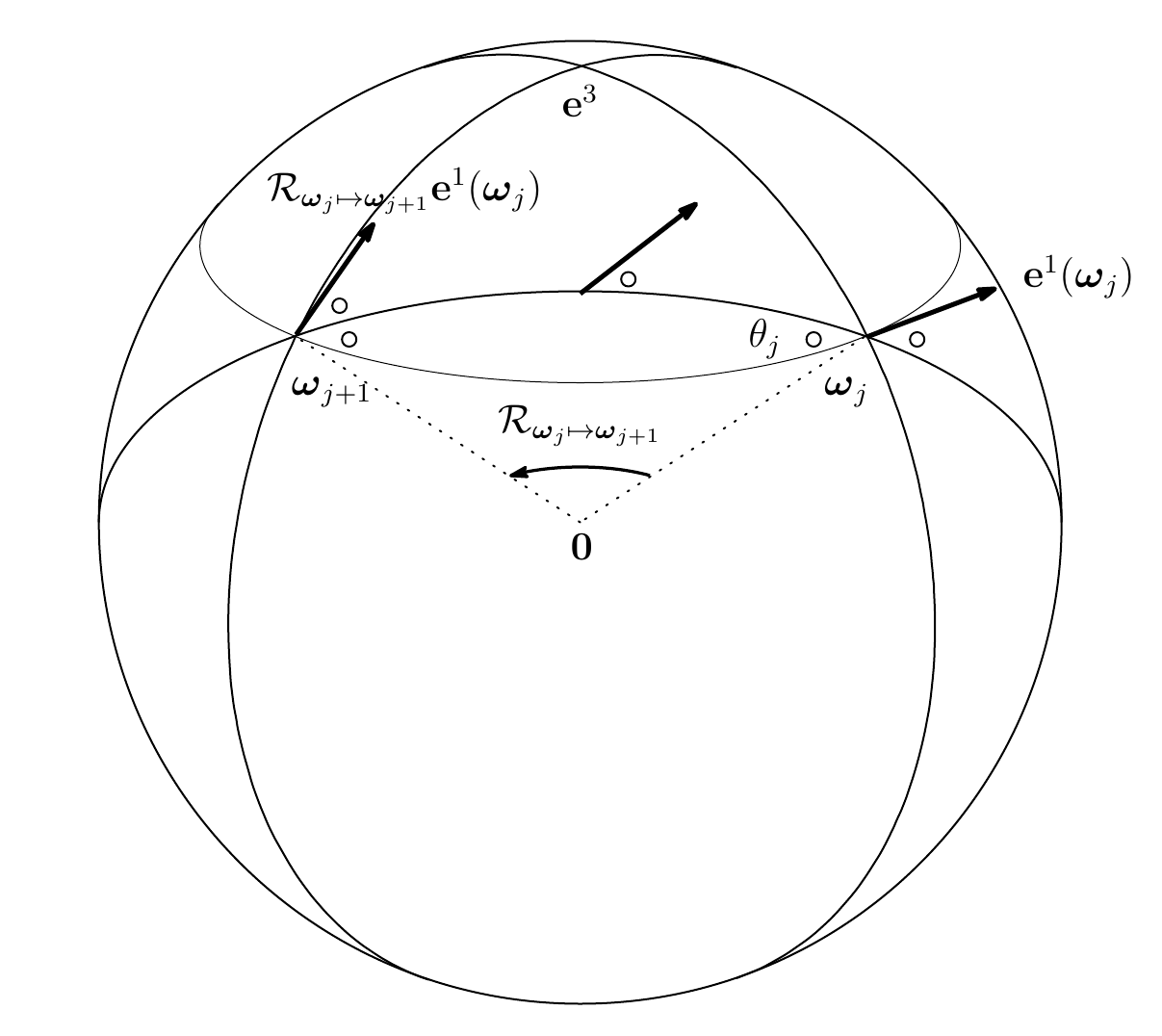}\caption{$\Sc{R}_{\mbom _j\mapsto \mbom _{j+1}}:T_{\mbom_j}\Stwo \to T_{\mbom_{j+1}}\Stwo$ is the rotation by the angle $2\th _j$.}\label{fig:rotation}\end{figure}

Now, let $\Theta (F^*)$ be the sum of the interior angles of the spherical polygon $F^*$, namely $\Theta (F^*)=p\pi -2\sum _{j=1}^p\th _j$.
The celebrated Gauss-Bonnet theorem tells us that $A(F^*)=\Theta (F^*)-(p-2)\pi$, and hence $2\sum _{j=1}^p\th _j=2\pi -A(F^*)$.
Therefore, in this case the rotation angle of $\Sc{R}_1$ is equal to $-A(F^*)$.
When the ordering $\mbom _1,\mbom _2,\dots ,\mbom _p$ is ``eastward'', the same argument concludes that the rotation angle is $A(F^*)$.
\end{proof}

\begin{proof}[Proof of Proposition~\ref{prop:S1}]
The main part of the proof is to establish (iv).
Suppose, for the sake of contradiction, that $\Re \mathbf{u}_\mbn(t_0)$ and $\Im \mathbf{u}_\mbn(t_0)$ are linearly dependent (including the case where one of them is zero; since $t_0\not\in I_0$, at least one of them is nonzero) for some vertex $\mbn$ of $S^{conv}$.
Observe that this property ``propagates'' to adjacent vertices of $S^{conv}$, because by Lemma~\ref{lem:3-1} any pair of adjacent vertices satisfies either (ii) or (iii) of Proposition~\ref{prop:two}.
Hence, for every vertex $\mbn$ of $S^{conv}$ a line $\ell (\mbn)\subset \R ^3$ passing through the origin is uniquely determined so that both $\Re \mathbf{u}_\mbn(t_0)$ and $\Im \mathbf{u}_\mbn(t_0)$ are parallel to $\ell (\mbn)$ (or one of them is zero and the other is parallel to $\ell (\mbn)$).

We claim that $\ell (\mbn_2)=\Sc{R}_{\hat{\mbn}_1\mapsto \hat{\mbn}_2}\ell (\mbn_1)$ for each pair $(\mbn_1,\mbn_2)$ of adjacent vertices of $S^{conv}$, where $\hat{\mbn}_j=\mbn_j/|\mbn_j|$.
In fact, if $\mbn_1,\mbn_2$ are as in (ii) of Proposition~\ref{prop:two}, $\ell (\mbn_1)=\ell (\mbn_2)$ is the line perpendicular to the plane including $\mbn_1$, $\mbn_2$ and the origin, and also it is the rotation axis of $\Sc{R}_{\hat{\mbn}_1\mapsto \hat{\mbn}_2}$ and thus invariant under that transformation.
For the situation (iii) of Proposition~\ref{prop:two}, this follows immediately from Proposition~\ref{prop:two}.
As a consequence, for any closed chain of edges of $S^{conv}$, say $\mbn_1\mbn_2\dots \mbn_p\mbn_1$ (where $\mbn_j$ is a vertex of $S^{conv}$ and adjacent to $\mbn_{j-1}$ for each $j$, with $\mbn_1$ adjacent to $\mbn_p$), the composition operator $\Sc{R}_1=\Sc{R}_{\hat{\mbn}_p\mapsto \hat{\mbn}_1}\circ \Sc{R}_{\hat{\mbn}_{p-1}\mapsto \hat{\mbn}_p}\circ \cdots \circ \Sc{R}_{\hat{\mbn}_1\mapsto \hat{\mbn}_2}$ does not change the initial line $\ell (\mbn_1)$.
Therefore, to derive a contradiction, it suffices to find a closed chain of edges of $S^{conv}$ such that $\mathcal{R}_1\ell (\mbn_1)\neq \ell (\mbn_1)$.

Let $\hat{S}^{conv}$ be the convex hull of $\{ \hat{\mbn}:=\mbn/|\mbn|:\text{$\mbn$ is a vertex of $S^{conv}$}\}$, the set of normalized vertices of $S^{conv}$.
Note that $\hat{S}^{conv}$ is a symmetric polyhedron inscribed in $\Stwo$.
In particular, it has at least six faces, and hence there exists a face $\hat{F}$ of $\hat{S}^{conv}$ such that the area $A(\hat{F}^*)$ of the corresponding spherical polygon $\hat{F}^*\subset \Stwo$ is not greater than $2\pi /3$.
%
%
Now, let $\hat{\mbn}_1,\hat{\mbn}_2,\dots ,\hat{\mbn}_p$ be the list of all vertices of $\hat{F}$ located in this order, and let $\mbn_1,\mbn_2,\dots ,\mbn_p$ be the corresponding vertices of $S^{conv}$.
From Lemma~\ref{lem:3-2}, we see that the resulting operator $\mathcal{R}_1$ is the rotation by the angle $\pm A(\hat{F}^*)$ around the axis $\mbn_1$.
Since $\ell (\mbn_1)$ is perpendicular to $\mbn_1$ and $\pm A(\hat{F}^*)\not\in \pi \mathbb{Z}$, we have $\mathcal{R}_1\ell (\mbn_1)\neq \ell (\mbn_1)$ and reach a contradiction, as desired.

We have thus proved (iv).
Note that the property (iv) prohibits the situation (ii) of Proposition~\ref{prop:two} for any pair of adjacent vertices of $S^{conv}$.
Then, Lemma~\ref{lem:3-1} implies (ii) and (iii).
Finally, the property (i) follows from (iii). 
\end{proof}

We are now in a position to show that:
\begin{prop}\label{prop:BV}
Under the same assumptions as in Proposition~\ref{prop:S1}, 
it holds that the coefficient vectors $\mathbf{u}_{\mbn}(t_0)$ at vertices of $S^{conv}$ are all $BV^+$ or all $BV^-$.
\end{prop}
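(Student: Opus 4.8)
The plan is to run a monodromy (holonomy) argument around a suitable face of $S^{conv}$, combining the transport relations of Proposition~\ref{prop:S1}(iii) with the rotation identity of Lemma~\ref{lem:3-2}. First I would fix a face on which the holonomy is a genuine (non-degenerate) rotation: since all vertices of $S^{conv}$ lie on one sphere centered at the origin (Proposition~\ref{prop:S1}(i)), $\hat{S}^{conv}$ is a rescaling of $S^{conv}$, and as noted in the proof of Proposition~\ref{prop:S1} it is a symmetric polyhedron inscribed in $\Stwo$ with at least six faces and $\sum _{\hat{F}}A(\hat{F}^*)=4\pi$; hence there is a face $\hat{F}$ with $A(\hat{F}^*)\in(0,2\pi /3]$, so the angle $\pm A(\hat{F}^*)$ lies in $(0,2\pi )\setminus \pi \mathbb{Z}$. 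Let $\mbn_1,\dots ,\mbn_p$ be the vertices of the corresponding face of $S^{conv}$ listed in cyclic order.

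Next I would transport the coefficient vector $\mathbf{u}_{\mbn_1}(t_0)$ once around this face. By Proposition~\ref{prop:S1}(iii), each edge $\{ \mbn_j,\mbn_{j+1}\}$ is in case (iii) of Proposition~\ref{prop:two} at $t=t_0$, so $\mathbf{u}_{\mbn_{j+1}}(t_0)=\gamma _j\,\Sc{R}_{\hat{\mbn}_j\mapsto \hat{\mbn}_{j+1}}\mathbf{u}_{\mbn_j}(t_0)$ for some $\gamma _j\in \mathbb{C}\setminus \{ 0\}$ (indices mod $p$). Composing these relations around the loop yields
\[ \mathbf{u}_{\mbn_1}(t_0)=\Big( \prod _{j=1}^p\gamma _j\Big) \Sc{R}_1\,\mathbf{u}_{\mbn_1}(t_0),\qquad \Sc{R}_1:=\Sc{R}_{\hat{\mbn}_p\mapsto \hat{\mbn}_1}\circ \cdots \circ \Sc{R}_{\hat{\mbn}_1\mapsto \hat{\mbn}_2}, \]
so $\mathbf{u}_{\mbn_1}(t_0)$ is an eigenvector of $\Sc{R}_1$ with nonzero eigenvalue $\big( \prod _j\gamma _j\big) ^{-1}$. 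By Lemma~\ref{lem:3-2}, $\Sc{R}_1$ is the rotation of $\R^3$ about the axis $\mbn_1$ by the angle $\pm A(\hat{F}^*)$.

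Then I would read off the structure of $\mathbf{u}_{\mbn_1}(t_0)$. Since $\mbn_1\cdot \mathbf{u}_{\mbn_1}(t_0)=0$ and $\mbn_1\cdot \mbn_1\neq 0$, the nonzero vector $\mathbf{u}_{\mbn_1}(t_0)$ is not a multiple of the rotation axis, hence it lies in the complexified orthogonal plane $\{ \mathbf{v}\in \mathbb{C}^3:\mbn_1\cdot \mathbf{v}=0\}$, on which $\Sc{R}_1$ acts as a rotation by $\pm A(\hat{F}^*)\notin \pi \mathbb{Z}$, whose two eigenvalues $e^{\pm iA(\hat{F}^*)}$ are distinct. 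The corresponding eigenlines are spanned by $\mathbf{w}^\pm :=\mathbf{a}\pm i(\hat{\mbn}_1\gaiseki \mathbf{a})$ for any nonzero real $\mathbf{a}\perp \mbn_1$; a direct computation ($\hat{\mbn}_1\gaiseki (\hat{\mbn}_1\gaiseki \mathbf{a})=-\mathbf{a}$) shows each of $\mathbf{w}^+,\mathbf{w}^-$ is an eigenvector of $i\mbn_1\gaiseki$, i.e.\ a Beltrami vector ($BV^+$ or $BV^-$) at $\mbn_1$. As $BV^\pm$ is invariant under nonzero complex scaling (Remark~\ref{rem:SIP}(iv)), $\mathbf{u}_{\mbn_1}(t_0)$ is therefore a $BV^+$ or a $BV^-$. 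Finally, by Remark~\ref{rem:SIP}(iv) together with Proposition~\ref{prop:S1}(iii), the property of being $BV^+$ (resp.\ $BV^-$) at a vertex is carried to every adjacent vertex with the same sign; since the $1$-skeleton of the polyhedron $S^{conv}$ is connected, starting from $\mbn_1$ we conclude that the coefficient vectors $\mathbf{u}_\mbn(t_0)$ at all vertices of $S^{conv}$ are $BV^+$, or all are $BV^-$.

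Essentially all the substantive work is already packaged in Lemma~\ref{lem:3-2} (the Gauss--Bonnet holonomy) and in Proposition~\ref{prop:S1}, so the remaining obstacle is modest: it is the spectral analysis of $\Sc{R}_1$ on $\mathbb{C}^3$, namely verifying that a nonzero vector $\mathbb{C}$-orthogonal to the axis and fixed up to a scalar by a rotation of angle $\notin \pi \mathbb{Z}$ must be a complex multiple of a genuine Beltrami vector, and that this property propagates consistently along the edge graph via Remark~\ref{rem:SIP}(iv). Care is needed only to ensure the monodromy is a non-degenerate rotation, which is why we select the face with small area.
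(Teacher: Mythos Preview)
Your proof is correct and follows essentially the same route as the paper: select a face with spherical area at most $2\pi/3$, transport $\mathbf{u}_{\mbn_1}(t_0)$ around its boundary using Proposition~\ref{prop:S1}(iii) to exhibit it as an eigenvector of the holonomy rotation $\Sc{R}_1$ (Lemma~\ref{lem:3-2}), identify the two non-axial eigenlines of $\Sc{R}_1$ with $BV^\pm$ at $\mbn_1$, and then propagate the sign along the edge graph via Remark~\ref{rem:SIP}(iv). Your explicit description of the eigenvectors as $\mathbf{w}^\pm=\mathbf{a}\pm i(\hat{\mbn}_1\gaiseki\mathbf{a})$ and the appeal to connectedness of the $1$-skeleton are minor elaborations, but the argument is identical to the paper's.
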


\begin{proof}
As observed in the proof of Proposition~\ref{prop:S1}, there is a face $F$ of $S^{conv}$ such that the spherical polygon $\hat{F}^*\subset \Stwo$ corresponding to the normalized face $\hat{F}$ of $\hat{S}^{conv}$ has the area $A(\hat{F}^*)\in (0,2\pi /3]$.
Let $\mbn_1,\mbn_2,\dots ,\mbn_p\in S$ be the list of all vertices of $F$ located in this order. 

By Proposition~\ref{prop:S1}~(iii) and Proposition~\ref{prop:two}, there exist $\ga _1,\cdots ,\ga _p\in \Bo{C}\setminus \{ 0\}$ such that
\eqq{\mathbf{u}_{\mbn_1}(t_0)&=\big[ (\ga _p\Sc{R}_{\hat{\mbn}_p\mapsto \hat{\mbn}_1})\circ (\ga _{p-1}\Sc{R}_{\hat{\mbn}_{p-1}\mapsto \hat{\mbn}_p})\circ \cdots \circ (\ga _1\Sc{R}_{\hat{\mbn}_1\mapsto \hat{\mbn}_2})\big] \mathbf{u}_{\mbn_1}(t_0)\\
&=(\ga _1\ga _2\cdots \ga _p)\Sc{R}_1\mathbf{u}_{n_1}(t_0)}
with $\Sc{R}_1=\Sc{R}_{\hat{\mbn}_p\mapsto \hat{\mbn}_1}\circ \Sc{R}_{\hat{\mbn}_{p-1}\mapsto \hat{\mbn}_p}\circ \cdots \circ \Sc{R}_{\hat{\mbn}_1\mapsto \hat{\mbn}_2}$; namely, $\mathbf{u}_{\mbn_1}(t_0)\in \Bo{C}^3\setminus \{ \mbz \}$ is an eigenvector of the rotation operator $\Sc{R}_1$.
By Lemma~\ref{lem:3-2}, the rotation angle is $\pm A(\hat{F}^*)$, which is not an integral multiple of $\pi$.
We may focus on the case of $+A(\hat{F}^*)$, then it is easily verified that $\Sc{R}_1$ has three distinct eigenvalues $1,e^{iA(\hat{F}^*)},e^{-iA(\hat{F}^*)}$, with the corresponding eigenspaces $\{ \al \hat{\mbn}_1:\al \in \Bo{C}\}$, $\{ \text{$BV^+$ at $\mbn_1$}\}\cup \{ \mbz\}$ and $\{ \text{$BV^-$ at $\mbn_1$}\}\cup \{ \mbz \}$, respectively.
Since $\mathbf{u}_{\mbn_1}(t_0)\cdot \mbn_1=0$, we see that $\mathbf{u}_{\mbn_1}(t_0)$ is either $BV^+$ or $BV^-$ at $\mbn_1$.
Now, Remark~\ref{rem:SIP} (iv) together with Proposition~\ref{prop:S1} (iii) leads to the conclusion.
\end{proof}

If $\mbn_1,\mbn_2\in \R^3$ are two distinct frequencies satisfying $|\mbn_1|=|\mbn_2|$, and if $\mathbf{u}_{\mbn_1},\mathbf{u}_{\mbn_2}\in \Bo{C}^3$ are both $BV^+$ or both $BV^-$ at these frequencies, then these two modes satisfy the condition (i) or (iii) of Proposition~\ref{prop:two}, and hence do not interact.
In particular, we see from Proposition~\ref{prop:BV} that there is no contribution from the nonlinear interaction between any pair of vertices of $S^{conv}$ (not necessarily adjacent).
This property (corresponding to Step~2 in 2D) will play a key role in verifying the following proposition (corresponding to Step~3 in 2D):

\begin{prop}\label{prop:interior}
Under the same assumptions as in Proposition~\ref{prop:S1}, 
there is no point of $S$ other than the vertices of $S^{conv}$.
\end{prop}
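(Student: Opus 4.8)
The plan is to run the contradiction argument that is the $3$D counterpart of ``Step~3'' of the $2$D proof, using as the analogue of ``Step~2'' the fact (recorded just before the statement) that no two vertices of $S^{conv}$ interact. So, suppose $S$ contains a point that is not a vertex of $S^{conv}$. By Proposition~\ref{prop:S1}~(i) the polyhedron $S^{conv}$ is inscribed in the sphere of radius $\lambda$, and by Proposition~\ref{prop:S1}~(ii) there is no point of $S$ in the relative interior of an edge; hence every non-vertex point of $S$ is a convex combination of two or more distinct vertices lying on that sphere, and so, by strict convexity of the Euclidean ball, lies \emph{strictly} inside the ball of radius $\lambda$. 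Thus I can choose $\mbm\in S$, not a vertex, with $r:=|\mbm|<\lambda$ maximal among non-vertex points; then every non-vertex point of $S$ lies in $\overline{B(\mbz,r)}$, and in fact $\mbn'\cdot\mbm<r^2$ for every non-vertex $\mbn'\neq\mbm$.

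Next I would take $\mbn$ to be a vertex of $S^{conv}$ \emph{nearest} to $\mbm$, i.e.\ maximizing $\mbn'\cdot\mbm$ over the vertices; writing $\mbm$ as a convex combination of vertices gives $\mbn\cdot\mbm\ge|\mbm|^2=r^2$. Two elementary checks then follow. First, $\mbm+\mbn\notin S$: were it a vertex, $(\mbm+\mbn)\cdot\mbm\le\mbn\cdot\mbm$ would force $r^2\le0$; were it a non-vertex, $|\mbm+\mbn|\le r$ would contradict $(\mbm+\mbn)\cdot\mbm=r^2+\mbn\cdot\mbm\ge2r^2>0$. Second, any pair $\{\mbn_3,\mbn_4\}\subset S$ with $\mbn_3+\mbn_4=\mbm+\mbn$ and $\{\mbn_3,\mbn_4\}\neq\{\mbm,\mbn\}$ must consist of two vertices: dotting the relation with $\mbm$ yields $\mbn_3\cdot\mbm+\mbn_4\cdot\mbm=r^2+\mbn\cdot\mbm$, and (since neither of $\mbn_3,\mbn_4$ can equal $\mbm$ or $\mbn$ in such a pair) a non-vertex entry contributes $<r^2$, forcing the other entry to exceed $\mbn\cdot\mbm$ — impossible for a vertex by maximality of $\mbn$, and impossible for a second such non-vertex because then the left side is $<2r^2\le r^2+\mbn\cdot\mbm$. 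By Proposition~\ref{prop:BV} the coefficient vectors at the two vertices of any such remaining pair are both $BV^+$ or both $BV^-$, hence by Proposition~\ref{prop:two}~(i)/(iii) together with Remark~\ref{rem:SIP}~(iv) they do not interact. Evaluating \eqref{cond:Euler} at the mode $\mbm+\mbn\notin S$ at time $t_0$ (so $\mathbf{u}_{\mbm+\mbn}\equiv\mbz$) therefore leaves only the pair $\{\mbm,\mbn\}$, giving $\hat{\HP}_{\mbm+\mbn}\big[(\mathbf{u}_\mbm(t_0)\cdot\mbn)\mathbf{u}_\mbn(t_0)+(\mathbf{u}_\mbn(t_0)\cdot\mbm)\mathbf{u}_\mbm(t_0)\big]=\mbz$. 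But $|\mbm|=r\neq\lambda=|\mbn|$, and by Proposition~\ref{prop:S1}~(iv) (indeed Proposition~\ref{prop:BV}) the vector $\mathbf{u}_\mbn(t_0)$ is a Beltrami vector, whose real and imaginary parts are linearly independent, so it is perpendicular to no plane; hence, as long as $\mbm$ and $\mbn$ are linearly independent, none of (i)--(iii) of Proposition~\ref{prop:two} can hold and the two modes \emph{do} interact — a contradiction.

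The hard part will be the single degenerate configuration in which the nearest vertex $\mbn$ is positively parallel to $\mbm$, i.e.\ $\mbm$ lies on the radius of the sphere through a vertex $\mbn_0$; then $\mbm$ and $\mbn_0$ are linearly dependent and trivially fail to interact, so the argument above yields no contradiction. To deal with this I would note that in this case $\mbn_0$ is the \emph{unique} nearest vertex, and replace $\mbn$ by the ``second nearest'' vertex $\mbn_1$ (the maximizer of $\mbn'\cdot\mbm$ over the vertices other than $\mbn_0$), which is linearly independent of $\mbm$ because $S^{conv}$ is not contained in a line. The ``vertex plus non-vertex'' alternative of the second check is again excluded for this choice — this time using that $\mbn_1\cdot\mbn_0<\lambda^2$ strictly — while ruling out the ``two non-vertices'' alternative needs a size bound of the form $|\mbm+\mbn_1|>2r$; this holds when $\mbm$ can be written as a convex combination of vertices other than $\mbn_0$, and the residual sub-case, where $\mbm$ sits in the ``cap'' of $S^{conv}$ cut off by deleting $\mbn_0$, requires a further refinement exploiting the maximality of $r$ together with the position of $\mbm$ in the relative interior of the slice of $S^{conv}$ by the plane through $\mbm$ normal to $\mbn_0$. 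I expect this last degenerate case to carry the bulk of the technical work; once it is handled, the contradiction shows that $S$ has no non-vertex point, which is Proposition~\ref{prop:interior}.
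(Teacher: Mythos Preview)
Your Euclidean-norm approach is genuinely different from the paper's, and the non-degenerate case (where the nearest vertex $\mbn$ is not parallel to $\mbm$) is clean and correct. The paper instead maximizes the Minkowski functional $N$ of $S^{conv}$ over non-vertex points, then refines within a face using a second Minkowski-type functional, and finally takes $\mbn_1$ to be an endpoint of a suitable edge of $S^{conv}$ --- a choice that is automatically linearly independent of $\mbn_0$, so no degenerate case ever arises.

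The gap is precisely the ``cap'' sub-case, and the sketch you give does not close it. Concretely, let $S^{conv}$ be the regular octahedron with vertices $(\pm 1,0,0),(0,\pm 1,0),(0,0,\pm 1)$, set $\mbn_0=(1,0,0)$, and suppose $\mbm=(t,0,0)$ with $t$ close to $1$ is a maximal-norm non-vertex. Every vertex other than $\pm\mbn_0$ is orthogonal to $\mbm$, so the second-nearest vertex $\mbn_1$ satisfies $\mbn_1\cdot\mbm=0$. Then $|\mbm+\mbn_1|^2=t^2+1<4t^2$ once $t>1/\sqrt{3}$, and there are pairs of points inside the octahedron, both of norm at most $t$, summing to $\mbm+\mbn_1$ (for instance $(t/2,\,1/2,\,\pm\varepsilon)$ for small $\varepsilon$). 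Nothing in your size or dot-product constraints rules out such a pair belonging to $S$, so the ``two non-vertices'' alternative survives. Your proposed refinement via the slice of $S^{conv}$ through $\mbm$ normal to $\mbn_0$ does not obviously help: in this symmetric example $\mbm$ sits at the centre of that slice and carries no extremal information. What is really needed is the polytope's face/edge structure, which is exactly what the paper's two-tier Minkowski-functional construction supplies: by landing $\mbn_0$ on a specific scaled side $\tilde E'$ and pairing it with an endpoint $\mbn_1$ of the corresponding edge $E$, the linear functionals $f,g$ force any competing pair onto $E$ and $\tilde E'$, whereupon the length comparison $|\tilde E'|=N(\mbn_0)\tilde N(\mbn_0)|E|<|E|$ yields the contradiction. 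I would recommend abandoning the Euclidean metric for this step and following that route.
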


\begin{proof}
The idea is very similar to the 2D case.
Arguing by contradiction, we suppose that $S\setminus V(S)\neq \emptyset$, where $V(S)$ denotes the set of all vertices of $S^{conv}$.

Let $N$ be the Minkowski functional of the convex set $S^{conv}$ containing the origin; i.e., 
\eqq{N(\mbn):=\inf \{ r>0:\mbn \in rS^{conv}\} .}
Note that $N$ is a norm on $\R^3$ and $S^{conv}=\{ \mbn:N(\mbn)\le 1\}$, $\p S^{conv}=\{ \mbn:N(\mbn)=1\}$.
Then, we choose $\mbn_0\in S\setminus V(S)$ so that $N(\mbn_0)=\max \{ N(\mbn):\mbn\in S\setminus V(S)\}$.
Since $\mbn_0\neq \mbz$, we have $0<N(\mbn_0)\le 1$.
Let $F$ be (one of) the face of $S^{conv}$ on which $\mbn_0/N(\mbn_0) \in \p S^{conv}$ is located, let $\ti{F}:=N(\mbn_0)F$ (so that $\mbn_0\in \ti{F}$), and denote by $\ti{P}$ the plane containing $\ti{F}$.
(Note that $\ti{F}=F$ when $N(\mbn_0)=1$.)
We also find the (unique) linear functional $f$ on $\R^3$ satisfying $f\equiv 1$ on $F$.
Observe that $f$ coincides with $N$ in the conic region $\{ r\mbn:\mbn\in F,~r\geq 0\}$, that $S^{conv}\subset \{ \mbn:f(\mbn)\le 1\}$, and that $S^{conv}\cap \{ \mbn:f(\mbn)=1\} =F$.
Moreover, by the definition of $\mbn_0$, $\mbn\in S\setminus V(S)$ implies either $f(\mbn)<f(\mbn_0)$ or $\mbn\in \ti{F}$.

Next, fix an arbitrary point $\ti{\mbn}_*\in \mathrm{Int}(\ti{F})$ and define the function $\ti{N}$ on $\ti{P}$ by
\eqq{\ti{N}(\mbn):=\inf \{ r>0: \mbn-\ti{\mbn}_*\in r[\ti{F}-\ti{\mbn}_*]\} ,\qquad \mbn\in \ti{P}.}
Note that $\ti{F}=\{ \mbn\in \ti{P}:\ti{N}(\mbn)\le 1\}$, $\p \ti{F}=\{ \mbn\in \ti{P}:\ti{N}(\mbn)=1\}$.
By replacing $\mbn_0$ if necessary, we may assume that $\ti{N}(\mbn_0)=\max \{ \ti{N}(\mbn):\mbn\in [S\setminus V(S)]\cap \ti{F}\}$.
We have $0\le \ti{N}(\mbn_0)\le 1$, and $\ti{N}(\mbn_0)<1$ if $N(\mbn_0)=1$, because $S\cap \p F\subset V(S)$ by means of Proposition~\ref{prop:S1} (ii).
Let 
\eqq{\ti{F}':=\ti{\mbn}_*+\ti{N}(\mbn_0)[\ti{F}-\ti{\mbn}_*]}
be the polygon on $\ti{P}$ obtained by contracting $\ti{F}$ by the ratio of $\ti{N}(\mbn_0)$ with respect to the base point $\ti{\mbn}_*$.
By the definition, we see $[S\setminus V(S)]\cap \ti{F}\subset \ti{F}'$, and in particular, that $\mbn\in S\setminus V(S)$ implies either $f(\mbn)<f(\mbn_0)$ or $\mbn\in \ti{F}'$.

Then, noticing $\mbn_0\in \p \ti{F}'$, define $\ti{E}'$ as (one of) the side of $\ti{F}'$ on which $\mbn_0$ is located, and denote the corresponding side of $F$ by $E$.
We can find a linear functional $g$ on $\R^3$ satisfying $g\equiv 0$ on $E$ and $g>0$ on $F\setminus E$. 
It then follows from a simple geometric observation that $g\equiv g(\mbn_0)\ge 0$ on $\ti{E}'$ and $g>g(\mbn_0)$ on $\ti{F}'\setminus \ti{E}'$.
Finally, choose $\mbn_1\in V(S)$ so that $\mbn_1$ is an endpoint of $E$ and linearly independent with $\mbn_0$.

Let us consider the nonlinear interaction contributing to the mode $\mbn_0+\mbn_1$.
By Proposition~\ref{prop:S1} (iv) and the fact that $|\mbn_0|\neq |\mbn_1|$, the two modes $\mbn_0,\mbn_1$ of $\mathbf{u}(t_0)$ satisfy none of the conditions (i)--(iii) of Proposition~\ref{prop:two}, and hence have nonzero contribution at $\mbn_0+\mbn_1$.
On the other hand, 
\[ N(\mbn_0+\mbn_1)=f(\mbn_0+\mbn_1)=f(\mbn_0)+f(\mbn_1)=N(\mbn_0)+N(\mbn_1)=N(\mbn_0)+1>1, \]
which shows $\mbn_0+\mbn_1\not\in S^{conv}$.
Therefore, by \eqref{cond:Euler} there must be another pair $\{ \mbn_2,\mbn_3\}\subset S$ of distinct points which satisfies $\mbn_0+\mbn_1=\mbn_2+\mbn_3$ and creates nonzero contribution at $\mbn_0+\mbn_1$ through nonlinear interaction.
As we mentioned above, any two modes in $V(S)$ do not interact, and hence one of $\mbn_2$ and $\mbn_3$, say $\mbn_2$, is not a vertex of $S^{conv}$.
This implies that either $f(\mbn_2)<f(\mbn_0)$ or $\mbn_2\in \ti{F}'$.
But we have $f(\mbn_2)+f(\mbn_3)=f(\mbn_0)+f(\mbn_1)$ and $f(\mbn_3)\le 1=f(\mbn_1)$, so the only possible situation is that $\mbn_2\in \ti{F}'$ and $f(\mbn_3)=1$ (i.e., $\mbn_3\in F$).
We next consider the equality $g(\mbn_2)+g(\mbn_3)=g(\mbn_0)+g(\mbn_1)=g(\mbn_0)$.
Since $\mbn_3\in F$, we have $g(\mbn_3)\ge 0$ and then $g(\mbn_2)\le g(\mbn_0)$, which combined with $\mbn_2\in \ti{F}'$ leads to $g(\mbn_2)=g(\mbn_0)$ and $g(\mbn_3)=0$, namely, $\mbn_2\in \ti{E}'$ and $\mbn_3\in E$.
Now, Proposition~\ref{prop:S1} (ii) shows that $\mbn_1,\mbn_3$ must be two endpoints of $E$, and in particular $|\mbn_1-\mbn_3|=|E|$.
This, however, implies that 
\[ |E|=|\mbn_0-\mbn_2|\le |\ti{E}'|=N(\mbn_0)\ti{N}(\mbn_0)|E|<|E|, \]
which is a contradiction.

We therefore conclude that $S\setminus V(S)=\emptyset$.
\end{proof}

By Propositions~\ref{prop:BV} and \ref{prop:interior}, we see that any two modes of $\mathbf{u}(t)$ do not interact if $t\in I\setminus I_0$.
In particular, by \eqref{cond:Euler} we have $\p _t\mathbf{u}_\mbn(t)=\mbz$ for any $t\in I\setminus I_0$ and $\mbn\in S$.
Since $I\setminus I_0$ is dense in $I$ and $\mathbf{u}_\mbn$ is smooth in $t$, we have $\p _t \mathbf{u}\equiv \mbz$ on $I\times \R ^3$, and therefore $\mathbf{u}$ is stationary.
Finally, in view of Proposition~\ref{prop:BV} and Remark~\ref{rem:SIP} (iii), it is a Beltrami flow.

This is the end of the proof of Theorem~\ref{thm:main2}, and hence, Theorem~\ref{thm:main}.


\section{Viscosity and Coriolis effect taken into account}
\label{sec:NSC}

As a generalization of Theorem~\ref{thm:main}, let us consider characterization of finite-mode solutions ($\mathbf{u},p)$ to the following Navier-Stokes-Coriolis equations:
\eq{NSC}{
\p _t\mathbf{u}-\nu \Delta \mathbf{u} +\Omega \mbe^3\gaiseki \mathbf{u} + (\mathbf{u}\cdot \nabla )\mathbf{u}+\nabla p=\mbz ,\qquad \nabla \cdot \mathbf{u}=0,
}
where $\nu ,\Omega \in \R$ and $\mbe^3=(0,0,1)$.
(The sign of $\nu$ is not relevant, since finite-mode solutions are real analytic with the radius of analyticity equal to infinity.)
To remove the zero mode $\mathbf{u}_\mbz (t)$, we first observe that it satisfies 
\eq{eq:zeromode}{\partial_t\mathbf{u}_\mbz + \Omega \mbe^3\gaiseki \mathbf{u}_\mbz = \mbz .}
Given the initial value $\mbu _{\mbz ,*}=(u_{\mbz,*}^1,u_{\mbz,*}^2,u_{\mbz,*}^3)\in \R^3$ at some $t=t_*$, this can be explicitly solved:
\[ \mathbf{u}_\mbz (t)=\Big( u_{\mbz,*}^1\cos (\Omega (t\!-\!t_*)) +u_{\mbz,*}^2\sin (\Omega (t\!-\!t_*)), \, u_{\mbz,*}^2\cos (\Omega (t\!-\!t_*))-u_{\mbz,*}^1\sin (\Omega (t\!-\!t_*)),\,u_{\mbz,*}^3\Big) .\]
Using this, we define the new velocity field $\mathbf{v}$ and pressure $q$ by
\[ \mathbf{v}(t,\mbx)=\mathbf{u}\Big( t,\mbx+\int _{t_*}^t\mathbf{u}_\mbz (\tau )d\tau \Big) -\mathbf{u}_\mbz (t),\qquad q(t,\mbx) =p\Big( t,\mbx+\int _{t_*}^t\mathbf{u}_\mbz (\tau )d\tau \Big) ,\]
which is again a finite-mode solution of \eqref{NSC} and does not have the zero mode.
Conversely, for any given mean-zero solution $(\mathbf{v},q)$ of \eqref{NSC} and any prescribed zero mode $\mathbf{u}_\mbz (t)$ satisfying \eqref{eq:zeromode}, we obtain a solution $(\mathbf{u},p)$ by the inverse transformation (defined by simply replacing $\mathbf{u}_\mbz (t)$ with $-\mathbf{u}_\mbz (t)$).
Therefore, as for the Euler equations \eqref{Euler}, it suffices to consider characterization of mean-zero solutions; namely, solutions in $\mathcal{H}_I$.
Furthermore, we apply the projection $\hat{\HP}_\mbn$ to the equation for the Fourier coefficient vector $\mathbf{u}_\mbn(t)$ to obtain
\begin{equation}\label{cond:NSC}
\begin{gathered}
\p _t\mathbf{u}_\mbn + \nu |\mbn|^2 \mathbf{u}_\mbn + \Omega J_\mbn\mathbf{u}_\mbn + \frac{i}{2}\hat{\HP}_\mbn \sum _{\mat{\mbn_1,\mbn_2\in S\\ \mbn_1+\mbn_2=\mbn}}\big[ (\mathbf{u}_{\mbn_1}\cdot \mbn_2)\mathbf{u}_{\mbn_2}+(\mathbf{u}_{\mbn_2}\cdot \mbn_1)\mathbf{u}_{\mbn_1}\big] =\mbz ,\\
\mbn\cdot \mathbf{u}_\mbn=0,\qquad t\in I,\quad \mbn\in \ti{S}=S\cup (S+S)\setminus \{ \mbz \} ,
\end{gathered}
\end{equation}
where 
\[ J_\mbn = \frac{n^3}{|\mbn|^2}\begin{pmatrix} 0 & -n^3 & n^2 \\[-5pt] n^3 & 0 & -n^1 \\[-5pt] -n^2 & n^1 & 0 \end{pmatrix} ,\qquad \mbn=(n^1,n^2,n^3)\in \mathbb{R}^3\setminus \{ \mbz \} \]
is the matrix given by $J_\mbn\mathbf{v}=\hat{\HP}_\mbn \big[ \mbe^3\gaiseki (\hat{\HP}_\mbn\mathbf{v})\big]$ ($\mathbf{v}\in \mathbb{C}^3$).
Given a solution $\{ \mathbf{u}_\mbn(t)\} _{\mbn\in S}$, we can recover the Fourier coefficient $\{ p_\mbn(t)\}_{\mbn\in S_p}$ for the pressure $p$ by
\[ p_\mbn(t)=\frac{1}{|\mbn|^2}\Big( i\Omega \big( n^2u_\mbn ^1(t)-n^1u_\mbn^2(t)\big) -\sum _{\mat{\mbn_1,\mbn_2\in S\\ \mbn_1+\mbn_2=\mbn}}(\mathbf{u}_{\mbn_1}(t)\cdot \mbn _2)(\mathbf{u}_{\mbn_2}(t)\cdot \mbn _1)\Big) ,\qquad t\in I,~~\mbn\in \ti{S},\]
where $\mathbf{u}_\mbn(t)=(u_\mbn^1(t),u_\mbn^2(t),u_\mbn^3(t))$.
In this way, as in the Euler case, characterization of finite-mode solutions of \eqref{NSC} is reduced to that of solutions of \eqref{cond:NSC} belonging to $\mathcal{H}_I$.

We shall prove the following theorem for \eqref{cond:NSC}, which includes Theorem~\ref{thm:main} as a special case $\nu =\Omega =0$.
\begin{thm}\label{thm:NSC}
Let $I\subset \R$ be an open interval
.
Then, $\mathbf{u}=\mathbf{u}(t,\mbx)\in \Sc{H}_I$ is a solution of \eqref{cond:NSC} on $I$ if and only if $\mbu$ evolves according to the linear equation $\p _t\mathbf{u}_\mbn + \nu |\mbn|^2 \mathbf{u}_\mbn + \Omega J_\mbn\mathbf{u}_\mbn =\mbz$ 
and satisfies one of the following for some $t_*\in I$ (or equivalently, for any $t_*\in I$):
\begin{enumerate}
\item The Fourier support $S$ of $\mathbf{u}(t_*)\in \mathcal{H}$ is a subset of a line passing through the origin.
\item $S$ has two linearly independent points and is a subset of a plane $P$ containing the origin.
Moreover, one of the following holds:
\begin{enumerate}
\item $\Omega \!=\!0$ or $P\!=\!P_3:=\{ \mbn \!\in \!\mathbb{R}^3:\mbn \cdot \mbe^3\!=\!0\}$, and $\mathbf{u}(t_*)$ is perpendicular to $P$ everywhere.
\item $\mbu (t_*)=\mbu^\parallel (t_*)+u^\perp (t_*)\mbe^\perp$, where $\mbe^\perp$ is (one of) the unit normal vector to $P$ and $\mbu^\parallel (t_*)$, $u^\perp (t_*)$ satisfy the following:
\begin{itemize}
\item $\mbu^\parallel (t_*)\in \mathcal{H}$ is parallel to $P$ everywhere, and its Fourier support is a subset of a circle on $P$ (with radius $\lambda >0$) centered at the origin and contains at least four points.
\item $u^\perp (t_*,\mbx )=Q(\omega (\mbx))-\big\langle Q(\omega )\big\rangle$, where $\omega (\mbx)$ is the scalar function defined by $\nabla \gaiseki \mbu^\parallel (t_*)=\lambda \omega \mbe^\perp$ and 
\[ \qquad\qquad\qquad\qquad \begin{cases}
\text{$Q$ is a polynomial with real coefficients} &\text{if $\Omega =0$ or $P=P_3$, and $\nu =0$;}\\
\text{$Q(\omega )=\kappa \omega$ for some $\kappa \in \mathbb{R}$} &\text{if $\Omega =0$ or $P=P_3$, and $\nu \neq 0$;}\\
\text{$Q(\omega )$ is either $\omega$ or $-\omega$} &\text{if $\Omega \neq 0$ and $P\neq P_3$.}
\end{cases}\] 
\end{itemize}
\end{enumerate}
\item $S$ has three linearly independent points and is a subset of a sphere centered at the origin, and $\mathbf{u}(t_*)$ is a Beltrami flow.
\end{enumerate}
\end{thm}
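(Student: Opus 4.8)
The plan is to reduce \eqref{cond:NSC} to the Euler analysis of Sections~\ref{sec:two}--\ref{sec:S} together with an analysis of the linear flow $\p _t\mbu_\mbn+\nu|\mbn|^2\mbu_\mbn+\Omega J_\mbn\mbu_\mbn=\mbz$. The reduction to mean-zero solutions $\mbu\in\Sc{H}_I$ is already carried out above, and since \eqref{cond:NSC} is an ODE system with polynomial nonlinearity and $t$-independent linear part, its solutions have components real analytic in $t$; in particular the set $I_0$ of \eqref{def:I_0} is discrete in $I$, and the Fourier support of $\mbu(t,\cdot)$ equals $S$ for $t\in I\setminus I_0$. The decisive point is that the operator $\nu|\mbn|^2+\Omega J_\mbn$ is \emph{diagonal} in the Fourier frequency: it creates no new frequency and transfers nothing between distinct ones. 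Hence the whole geometric machinery of Sections~\ref{sec:two}--\ref{sec:S} carries over verbatim to \eqref{cond:NSC}, since those arguments use the equation only through (a) the vanishing of the nonlinear output at a frequency $\mbn\notin S$, which \eqref{cond:NSC} forces exactly as in the Euler case, and (b) the real analyticity in $t$; neither is affected by the linear terms. We thus obtain, as in Theorem~\ref{thm:main}, that at a fixed $t_*\in I\setminus I_0$ the support $S$ is one of: (i) a subset of a line through the origin; (ii) planar with two linearly independent points, and either $\mbu(t_*)\perp P$ everywhere, or $\mbu^\parallel(t_*)$ is supported on a circle on $P$ centered at the origin with at least four points (Proposition~\ref{prop:planar}(i)); or (iii) spanning $\R^3$, inscribed in a sphere, with no non-vertex points and all vertex coefficient vectors $BV^+$ or all $BV^-$ (Propositions~\ref{prop:S1},~\ref{prop:BV},~\ref{prop:interior}).

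In cases (i) and (iii), Proposition~\ref{prop:two} shows that no two modes of $\mbu(t_*)$ interact (linearly dependent frequencies in (i); equal radii and common handedness in (iii)), so \eqref{cond:NSC} collapses to $\p _t\mbu_\mbn=-(\nu|\mbn|^2+\Omega J_\mbn)\mbu_\mbn$, first on $I\setminus I_0$ and then, by density and analyticity, on all of $I$. This flow preserves the divergence-free condition because $\mbn\cdot(J_\mbn\mbu_\mbn)=0$, and it preserves the structure: in case (i) the frequencies stay on the line, while in case (iii) the radius $|\mbn|$ is constant over $S$ and $J_\mbn$ restricts to the scalar $\mp i\,n^3/|\mbn|$ on $BV^\pm$ vectors at $\mbn$, so each $\mbu_\mbn(t)$ remains $BV^\pm$ and $\mbu(t)$ is Beltrami for every $t$. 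Since a matrix exponential is invertible, $I_0=\emptyset$ in these cases, so the stated form holds for every $t_*$.

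The only genuinely new work is case (ii) with $\mbu(t_*)$ not perpendicular to $P$; write $\mbu=\mbu^\parallel+u^\perp\mbe^\perp$ as before. A short computation with $J_\mbn$ on the frame $\mbe^\parallel_\mbn,\mbe^\perp$ turns the horizontal and vertical components of \eqref{cond:NSC} into, at each $\mbn$,
\[ \p _tu^\parallel_\mbn+\nu|\mbn|^2u^\parallel_\mbn-\Omega\tfrac{n^3}{|\mbn|}u^\perp_\mbn=0,\qquad \p _tu^\perp_\mbn+\nu|\mbn|^2u^\perp_\mbn+\Omega\tfrac{n^3}{|\mbn|}u^\parallel_\mbn+\big[(\mbu^\parallel\cdot\nabla )u^\perp\big]_\mbn=0, \]
where Lemma~\ref{lem:two} gives that the horizontal nonlinear output vanishes identically and the vertical one equals $[(\mbu^\parallel\cdot\nabla)u^\perp]_\mbn$; in contrast to the Euler case, the Coriolis term couples $\mbu^\parallel$ and $u^\perp$. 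One first reruns the proof of Proposition~\ref{prop:planar}(i), which survives unchanged, to get that $S_\parallel$ lies on a circle of radius $\lambda$ with at least four points; from the first equation and the vanishing of its nonlinear term, $\mbu^\parallel(t)=e^{-\nu\lambda^2(t-t_*)}\mbu^\parallel(t_*)$ and $\om(t)=e^{-\nu\lambda^2(t-t_*)}\om(t_*)$ whenever $\Omega=0$ or $P=P_3$ (the regime in which $J_\mbn$ does not contribute). Next one extends the layered-structure induction of Lemmas~\ref{lem:planar1}--\ref{lem:planar2}: the level-by-level description of the Fourier support of $u^\perp$ relative to the Minkowski functional of $S^{conv}_\parallel$ goes through, since $\nu|\mbn|^2u^\perp_\mbn$ and $\Omega\tfrac{n^3}{|\mbn|}u^\parallel_\mbn$ do not move frequencies (the latter being supported in $S_\parallel$). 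When $\Omega=0$ or $P=P_3$ this yields $u^\perp(t)=Q_t(\om(t))-\langle Q_t(\om(t))\rangle$ for a real polynomial $Q_t$ with possibly $t$-dependent coefficients; substituting into $\p _tu^\perp-\nu\Delta u^\perp+(\mbu^\parallel\cdot\nabla)u^\perp=0$ and using $(\mbu^\parallel\cdot\nabla)\om=0$ (the identity \eqref{eq:polyom}) and $\p _t\om=-\nu\lambda^2\om$ gives $(\p _tQ_t)(\om)=\nu Q_t''(\om)|\nabla\om|^2$, which forces $Q_t$ to be independent of $t$, and of degree $\le1$ when $\nu\neq0$ (and unrestricted when $\nu=0$). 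When $\Omega\neq0$ and $P\neq P_3$, the horizontal equation at an interior mode $\ti{\mbn}$ of $u^\perp$ (where $u^\parallel_{\ti{\mbn}}=0$) forces $\Omega\tfrac{\ti{n}^3}{|\ti{\mbn}|}u^\perp_{\ti{\mbn}}=0$, i.e.\ $\ti{n}^3=0$; since the circle $S_\parallel$ necessarily carries frequencies with $n^3\neq0$, this precludes any mode of $u^\perp$ at level $\ge2$, so $u^\perp=\beta_1(t)\om(t)$, and combining the two equations above then yields $\beta_1'=0$ and $\beta_1^2=1$, i.e.\ $Q(\om)=\pm\om$ (equivalently, a planar Beltrami flow). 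In every sub-case the nonlinear term then vanishes, \eqref{cond:NSC} reduces to the linear flow, $I_0=\emptyset$, and the stated form holds for every $t_*$. The converse implications follow by reversing these computations, the only nontrivial ingredient being $(\mbu^\parallel\cdot\nabla)Q(\om)=Q'(\om)(\mbu^\parallel\cdot\nabla)\om=0$.

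The main obstacle is this last case: the viscous and Coriolis terms destroy the dynamical decoupling of $\mbu^\parallel$ and $u^\perp$ available for the Euler equations, so one must carry the coupling through the inductive peeling-off of $\beta_q\om^q$ and isolate the exact obstruction---the uncancelled viscous correction $\propto\nu\,\om^{q-2}|\nabla\om|^2$ at level $q$, and the constraint $n^3=0$ on interior modes in the rotating case---that collapses the polynomial $Q$ to degree at most one in the viscous or genuinely rotating regimes. Everything else is a routine transcription of the Euler arguments, the linear flow $\p _t\mbu_\mbn+\nu|\mbn|^2\mbu_\mbn+\Omega J_\mbn\mbu_\mbn=\mbz$ being essentially along for the ride.
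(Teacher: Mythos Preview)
Your outline is sound in cases (i) and (iii), and in the planar subcase with $\Omega=0$ or $P=P_3$ and $\nu=0$, but there are two genuine gaps in the new regimes.

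\textbf{Case B ($\Omega\neq0$, $P\neq P_3$).} You claim the proof of Proposition~\ref{prop:planar}(i) ``survives unchanged'', but it does not. The horizontal component of \eqref{cond:NSC} is \eqref{cond:parallel2}, which carries the Coriolis term $-\Omega\tfrac{n^3}{|\mbn|}u^\perp_\mbn\mbe^\parallel_\mbn$. The Euler argument for Proposition~\ref{prop:planar}(i) evaluates the horizontal equation at $\mbn_1+\mbn_2$ for adjacent boundary points $\mbn_1,\mbn_2\in S_\parallel$ (so $N(\mbn_1+\mbn_2)=2$) and uses that $\mbn_1+\mbn_2\notin S_\parallel$ to conclude the horizontal nonlinear output vanishes. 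But if $q=\max\{N(\mbn):\mbn\in S_\perp\}\ge 2$, the frequency $\mbn_1+\mbn_2$ may lie in $S_\perp$, and then the Coriolis term is nonzero there and spoils the conclusion $|\mbn_1|=|\mbn_2|$. Your later $\tilde n^3=0$ argument at ``interior modes'' presupposes the horizontal nonlinear term is already zero, which in turn presupposes the circle condition---so the logic is circular. The paper breaks this loop by first proving $q\le 1$ (hence $S_\perp\subset S^{conv}_\parallel$) via a separate bootstrap: it locates $q\mbn_1\in S_\perp$ for some vertex $\mbn_1\notin P_3$, uses the horizontal equation at $q\mbn_1$ to force $q<2$, and only then runs the Proposition~\ref{prop:planar}(i) argument knowing the Coriolis term cannot appear at level $2$.

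\textbf{Case A ($\nu\neq0$, $\Omega=0$ or $P=P_3$).} Your assertion that the layered induction of Lemmas~\ref{lem:planar1}--\ref{lem:planar2} yields a polynomial representation $u^\perp(t)=Q_t(\om(t))-\langle Q_t(\om(t))\rangle$ is not justified. After peeling off the top layer $\beta(t)\om^q$, the remainder $\zeta$ satisfies \eqref{eq:zeta-2} with the viscous source $\nu\beta_*e^{-\nu q^2\lambda^2(t-t_*)}(q^2\lambda^2\om^q+\Delta[\om^q])$, whose Fourier support fills all of $qS^{conv}_\parallel$. The Lemma~\ref{lem:planar1} argument for the next layer of $\zeta$ examines the equation at frequencies of level $\le q$, where this source is nonzero and disrupts the recursion; so you cannot simply iterate. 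Even granting the polynomial form, the identity $(\partial_tQ_t)(\om)=\nu Q_t''(\om)|\nabla\om|^2$ does not by itself force $\deg Q_t\le 1$: comparing top-level coefficients at the vertex $q\mbn_j$ only gives an ODE for the leading coefficient, not its vanishing. The paper proceeds differently: it does not iterate, but compares the Fourier coefficients of both sides of \eqref{eq:zeta-2} at the non-vertex boundary points $\tilde{\mbn}_{j,k}$, $1\le k\le q-1$, obtaining a telescoping sum whose terms are all nonzero of a common sign---this is the mechanism that forces $q=1$.
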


\begin{rem}
As is evident from the proof below, the same result holds if the viscosity $-\nu \Delta$ of the equation \eqref{NSC} is replaced by the fractional Laplacian $\nu (-\Delta)^\alpha$ with any $\alpha >0$.
\end{rem}
\begin{rem}\label{rem:thm:NSC}
Let us confirm that each of the conditions (i)--(iii) holds for any $t_*\in I$ once it holds some $t_*$, provided that $\mbu (t)$ evolves linearly: 
\eq{linearsol}{\mathbf{u}_\mbn(t)=e^{-\nu (t-t_*)|\mbn |^2}e^{-\Omega (t-t_*)J_\mbn}\mathbf{u}_\mbn(t_*),\qquad t\in I,~\mbn\in S.}
Note that the Fourier support of $\mbu (t)\in \mathcal{H}$ is equal to that of $\mbu\in \mathcal{H}_I$ for any $t\in I$.
Then, the property (i) is preserved under the flow.
Next, we see that the Coriolis term $\Omega J_\mbn \mbu_\mbn$ vanishes when $\Omega =0$ or $\mbn \in P_3$.
Therefore, the property (ii)-(a) is also preserved.
If $\Omega =0$ or $P=P_3$ and $\nu =0$, $\mbu (t)$ is independent of $t$.
In the other cases, the condition (ii)-(b) or (iii) implies that all points $\mbn$ of the Fourier support of $\mbu$ are equidistant from the origin, and so the effect of the viscosity term is simply multiplication by the same constant for all Fourier modes.
Then, (ii)-(b) is preserved if $\Omega =0$ or $P=P_3$ (and $\nu \neq 0$).
Finally, we notice that the additional condition $Q(\om )=\pm \om$ in (ii)-(b) is equivalent to saying that $\mbu^\parallel (t_*)+Q(\om )\mbe^\perp$ is a Beltrami flow.
Since the effect of the Coriolis term on each Fourier coefficient vector $\mbu_\mbn$ is simply rotation on the plane perpendicular to $\mbn$ (see \eqref{eq:Coriolis} and following comments
), in view of the characterization Lemma~\ref{lem:Beltrami}, a Beltrami flow is changed to another Beltrami flow (of the same sign) by the linear evolution \eqref{linearsol}.

The fact we have just verified is sufficient to conclude the \emph{if} part of Theorem~\ref{thm:NSC}, because each of the conditions (i)--(iii) at some time $t_*\in I$ implies that there is no nonlinear interaction (i.e., the last term on the left-hand side of \eqref{cond:NSC} vanishes) at $t=t_*$ (see the proof of the \emph{if} part of Theorem~\ref{thm:main}).
\end{rem}

In the rest of this section, we shall prove the \emph{only if} part of Theorem~\ref{thm:NSC}. 
We first recall the following property of the matrix $J_\mbn$ (see, e.g., \cite[Section~2]{BMN99}):
\eq{eq:Coriolis}{\mbn\cdot \mathbf{u}_\mbn=0 \quad \Rightarrow \quad \left\{ \begin{aligned}
J_\mbn\mathbf{u}_\mbn&=\frac{n^3}{|\mbn|}\Big( \frac{\mbn}{|\mbn|}\gaiseki \mathbf{u}_\mbn\Big) ,\\
e^{-\Omega tJ_\mbn}\mathbf{u}_\mbn &=\cos \Big( \Omega t\frac{n^3}{|\mbn|}\Big) \mathbf{u}_\mbn-\sin \Big( \Omega t\frac{n^3}{|\mbn|}\Big) \Big( \frac{\mbn}{|\mbn|}\gaiseki \mathbf{u}_\mbn\Big) .\end{aligned}\right.
}
In particular, we see the following:
\begin{itemize}
\item If $\mbn\in P_3=\{ \mbn :n^3=0\}$ and $\mbn\cdot \mathbf{u}_\mbn=0$, then $J_\mbn\mathbf{u}_\mbn=0$ and $e^{-\Omega tJ_\mbn}\mathbf{u}_\mbn=\mathbf{u}_\mbn$.
\item If $\mbn \not\in P_3$, then on $\{ \mathbf{v}\in \mathbb{C}^3:\mbn\cdot \mathbf{v}=0\}$, $J_\mbn$ acts as rotation by $\pm \pi /2$ plus multiplication by a non-zero real constant $n^3/|\mbn|$, and $e^{-\Omega tJ_\mbn}$ acs as rotation by $\pm \Omega tn^3/|\mbn|$.
\item $J_\mbn$ and $e^{-\Omega tJ_\mbn}$ commute with the operation $i\mbn \times$.
In particular, each of the sets of positive and negative Beltrami vectors $BV^\pm$ at $\mbn$ is invariant under $J_\mbn$ and $e^{-\Omega tJ_\mbn}$.
\end{itemize}

\begin{proof}[Proof of the \emph{only if} part of Theorem~\ref{thm:NSC}]
When $S$ is a subset of a line, there is no nonlinear interaction, i.e., the nonlinear term in \eqref{cond:NSC} vanishes for each $\mbn$.
Then, the evolution for each coefficient vector $\mathbf{u}_\mbn(t)$ is decoupled and becomes linear, verifying the case (i).
When $S$ has three directions, we see that $\mathbf{u}(t)$ must be a Beltrami flow for each $t$, by exactly the same reduction as given in Section~\ref{sec:S}.
This again means that the nonlinear term in \eqref{cond:NSC} vanishes and each mode evolves linearly, which gives the case (iii).
Also, when $S$ is planar, $\nu =0$, $\Omega \neq 0$ but $P=P_3$, the Coriolis term $\Omega \mbe^3\gaiseki \mathbf{u}$ vanishes identically, which leads to the same situation as for the Euler equation \eqref{cond:Euler}.
Hence, the only situation requiring additional consideration is the planar case (ii), with either $\nu \neq 0$ or ``$\Omega \neq 0$, $P\neq P_3$''.
As we will see below, a major part of the argument in Section~\ref{sec:2D} remains valid in the present context.

First, assume that $\mathbf{u}$ is perpendicular to $P$ everywhere at every time.
Since there is no nonlinear interaction for such a solution, each Fourier mode evolves linearly.
This excludes the case that $\Omega \neq 0$ and $P\neq P_3$, for which the coefficient vector at any mode $\mbn$ with $n^3\neq 0$ is rotating and cannot be perpendicular to $P$ for all time.

Hereafter, we assume that the horizontal component of $\mathbf{u}$ (the component parallel to $P$) is not identically zero.
As in Lemma~\ref{lem:two}, we write 
\[ \mathbf{u}_\mbn(t)=u_\mbn^\parallel (t)\mbe^\parallel_\mbn +u_\mbn^\perp (t)\mbe^\perp \quad (\mbn\in S),\qquad \mbe^\parallel_\mbn :=\mbe^\perp \gaiseki \tfrac{n}{|n|},\]
so that the set $S_\parallel :=\{ \mbn\in S: u_\mbn^\parallel \not\equiv 0\}$ is non-empty and symmetric.
Taking the horizontal component of the equation \eqref{cond:NSC} and setting $\mbu^\parallel_\mbn (t):=u^\parallel_\mbn (t)\mbe^\parallel_\mbn$, we obtain that
\eq{cond:parallel2}{\p_t\mbu^\parallel _\mbn +\nu |\mbn|^2\mbu^\parallel_\mbn -\Omega \frac{n^3}{|\mbn|}u^\perp _\mbn \mbe^\parallel_\mbn +\frac{i}{2}\hat{\HP}_{\mbn}\sum _{\mat{\mbn_1,\mbn_2\in S_\parallel \\ \mbn_1+\mbn_2=\mbn}}\big[ (\mbu^\parallel_{\mbn_1}\cdot \mbn_2)\mbu^\parallel _{\mbn_2}+(\mbu^\parallel_{\mbn_2}\cdot \mbn_1)\mbu^\parallel _{\mbn_1}\big] =\mbz ,
}
which will be used as the counterpart of \eqref{cond:parallel}.

We begin with observing that $S_\parallel$ is not contained in a line (and hence it has at least four points).
In fact, this can be shown in exactly the same way as for the Euler case; recall the contradiction argument for proving Proposition~\ref{prop:planar}(i).
Now, the theorem follows once the following claims (1)--(3) are verified:
\begin{itemize}
\item[(1)] $S_\parallel$ is contained in a circle centered at the origin.
\item[(2)] The set $S_\perp :=\{ \mbn\in S: u_\mbn^\perp \not\equiv 0\}$ is a subset of $S_\parallel$ (and hence $S=S_\parallel$).
\item[(3)] Each $\mathbf{u}_\mbn(t)$ evolves linearly, and  
\[ \qquad \text{$u_\mbn^\perp(t_*)=i\kappa u_\mbn^\parallel(t_*)$ for some} \left\{ \begin{alignedat}{2} &\kappa \in \R &\quad &\text{in \textbf{Case A}: $\Omega =0$ or $P=P_3$, and $\nu \neq 0$},\\
&\kappa \in \{ \pm 1\} & &\text{in \textbf{Case B}: $\Omega \neq 0$ and $P\neq P_3$}. \end{alignedat} \right. \]
\end{itemize}
%
We shall prove these claims for the two \textbf{Cases A} and \textbf{B} separately.

\medskip
{\bf Case A}: $\Omega =0$ or $P=P_3$, and $\nu \neq 0$.

In this case, the third term on the left-hand side of the equation \eqref{cond:parallel2} vanishes, so the equation for the horizontal component $\mbu^\parallel (t)$ is decoupled from that for the vertical part $u^\perp (t)$.
Then, the claim (1) can be verified by the same argument as in the Euler case; see the proof of Proposition~\ref{prop:planar}(i).
In particular, there is no pair of frequencies in $S$ creating the horizontal component through the nonlinear interaction, so the horizontal component evolves linearly.
This allows us to set the horizontal component of $\mathbf{u}(t,\mbx)$ as follows:
\[ \mathbf{u}^\parallel (t,\mbx)=e^{-\nu \lambda ^2(t-t_*)}\mathbf{u}^\parallel (t_*,\mbx):=e^{-\nu \lambda ^2(t-t_*)}\sum _{j=0}^{p-1}\alpha _j\mbe^\parallel_{\mbn_j}e^{i\mbn_j\cdot \mbx}, \]
where $\lambda >0$ is the radius of the circle containing $S_\parallel$, $p\geq 4$ is an even integer, $\mbn_0,\dots ,\mbn_{p-1}$ denote all the points of $S_\parallel$ located in this order, and $\alpha _j\in \mathbb{C}\setminus \{ 0\}$ satisfies $i\alpha _{j+(p/2)}=\overline{i\alpha_j}$ for $0\leq j<p/2$.
The equation for the vertical component $u^\perp (t,\mbx)=\sum _{\mbn\in S_\perp}u^\perp _\mbn(t)e^{i\mbn\cdot \mbx}$ is given by
\eq{cond:NSCp}{
\p _tu^\perp -\nu \Delta u^\perp +e^{-\nu \lambda ^2(t-t_*)}\big( \mathbf{u}^\parallel (t_*)\cdot \nabla \big) u^\perp =0,&\qquad t\in I,}
which is the counterpart of \eqref{cond:Euler^h} in the Euler case.

In order to prove the claims (2) and (3), we assume $u^\perp \not\equiv 0$ and set $q:=\max \{ N(\mbn ):\mbn \in S_\perp \} >0$, where $N$ is the Minkowski functional of $S^{conv}_\parallel$ on the plane $P$.
Then, exactly the same argument as for Lemma~\ref{lem:planar1} shows that $q$ must be a positive integer and $S_\perp \cap \p [qS^{conv}_\parallel ]=\{ \ti{\mbn}_{j,k} : j=0,\dots ,p-1,\,k=0,\dots ,q-1 \}$, where $\ti{\mbn}_{j,k}=(q-k)\mbn_j+k\mbn_{j+1}$. 
In contrast to the Euler case, it will turn out that $q>1$ is prohibited due to the presence of the viscosity term in \eqref{cond:NSCp}.

Following the proof of Lemma~\ref{lem:planar2}, we first obtain the same expression of $\{ u^\perp _{\ti{\mbn}_{j,k}}\} _{j,k}$ as in the Euler case:
\[ u^\perp _{\ti{\mbn}_{j,k}}(t)=\begin{pmatrix} q \\ k\end{pmatrix} (i\alpha_j)^{q-k}(i\alpha _{j+1})^{k}\beta (t),\qquad 0\leq j\leq p-1,\quad 0\leq k\leq q-1,\quad t\in I,\]
for some non-zero smooth function $\beta :I\to \R$.
Similarly as before, the function $\om (\mbx)$ satisfying $\nabla \gaiseki \mathbf{u}^\parallel (t_*,\mbx)=\lambda \om (\mbx)\mbe^\perp$ is explicitly given by $\om (\mbx)=\sum _{j=0}^{p-1}i\alpha_je^{i\mbn_j\cdot \mbx}$.
Recalling that
\[ \om (\mbx)^q=\sum _{j=0}^{p-1}\sum _{k=0}^{q-1}\begin{pmatrix} q \\ k \end{pmatrix}(i\al _j)^{q-k}(i\al _{j+1})^ke^{i\ti{\mbn}_{j,k}\cdot \mbx}+\eta (\mbx)\]
for some function $\eta $ whose Fourier support is finite and contained in $\{ \mbn\in P:N(\mbn)<q\}$, we introduce a new function $\zeta (t,\mbx):=u^\perp (t,\mbx)-\beta (t)\om (\mbx)^q$ whose Fourier support is contained in $\{ \mbn\in P:N(\mbn)<q\}$.
From \eqref{cond:NSCp} and that $(\mathbf{u}^\parallel (t_*)\cdot \nabla )\om ^q=0$, 
\[ \p_t \zeta -\nu \Delta \zeta +e^{-\nu \lambda^2(t-t_*)}(\mathbf{u}^\parallel (t_*)\cdot \nabla )\zeta = -\beta'(t)\om^q+\nu \beta (t)\Delta [\om ^q].\]
Since all the terms but the last one on the left-hand side have Fourier supports in $qS^{conv}_\parallel$, in the same manner as we did in the proof of Lemma~\ref{lem:planar2}, we can show that the Fourier support of $\zeta$ is actually contained in $(q-1)S^{conv}_\parallel$ (and $\zeta \equiv 0$ if $q=1$).
Comparing the Fourier coefficients of the both sides at $\ti{\mbn}_{0,0}$ as before, we have
\[ 0=-\beta'(t)(i\alpha_0)^q+ \nu \beta (t) (-q^2\lambda^2)(i\alpha_0)^q.\]
Solving this, we have $\beta (t)=\beta _*e^{-\nu q^2\lambda^2(t-t_*)}$ for some $\beta _*\in \mathbb{R}\setminus \{ 0\}$, which gives 
\eq{eq:zeta-2}{\p_t \zeta -\nu \Delta \zeta +e^{-\nu \lambda^2(t-t_*)}(\mathbf{u}^\parallel (t_*)\cdot \nabla )\zeta = \nu \beta _*e^{-\nu q^2\lambda^2(t-t_*)}\big( q^2\lambda^2\om^q+\Delta [\om ^q]\big) .}

Now, we shall prove $q=1$.
Suppose $q>1$, then the Fourier coefficient of the right-hand side of \eqref{eq:zeta-2} does not vanish at $\ti{\mbn}_{j,k}$ for any $0\leq j\leq p-1$ and $1\leq k\leq q-1$; in fact, it is given by
\[ \nu \beta _*e^{-\nu q^2\lambda^2(t-t_*)}\begin{pmatrix} q \\ k \end{pmatrix}(i\al _j)^{q-k}(i\al _{j+1})^k \big( |\ti{\mbn}_{j,0}|^2-|\ti{\mbn}_{j,k}|^2\big) . \]
On the other hand, the Fourier coefficient of the left-hand side of \eqref{eq:zeta-2} at these frequencies can be computed as
\eqq{
&e^{-\nu \lambda^2(t-t_*)}\Big( \mathbf{u}^\parallel_{\mbn_j}(t_*)\cdot i\big[ (q-k-1)\mbn_j+k\mbn_{j+1}\big]\Big) \zeta _{(q-k-1)\mbn_j+k\mbn_{j+1}}\\
&\qquad\qquad +e^{-\nu \lambda^2(t-t_*)}\Big( \mathbf{u}^\parallel_{\mbn_{j+1}}(t_*)\cdot i\big[ (q-k)\mbn_j+(k-1)\mbn_{j+1}\big]\Big) \zeta _{(q-k)\mbn_j+(k-1)\mbn_{j+1}}\\
&= e^{-\nu \lambda^2(t-t_*)}\big[ (\mbn_j \gaiseki \mbn_{j+1})\cdot \mbe^\perp\big] \Big( i\alpha_jk\zeta _{(q-k-1)\mbn_j+k\mbn_{j+1}}-i\alpha_{j+1}(q-k)\zeta _{(q-k)\mbn_j+(k-1)\mbn_{j+1}}\Big) .
}
(Note that the only possible pairs $(\mbn',\ti{\mbn}')$ with $\mbn'\in S_\parallel$, $\ti{\mbn}'\in (q-1)S^{conv}_\parallel$, $\mbn'+\ti{\mbn}'=\ti{\mbn}_{j,k}$ are $(\mbn_j,(q-k-1)\mbn_j+k\mbn_{j+1})$ and $(\mbn_{j+1},(q-k)\mbn_j+(k-1)\mbn_{j+1})$.)
Comparing the Fourier coefficients of both sides of \eqref{eq:zeta-2} at these frequencies, we obtain the equality
\eqq{
&i\alpha_jk\zeta _{(q-k-1)\mbn_j+k\mbn_{j+1}}-i\alpha_{j+1}(q-k)\zeta _{(q-k)\mbn_j+(k-1)\mbn_{j+1}}\\
&=\nu \beta _*e^{-\nu (q^2-1)\lambda^2(t-t_*)}\begin{pmatrix} q \\ k \end{pmatrix}(i\al _j)^{q-k}(i\al _{j+1})^k\frac{|\ti{\mbn}_{j,0}|^2-|\ti{\mbn}_{j,k}|^2}{(\mbn_j\gaiseki \mbn_{j+1})\cdot \mbe^\perp}
}
for each $0\leq j\leq p-1$ and $1\leq k\leq q-1$.
Noticing $\big( \begin{smallmatrix} q\\ k\end{smallmatrix}\big)=\frac{q}{q-k}\big( \begin{smallmatrix} q-1\\ k\end{smallmatrix}\big) =\frac{q}{k}\big( \begin{smallmatrix} q-1\\ k-1\end{smallmatrix}\big)$, we divide both sides of the above equality by $\frac{k(q-k)}{q}\big( \begin{smallmatrix} q \\ k \end{smallmatrix}\big) (i\al _j)^{q-k}(i\al _{j+1})^k$ to have
\eqq{
&\frac{\zeta _{(q-k-1)\mbn_j+k\mbn_{j+1}}}{\big( \begin{smallmatrix} q-1 \\ k \end{smallmatrix}\big) (i\al _j)^{q-k-1}(i\al _{j+1})^k}-\frac{\zeta _{(q-k)\mbn_j+(k-1)\mbn_{j+1}}}{\big( \begin{smallmatrix} q-1 \\ k-1 \end{smallmatrix}\big) (i\al _j)^{q-k}(i\al _{j+1})^{k-1}}\\
&\quad =\nu \beta _*e^{-\nu (q^2-1)\lambda^2(t-t_*)}\frac{q}{k(q-k)}\frac{|\ti{\mbn}_{j,0}|^2-|\ti{\mbn}_{j,k}|^2}{(\mbn_j\gaiseki \mbn_{j+1})\cdot \mbe^\perp}.
}
Summing up these equalities for $0\leq j\leq p-1$ and $1\leq k\leq q-1$, we obtain
\[ 0=q\nu \beta _*e^{-\nu (q^2-1)\lambda^2(t-t_*)}\sum _{j=0}^{p-1}\sum _{k=1}^{q-1}\frac{|\ti{\mbn}_{j,0}|^2-|\ti{\mbn}_{j,k}|^2}{k(q-k)\big[ (\mbn_j\gaiseki \mbn_{j+1})\cdot \mbe^\perp \big]}.\]
Since all the terms in the sum are non-zero and have the same sign, this is a contradiction. 
Therefore, it holds that $q=1$ and $\zeta \equiv 0$, and thus $u^\perp (t_*,\mbx )=\beta _*\om (\mbx )$ for some $\beta _*\in \mathbb{R}\setminus \{ 0\}$.
This shows the claims (2) and (3).

\medskip
{\bf Case B}: $\Omega \neq 0$ and $P\neq P_3$.

We have to modify the previous argument for \textbf{Case A}, since the equation \eqref{cond:parallel2} for $\mbu^\parallel$ now contains $u^\perp$ as well.
We again consider the set $S^{conv}_\parallel$ and the corresponding Minkowski functional $N$ on $P$.
We shall derive $q:=\max \{ N(\mbn):\mbn\in S_\perp \} \leq 1$ before proving the claim (1).

Suppose for contradiction that $q>1$.
Then, since $\p [qS^{conv}_\parallel]$ contains at least one point of $S_\perp$, there is a side $E$ of the polygon $S^{conv}_\parallel$ such that $S_\perp \cap [qE] \neq \emptyset$.
Let $\mbn_1,\mbn_2,\dots ,\mbn_p$ be all the points of $S_\parallel$ located on $E$ in this order; hence $\mbn_1,\mbn_p$ are the endpoints of $E$.
Since $P\neq P_3$, we may assume that $\mbn_1\not\in P_3$.
Let $\mbn$ be the point of $S_\perp \cap [qE]$ which is the closest to the endpoint $q\mbn_1$, and we claim that $\mbn=q\mbn_1$.
Suppose this is not the case, then the frequency $\mbn$ interacts with $\mbn_1\in S_\parallel$ to create non-zero vertical component at $\mbn+\mbn_1$.
Since $\mbn+\mbn_1\not\in S$, there must exist a pair $(\mbn',\ti{\mbn}')\in S_\parallel \times S_\perp$ such that 
$(\mbn',\ti{\mbn}')\neq (\mbn_1,\mbn)$ and $\mbn'+\ti{\mbn}'=\mbn_1+\mbn$.
By an argument using the linear functional which is equal to $1$ on $E$, we see $\mbn'\in E$ and $\ti{\mbn}'\in qE$.
It then holds that $\mbn'\in \{ \mbn_2,\dots ,\mbn_p\}$, which however implies that $\ti{\mbn}'$ is either out of the segment $qE$ or closer to $q\mbn_1$ than $\mbn$, contradicting the definition of $\mbn$.
We therefore verify that $q\mbn_1\in S_\perp$.

We next claim $q<2$.
To see this, observe that $\Omega J_{q\mbn_1}\mathbf{u}_{q\mbn_1}$ has non-zero horizontal component (note that $q\mbn_1\not\in P_3$).
Since $q\mbn_1\not\in S_\parallel$, from the equation \eqref{cond:NSC} it must be canceled out by the interaction of two frequencies $\mbn',\mbn''$ in $S_\parallel$.
This implies that $q=N(q\mbn_1)=N(\mbn'+\mbn'')\leq N(\mbn')+N(\mbn'')\leq 2$.
Furthermore, if $q=2$, then the only possible $\mbn',\mbn''\in S_\parallel$ with $\mbn'+\mbn''=2\mbn_1$ is $\mbn'=\mbn''=\mbn_1$ (this can be shown by using a linear functional $f$ with $f(\mbn_1)=1$ and $f<1$ on $S^{conv}_\parallel\setminus \{ \mbn_1\}$), which however do not interact.
Therefore, it must hold that $q<2$.

We can then deduce that $S^{conv}_\parallel$ is inscribed in a circle on $P$ centered at the origin and there is no point of $S_\parallel$ on $\p S^{conv}_\parallel$ other than the vertices.
To show this, it suffices to verify that any vertex $\mbn_0$ of $S^{conv}_\parallel$ and an adjacent point $\mbn_0'\in S_\parallel \cap \p S^{conv}_\parallel$ satisfy $|\mbn_0|=|\mbn_0'|$.
In fact, if $|\mbn_0|\neq |\mbn_0'|$, then non-zero horizontal component would be created at $\mbn_0+\mbn_0'$.
Since $N(\mbn_0+\mbn_0')=2$ and $q<2$ imply $\mbn_0+\mbn_0'\not\in S$, and that there is no other pair of frequencies in $S_\parallel$ creating the mode $\mbn_0+\mbn_0'$, this component could not be canceled out, which contradicts the equation \eqref{cond:NSC}. 

So far, we have shown $1<q<2$, $q\mbn_1\in S_\perp$, and $S_\parallel\cap E=\{ \mbn_1,\mbn_2\}$.
We can now derive a contradiction by an argument similar to the proof of Lemma~\ref{lem:planar1}.
Indeed, the vertical component at $q\mbn_1+\mbn_2\not\in S$ created from $q\mbn_1\in S_\perp$ and $\mbn_2\in S_\parallel$ could be canceled only by the contribution from $(q-1)\mbn_1+\mbn_2$ and $\mbn_1$.
The contribution from $(q-1)\mbn_1+\mbn_2\in S_\perp$ and $\mbn_2\in S_\parallel$ must in turn be canceled with that from $(q-2)\mbn_1+2\mbn_2$ and $\mbn_1$.
However, since $(q-2)\mbn_1+2\mbn_2\not\in qS^{conv}_\parallel$, this is a contradiction.
We therefore confirm that $q\leq 1$.

Now that there is no point of $S$ in $\{ \mbn \in P:N(\mbn)>1\}$, we can prove the claim (1) by following the proof of Proposition~\ref{prop:planar}(i).
We first see that $S_\parallel$ is inscribed in a circle and that $S_\parallel \cap \p S^{conv}_\parallel$ consists of the vertices of $S^{conv}_\parallel$, which is verified by the same argument as for the claim (a) in the proof of Proposition~\ref{prop:planar}(i).
Next, we recall the argument for the claim (b) in the proof of Proposition~\ref{prop:planar}(i) to show that there is no point of $S_\parallel$ in the interior of $S^{conv}_\parallel$, and hence the claim (1).

Moreover, we follow the proof of Lemma~\ref{lem:planar1} (and the first half of the proof of Lemma~\ref{lem:planar2}) to see that $q=1$, $S_\perp \cap \p S^{conv}_\parallel =S_\parallel$ (unless $u^\perp \equiv 0$) and
\eq{relation-1}{u^\parallel_{\mbn_{j+1}}(t)u^\perp _{\mbn_j}(t)=u^\parallel _{\mbn_j}(t)u^\perp _{\mbn_{j+1}}(t),\qquad 0\leq j\leq p-1,\quad t\in I.}
Since each of $u^\parallel_{\mbn_j}(t)$, $u^\perp_{\mbn_j}(t)$ is real analytic and not identically zero on $I$ (unless $u^\perp \equiv 0$), the above relations \eqref{relation-1} are easily extended to 
\[ u^\parallel_{\mbn_j}(t)u^\perp _{\mbn_{j'}}(t)=u^\parallel _{\mbn_{j'}}(t)u^\perp _{\mbn_j}(t),\qquad 0\leq j,j'\leq p-1,\quad t\in I.\]
Hence, we deduce from Lemma~\ref{lem:two} that there is no interaction between any pair $\{ \mbn_j,\mbn_{j'}\}$ of two vertices of $S^{conv}_\parallel$ (not necessarily adjacent).
Using this fact, we can show $S_\perp \cap \{ \mbn \in P:N(\mbn)<1\} =\emptyset$ by a slight modification of the argument for the claim (b) in the proof Proposition~\ref{prop:planar}(i).
This verifies the claim (2).

Finally, we shall establish the claim (3).
We have already seen that there is no interaction between any pair of frequencies in $S$.
Then, all the coefficient vectors evolve linearly:
$\mathbf{u}_{\mbn}(t)=e^{-\nu \lambda ^2(t-t_*)}e^{-\Omega (t-t_*)J_{\mbn}}\mathbf{u}_{\mbn}(t_*)$; or in each component,
\eq{repr-1}{
u_\mbn^\parallel (t) &=e^{-\nu \la^2(t-t_*)}\Big\{ \cos \Big( \Omega (t-t_*)\frac{n^3}{|\mbn|}\Big) u^\parallel_\mbn(t_*) +\sin \Big( \Omega (t-t_*)\frac{n^3}{|\mbn|}\Big) u^\perp_\mbn(t_*) \Big\} ,\\
u_\mbn^\perp (t) &=e^{-\nu \la^2(t-t_*)}\Big\{ - \sin \Big( \Omega (t-t_*)\frac{n^3}{|\mbn|}\Big) u^\parallel_\mbn(t_*) +\cos \Big( \Omega (t-t_*)\frac{n^3}{|\mbn|}\Big) u^\perp_\mbn(t_*) \Big\} 
}
for $t\in I$ and $\mbn\in S$.

Here, we claim that $u^\parallel_\mbn (t)\neq 0$ for all $\mbn \in S$ and $t\in I$.
Suppose this is not the case, say $u^\parallel_{\mbn_j}(\tau )=0$ for some $j$ and $\tau \in I$.
Note that $u^\perp _{\mbn_j}(\tau )\neq 0$, since otherwise we have $\mbu _{\mbn_j}\equiv 0$ on $I$ from \eqref{repr-1}, contradicting that $\mbn_j\in S_\parallel$.
The relation \eqref{relation-1} then implies that $u^\parallel_{\mbn_{j+1}}(\tau )=0$, and similarly, $u^\parallel_{\mbn}(\tau )=0$ and $u^\perp _{\mbn}(\tau )\neq 0$ for all $\mbn\in S$.
From \eqref{repr-1}, we have
\[ u^\parallel _\mbn (t)=e^{-\nu \la^2(t-\tau )}\sin \Big( \Omega (t-\tau )\frac{n^3}{\lambda}\Big) u^\perp_\mbn(\tau ),\qquad u_\mbn^\perp (t) =e^{-\nu \la^2(t-\tau )}\cos \Big( \Omega (t-\tau )\frac{n^3}{\lambda}\Big) u^\perp_\mbn(\tau )\]
for any $\mbn\in S$, $t\in I$.
Substituting them into \eqref{relation-1}, we obtain that
\[ \sin \Big( \Omega (t-\tau )\frac{n_{j+1}^3-n_j^3}{\lambda}\Big) =0,\qquad 0\leq j\leq p-1,\quad t\in I.\]
This is, however, impossible because the assumption $P\neq P_3$ implies $n_{j+1}^3\neq n_j^3$ for some $j$.
 
Now, from \eqref{relation-1} we see that $\gamma (t):=u_\mbn^\perp (t)/u_\mbn^\parallel (t)\in \mathbb{C}$ is independent of $\mbn\in S$ for each $t\in I$.
Moreover, since $\mathbf{u}(t)$ is real-valued, it holds that $\mathbf{u}_{\mbn_{p/2}}(t)=\overline{\mathbf{u}_{\mbn_0}(t)}$; i.e., $(-u^\parallel _{\mbn_{p/2}}(t),u^\perp _{\mbn_{p/2}}(t))=(\overline{u^\parallel _{\mbn_0}(t)},\overline{u^\perp _{\mbn_0}(t)})$.
Therefore, we have
\[ \overline{\gamma (t)}=\overline{\Big( \frac{u^\perp _{\mbn_0}(t)}{u^\parallel _{\mbn_0}(t)}\Big)} =\frac{u^\perp _{\mbn_{p/2}}(t)}{-u^\parallel _{\mbn_{p/2}}(t)}=-\gamma (t),\]
which shows that $\gamma (t)=i\kappa(t)$ with $\kappa (t)\in \R$. 
Substituting the relation $u_\mbn^\perp (t)=i\kappa (t)u_\mbn^\parallel (t)$ into \eqref{repr-1}, we have
\[ -\sin \phi +i\kappa (t_*) \cos \phi = i\kappa (t) \Big\{ \cos \phi +i\kappa (t_*)\sin \phi \Big\} ,\qquad \phi :=\Omega (t-t_*)\frac{n^3}{|\mbn|}\]
for any $t\in I$ and $\mbn \in S$.
If we choose $\mbn\in S\setminus P_3$ and $t\in I$ such that $\phi \not\in \frac{\pi}{2}\mathbb{Z}$, then this implies $1=\kappa (t)\kappa (t_*)$ and $\kappa (t)=\kappa (t_*)$, and therefore $\kappa(t)\in \{ \pm 1\}$.
By the continuity of $\gamma(t)$, it holds either $\kappa (t)\equiv 1$ or $\kappa (t)\equiv -1$ on $I$.
We have thus proved the claim (3) for \textbf{Case B}.

This is the end of the proof of Theorem~\ref{thm:NSC}.
\end{proof}

\vspace{0.5cm}
\noindent
{\bf Acknowledgments.}\ 
Research of NK was partly supported by the JSPS Grant-in-Aid for Young Scientists 16K17626.
Research of TY was partly supported by the JSPS Grants-in-Aid for Scientific
Research  17H02860, 18H01136, 18H01135 and 20H01819.


\bigskip


\begin{thebibliography}{9}


\bibitem{BMN99} 
A. Babin, A. Mahalov and B. Nicolaenko,  
\textit{Global regularity of 3D rotating Navier-Stokes equations for resonant domains}, Indiana Univ. Math. J. \textbf{48} (1999)  1133--1176.

\bibitem{BLIS}
T. Buckmaster, C. De Lellis, P. Isett and L. Sz\'ekelyhidi, Jr.,
\textit{Anomalous dissipation for $1/5$-H\"older Euler flows}, 
Annals of Math., \textbf{182} (2015) 127--172.

\bibitem{DS}
C. De Lellis and L. Sz\'ekelyhidi, Jr., 
\textit{Dissipative continuous Euler ﬂows},  Invent. Math., \textbf{193} (2013) 377–-407.


\bibitem{EHS17} 
T. Elgindi, W. Hu and V. \v{S}ver\'ak, 
\textit{On 2d incompressible Euler equations with partial damping},
Comm. Math. Phys. \textbf{355} (2017) 145--159.

\bibitem{Goto-2008}
S.~Goto,
\textit{A physical mechanism of the energy cascade in homogeneous isotropic
  turbulence},
J. Fluid Mech. \textbf{605} (2008) 355--366.

\bibitem{GSK}
S. Goto, Y. Saito, and G. Kawahara,
\textit{Hierarchy of antiparallel vortex tubes in spatially periodic turbulence at high Reynolds numbers},
Phys. Rev. Fluids \textbf{2} (2017) 064603. 

\bibitem{I}
P. Isett, 
\textit{A proof of Onsager's conjecture}, Ann. of Math.
\textbf{188} (2018) 871–-963. 

\bibitem{JY1}
I.-J. Jeong and T. Yoneda, 
\textit{Enstrophy dissipation and vortex thinning for the incompressible 2D Navier-Stokes equations}, Nonlinearity \textbf{34} (2021) 1837.

\bibitem{JY2}
I.-J. Jeong and T. Yoneda, 
\textit{Vortex stretching and enhanced dissipation for the incompressible 3D Navier-Stokes equations}, to appear in Math. Annal.

\bibitem{JY3}
I.-J. Jeong and T. Yoneda, 
\textit{Quasi-streamwise vortices and enhanced dissipation for the incompressible 3D Navier-Stokes equations}, to appear in Proceedings of AMS.

\bibitem{MP91} 
D. McLaughlin and O. Pironneau, 
\textit{Some notes on periodic Beltrami fields in Cartesian geometry},
J. Math. Phys. \textbf{32} (1991) 797--804.

\bibitem{Motoori-2019}
Y.~Motoori and S.~Goto, 
\textit{Generation mechanism of a hierarchy of vortices in a turbulent
  boundary layer}, 
J. Fluid Mech. \textbf{865} (2019) 1085--1109.

\bibitem{Motoori-2021}
Y.~Motoori and S.~Goto, 
\textit{Hierarchy of coherent structures and real-space energy transfer in
  turbulent channel flow}, 
 J. Fluid Mech. \textbf{911} (2021) A27.


\bibitem{TGOY}
T. Tsuruhashi, S. Goto,  S. Oka and  T. Yoneda, 
\textit{Self-similar hierarchy of coherent tubular vortices in turbulence}, submitted, arXiv:2109.09997 


\bibitem{YGT}
T. Yoneda, S. Goto and T. Tsuruhashi, 
\textit{Mathematical reformulation of the Kolmogorov-Richardson energy cascade in terms of vortex stretching}, submitted, arXiv:2105.12459

\end{thebibliography}
\end{document}